\documentclass[12pt,oneside,reqno]{amsart}
\usepackage[latin1]{inputenc}
\usepackage[english]{babel}
\usepackage{mathrsfs}
\usepackage{indentfirst}
\usepackage{paralist}
\usepackage{amssymb}
\usepackage{amsthm}
\usepackage{amsmath}
\usepackage{amscd}
\usepackage{graphicx}
\usepackage[colorlinks=true]{hyperref}
\usepackage[margin=2.5cm]{geometry}
\usepackage{cite}
\usepackage{lineno} %% adds numbers to the lines
\usepackage{mathptmx}

\usepackage{xcolor}
\definecolor{ForestGreen}{rgb}{0.13, 0.55, 0.13}

\theoremstyle{plain} 
\newtheorem{theorem}{Theorem}[section] 
\newtheorem*{theorem*}{Main Result}
\newtheorem{thm*}{Known result}
\newtheorem{metthm}{Main result}
 
\newtheorem{lemma}[theorem]{Lemma}
\newtheorem{prop}[theorem]{Proposition}
\newtheorem{defn}[theorem]{Definition}

\theoremstyle{definition}

\theoremstyle{remark}
\newtheorem{remark}[theorem]{Remark}

\numberwithin{equation}{section} 

\def\Xint#1{\mathchoice
	{\XXint\displaystyle\textstyle{#1}}%
	{\XXint\textstyle\scriptstyle{#1}}%
	{\XXint\scriptstyle\scriptscriptstyle{#1}}%
	{\XXint\scriptscriptstyle\scriptscriptstyle{#1}}%
	\!\int}
\def\XXint#1#2#3{{\setbox0=\hbox{$#1{#2#3}{\int}$}
		\vcenter{\hbox{$#2#3$}}\kern-.5\wd0}}

\def\dashint{\Xint-}

\newcommand{\eqlab}[1]{\begin{equation}  \begin{aligned}#1 \end{aligned}\end{equation}} %this substitutes \begin{equation}\begin{aligned} ... 
\newcommand{\bgs}[1]{\begin{equation*} \begin{aligned}#1\end{aligned}\end{equation*}} %this substitutes \begin{equation*}\begin{aligned} ...
\newcommand{\sys}[2][]{\begin{equation*}#1  \left\{\begin{aligned}#2\end{aligned}\right.\end{equation*}}%this substitutes \begin{equation*}\begin{aligned} ... with the graph parenthesis 

\newcommand{\R}{\ensuremath{\mathbb{R}}}
\newcommand{\Rn}{\ensuremath{\mathbb{R}^n}}
\newcommand{\N}{\ensuremath{\mathbb{N}}}
\newcommand{\eps}{\ensuremath{\varepsilon}}

\newcommand{\Sg}{\mathcal{S}g}
\newcommand{\Ll}{\mathcal L}
\newcommand{\Co}{\mathcal C}

\newcommand{\Fc}{\mathcal F}
\newcommand{\Ha}{\mathcal H}

\newcommand{\G}{\mathcal G}
\newcommand{\A}{\mathcal A}
\newcommand{\Nl}{\mathcal N}
\newcommand{\Op}{\mathcal O}

\newcommand{\lra}{\longrightarrow}
\newcommand{\B}{\mathfrak B}
\newcommand{\ball}{\mathcal B} %ball in $R^{n+1}$
\newcommand{\W}{\mathcal W}

\DeclareMathOperator{\diam}{diam}
\DeclareMathOperator{\Per}{Per}
\DeclareMathOperator{\Tail}{Tail}
\DeclareMathOperator{\Tails}{Tail_s}

\newcommand{\F}{\mathcal F}
\newcommand{\h}{\mathscr{H}}

\newcommand{\kers}{|x-y|^{n-1+s}}

\newcommand{\dkers}{\frac{dx\,dy}{|x-y|^{n-1+s}}}

\DeclareMathOperator{\dist}{dist}

\makeatletter
\def\l@subsection{\@tocline{2}{0pt}{2.5pc}{5pc}{}}
\makeatother

\renewcommand{\le}{\leqslant}
\renewcommand{\leq}{\leqslant}
\renewcommand{\ge}{\geqslant}
\renewcommand{\geq}{\geqslant}

\begin{document}
	\author{C. Bucur, L. Lombardini}
	\title[Asymptotics as $s\searrow 0$ of the nonlocal nonparametric Plateau problem]{Asymptotics as $s\searrow 0$ of the nonlocal nonparametric Plateau problem with obstacles
	}
	\begin{abstract}
	In this paper, we introduce a functional and a geometric setting for an obstacle problem for nonlocal minimal graphs. In particular we study existence of solutions, a priori estimates, and we prove the equivalence of the two settings. We then observe a striking stickiness phenomena when the fractional parameter is small and the data at infinity is not too large: the nonlocal minimal graphs adhere entirely to the obstacle and leave the remainder of the domain asymptotically empty. We thus provide a class of examples where continuity of nonlocal minimal graphs across the boundary and across the obstacle may fail.
	\end{abstract}
	
	\maketitle
	\setcounter{tocdepth}{1}
	%\tableofcontents

	%%%%%SECTION%%%% 
	\section{Introduction}
	
	Nonlocal minimal surfaces were first introduced in \cite{nms} and in recent decades much effort has been put into analyzing their analytical and geometric properties,  as well as comparing them to their classical local counterparts, namely minimal surfaces of co-dimension one in the euclidean space, via  a De Giorgi approach, see \cite{DG5}. One behavior of nonlocal minimal surfaces that stood out, first noticed in \cite{boundary} in the two dimensional case, was named  \emph{stickiness to the boundary} and refers to the fact that
	nonlocal minimal surfaces have a tendency to attach to the boundary of the domain in which the minimization problem is set. Following a lead from \cite{boundary}, a general example of \emph{complete stickiness} in a bounded domain was studied in \cite{BucurLombardini} when the fractional parameter $s$ is small enough. 
	
	Nonparametric nonlocal minimal surfaces or, otherwise said, \emph{nonlocal minimal graphs}, are nonlocal minimal surfaces which are graphs in some direction. 
	
	In the  present paper we extend the observations in \cite{BucurLombardini} to the case of nonlocal minimal graphs, extending the complete stickiness behavior to minimizing sets in unbounded cylinders.  The stickiness phenomena are also important in view of regularity results, pointing out that continuity across the boundary may fail. Stickiness seems to be typical for nonlocal minimal sets as observed in \cite{DSVnmsstick20,DSVbdry20,trace}. Precisely, it is showed in \cite{DSVnmsstick20}  that either nonlocal minimal graphs are continuous across the boundary, or a small perturbation of the exterior data can produce stickiness. We also point out that in the recent paper \cite{trace} the regularity of the trace is studied: when $\Omega$ is smooth, locally around points of stickiness, i.e. where continuity across the boundary fails, the trace is a $C^{1,\gamma}$ (for some $\gamma>0$) $(n-1)$-dimensional surface in $\R^{n+1}$.
	
	The main goal of the paper is to give a complete stickiness result for nonlocal minimal graphs for small values of the fractional parameter. In other terms, it provides an example of a minimal graph which fails to be continuous across the boundary. To achieve this goal, we introduce the suitable functional and equivalent geometric framework for a nonparametric Plateau problem with obstacles. 
	
	\smallskip

	We recall that in \cite{nms} a \emph{nonlocal minimal surface} is defined as the boundary of a \emph{nonlocal minimal set}, a set that minimizes the fractional perimeter functional.  Given $\mathcal O\subset \R^{n+1}$ a bounded open set and $E_0\subset \R^{n+1}$,
	the set $E\subset \R^{n+1}$ such that $E\cap \Co \mathcal O=E_0$ is an $s$-minimal set  in $\mathcal O$ with respect to $E_0$ if $\Per_s(E, \mathcal O)$ is finite and if
	\eqlab{ \label{minper} \Per_s(E, \mathcal O) \leq \Per_s(F, \mathcal O) \qquad \mbox{ for all } F\subset \R^{n+1} \mbox{ such that } \, \,  F\cap \Co \mathcal O=E_0\cap \Co \mathcal O,}
	with $\Per_s$ defined in \eqref{perimeter}. 
	We point out that in our setting the definition of an $s$-minimal set needs some adaptation, since the domain in which we minimize the perimeter is unbounded (the infinite cylinder), so we must work with local minimizers (see, for reference, \cite[Section 4.4]{LucaApprox}). 
	
	The study of minimal sets in the class of subgraphs is justified by \cite[Theorem 1.1]{graph} which states that nonlocal minimal surfaces that have a continuous subgraph as fixed exterior data are continuous subgraphs inside the smooth domain of reference. Furthermore, \cite[Theorem 1.9]{CozziLombardini} gives yet another motivation for working in the class of subgraphs: if the exterior data $E_0\subset \Co \mathcal O$ in \eqref{minper} is a subgraph then the perimeter  of a set decreases if such set is replaced by a subgraph (built with a ``vertical rearrangement '' of the set itself). 
	
	We consider thus the following problem. Given $\Omega\subset \Rn$ a bounded open set with smooth  boundary, an open set $A\subset \Omega$, the exterior data given by a function $\varphi \colon \Rn \to \R$ and the obstacle given by $\psi \colon A \to \R$, we want to find a function $u\colon \Rn \to \R$ such that 
	\eqlab{ \label{mainpb1}
		\mbox{ (i) }&\; u(x)=\varphi(x) \quad \mbox{ in }  \Co \Omega
		\\
		\mbox{ (ii) } &\; u(x) \geq \psi(x) \quad  \mbox{ in } A\subset \Omega,
		\\
		\mbox{ (iii) }&\; \mbox{ the subgraph of $u$ is an $s$ minimal set for the fractional perimeter in $\Omega\times \R$, among all  }\\ &\; \mbox{subgraphs of functions verifying (i) and (ii) }.} 
	We adopt this \emph{geometric point of view} in Subsection \ref{setting}.

	We mention that the geometric nonlocal obstacle problem, in which both the obstacle and the exterior data are sets and the domain of minimization is bounded has been considered in \cite{CDSS16}. 
	
	\medskip 
	
	The problem \eqref{mainpb1} can be defined also from a purely functional perspective. Indeed, in \cite{CozziLombardini}, a functional setting for  nonlocal minimal graphs was introduced. In Subsection \ref{functsett}, we adopt this functional approach in presence of obstacles, dealing with the problem \eqref{mainpb1} in which we  rewrite  (iii). We look for a function $u\colon \Rn \to \R$ such that
	\eqlab{ \label{mainpb2}
		\mbox{ (i) }&\; u(x)=\varphi(x) \quad \mbox{ in }  \Co \Omega
		\\
		\mbox{ (ii) } &\; u(x) \geq \psi(x) \quad  \mbox{ in } A\subset \Omega,
		\\
		\mbox{ (iii') }&\; \F_s(u, \Omega) \leq \F_s(v, \Omega) \, \, \mbox{ for all competitors } v ,}
	with $\F_s$ subsequently defined in \eqref{area}. 
	Notice that an admissible competitor satisfies (i) and (ii). 
	
	The local version of this problem has been studied in several papers, we mention for instance \cite{Giusti}. %dobbiamo citare altri? 
	
	Concerning our nonlocal obstacle problem, we first obtain some preliminary results on problems \eqref{mainpb1}, \eqref{mainpb2}. We give here a short list. 
	\begin{itemize}
		\item Under reasonable assumptions on $\varphi$ and $\psi$, problem \eqref{mainpb2}  admits a unique solution $u_s$, see Theorems \ref{Dirichlet_obstacle}  and \ref{Dirichlet_obstacle1}. 
		\item Some a-priori $L^\infty(\Omega)$ bounds on the solution as well as $L^\infty_{\textup{loc}}(\Omega)$ and $W^{s,1}(\Omega)$  hold (see, respectively, Theorems \ref{Dirichlet_obstacle}, \ref{Dirichlet_obstacle1}  and \ref{inftyloc}). 
		\item If $u$ solves \eqref{mainpb2}, then it solves also the geometric problem \eqref{mainpb1}, and even more generally the subgraph of $u$ turns out to be a minimizer among all sets and not just among subgraphs, see Theorems \ref{equiv}, \ref{equiv1}.
		\item An Euler-Lagrange equation can be put into evidence, see  Theorem \ref{eulero}.
	\end{itemize}
	
	Once we establish this setting for the obstacle problem, it is interesting to study the asymptotics as $s\to 0$ of the sequence of solutions $u_s$.
	
	\medskip
	
	Concerning these asymptotics, the first paper that analyzed the behavior of the fractional perimeter for small values of $s$ is \cite{asympt1}. As $s$ becomes smaller, that data far away from $\Omega$ becomes predominant. In order to mathematically encode such a behavior "at infinity" of a set $E\subset \R^{n+1}$ for small values of $s$, in~ \cite[(2.2)]{asympt1} the authors define the function set $\alpha$ in the following way
	\bgs{\label{alpha} \alpha(E)=\lim_{s\to 0^+} s\int_{\Co \ball_1} \frac{\chi_E(Y)}{|Y|^{n+1+s}}\, dY.  } 
	Here  $\ball_1$ is the ball in $\R^{n+1}$ of radius one, centered at the origin.\\
	We notice that the set function $\alpha$ may not exist even for smooth sets $E$ (Examples 2.8 and 2.9 in \cite{asympt1}). For this reason, we use the set functions introduced in \cite{BucurLombardini},
	\eqlab{\label {baralpha1} \overline \alpha (E):= \limsup_{s\to 0^+} s\int_{\Co \ball_1} \frac{\chi_E(Y)}{|Y|^{n+1+s}}\, dY ,}
	and $ \underline \alpha(E) $ as the corresponding $\liminf$.
	%:= \liminf_{s\to 0^+} s\int_{\Co \ball_1} \frac{\chi_E(Y)}{|Y|^{n+1+s}}\, dY,}  
	The limit as $s\to 0$ of the renormalized fractional perimeter of a set, see  \cite[Theorem 2.5]{asympt1},  is equal to a linear combination of the weighted measure at infinity (the function set $\alpha$) and of the Lebesgue measure of $E\cap\mathcal O$, i.e.
	\bgs{  \lim_{s\to 0^+} s\Per_s(E,\mathcal O)= &\; \alpha( E) |\mathcal O| +(\omega_{n+1}-2 \alpha(E)) | E \cap \mathcal O| 	,}
	whenever $\mathcal O \subset \R^{n+1} $ is  a bounded open set with smooth boundary, $E \subset \R^{n+1}$ has  finite $s_0$-perimeter, for some $s_0\in (0,1)$ and $\alpha(E)$ exists. 
	One could guess from here that if 
	\[ 2\alpha(E)< \omega_{n+1}\]
	then minimizing the $s$-perimeter would mean   to minimize the Lebesgue measure of $E\cap \mathcal O$, which would give $E\cap \mathcal O= \emptyset$. As a matter of fact, this intuition can be rigorously pinned down. 
	The result in \cite[Theorem 1.7]{BucurLombardini} says that if $\mathcal O$ is a bounded open set with $C^2$ boundary, the exterior data
	$E_0:=E\cap \Co \mathcal O$ is such that  \eqlab{ \label{alphacond} \overline \alpha(E_0)<\frac{\omega_{n+1}}{2},} and it does not completely surround $\mathcal O$, i.e. there exists $R>0$ and $X_0\in \partial \mathcal O$ such that 		
	%\eqlab{\label{condplus}
	$\ball_R(X_0)\setminus \Omega \subset \Co E_0,$
	%}
	then there is some $\bar s\in (0,1)$ small enough such that for all $s<\bar s$, if $E$ is an $s$-minimal set in $\mathcal O$ with exterior data $E_0$, then 
	\[  E\cap \mathcal O=\emptyset .\]
	If we allow the possibility that the exterior data $E_0$ completely surrounds $\mathcal O$, then the minimal sets become either empty, or arbitrarily ``dense'' (in a topological sense) inside $\mathcal O$. This more general result is stated in \cite[Theorem 1.4]{BucurLombardini}.
	
	Our main result in this regard, contained in Section \ref{two}, shows a stickiness behavior of solutions to our problem \eqref{mainpb1}/\eqref{mainpb2}. 
	\begin{metthm} Let $\varphi \colon \Rn \to \R$ such that
		\[
		\overline{\alpha}\big(\{(x,x_{n+1})\,|\,x_{n+1}<\varphi(x)\}\big)<\frac{\omega_{n+1}}{2},
		\]
		let $\psi\in C(\overline{A})$ be regular enough and let $\eps\in[0,1]$. For every $s\in(0,1)$, let $u_s$ be the solution of the obstacle problem  \eqref{mainpb2}, with respect to the obstacle $\eps\psi$. Then:
		\begin{itemize}
			\item if $A=\Omega$, there exists $s_0\in(0,1)$, not depending on $\eps$, such that
			\[
			u_s=\eps\psi\quad\mbox{a.e. in }\Omega,
			\]
			for every $s\in(0,s_0)$;
			\item if $A\subset\subset\Omega$, for every $k\geq 0$ big enough, there exists $s_k\in(0,1)$, not depending on $\eps$, such that
			\bgs{
				u_s\le-k\quad\mbox{ a.e. in }\Omega\setminus A\quad\mbox{ and }
				\quad u_s=\eps\psi\quad\mbox{ a.e. in }A,
			}
			for every $s\in(0,s_k)$.
			In particular
			$$\lim_{s\to0}u_s(x)=-\infty,\quad\mbox{ uniformly in }x\in\Omega\setminus A.$$
		\end{itemize}
	\end{metthm}

	To be precise, taking $\Omega$  a bounded and connected open set with smooth boundary,  $A\subset\subset \Omega$ open with smooth boundary, $\varphi$ locally bounded with \eqref{alphacond} holding for the set $E_0$ equal to the subgraph of $\varphi$ in $\Co \Omega$, and $\psi$ smooth on $\overline A$, then for all $k$ large enough there is  $s_k\in (0,1)$ such that 
	\[ u \leq -k \quad  \mbox{ almost everywhere in } \, \Omega \setminus A , \qquad u_s=\psi \quad \mbox{ in } A, \quad \mbox{ for all } s\leq s_k.\] Furthermore, if $A=\Omega$, then the solutions "sticks" to the obstacle, meaning that $u_s=\psi$ in $\Omega$.
	The stickiness is obvious at this point, since we can take $k $ larger than both the $L^\infty$ norm of $\varphi$ in some small neighborhood of $\Omega$, and the $L^\infty$ norm of $\psi$ in $A$. 
	\\ 
	The precise statements can be found as follows: the first item is contained in Theorem \ref{all}, the second statement in Theorem \ref{asympt_obst}. We add the factor $\varepsilon$ to underline the fact that this behavior is still apparent even for small obstacles, and we comment this in Remark \ref{rmkwitheps}.
	
	\medskip
	It is quite interesting to observe that the main result implies that the $s$-minimal function $u_s$ is not continuous across the boundary of the domain $\Omega$, nor across the domain of the obstacle. Furthermore, notice that this result does not depend on the regularity of the obstacle nor on the exterior data, nor on the geometry of the domain. In fact, by \cite[Theorem 1.1]{cabrecozzi} one obtains $u_s\in C^\infty(\Omega\setminus \overline A)$. On the other hand, even if one considers smooth data, i.e. $\varphi\in C^\infty(\Co \Omega)$ and $\psi\in C^\infty(\overline A)$, then by our main result, for $s$ small enough, the function $u_s$ cannot be globally continuous. Moreover, even without obstacles ($A=\emptyset$), the continuity of $u_s$ fails across $\partial \Omega$, independently of the geometry of the domain and of the regularity of the prescribed exterior data. This behavior is already known for the particular case in which the exterior data is a small perturbation of the plane (see \cite[Theorem 1.4]{boundary}). This phenomenon is purely nonlocal and it is strikingly different from the classical case (see the boundary regularity result \cite[15.9 Theorem]{Giusti}).

	\medskip
	
	In the rest of the paper, we do the following:  in Section \ref{one} we discuss the Plateau problem from a functional point of view. In Section \ref{setting} we discuss the geometric point of view and the equivalence with the functional setting. Section \ref{two} is dedicated to the stickiness and asymptotic properties as $s\to 0$. %The additional Section \ref{three} deals with the continuity of the obstacle problem in the fractional parameter. 
	We conclude the paper with an Appendix %\ref{dens} we prove a density result. Appendix 
	\ref{appendicite} in which we give an additional strategy for the proof of our stickiness result. 
	
	\medskip
	
	From here onward, the following notations will be used:
	\[X=(x,x_{n+1})\in \R^{n+1},\,  x\in \Rn,\]
	\[ \mathcal B_R(X)= \big\{ Y\in \R^{n+1} \; \big| \; |Y-X|<R\big\},\]
	\[ B_R(x)=\big\{ y\in \R^{n} \; \big| \; |y-x|<R\big\},\]
	\[ \omega_{n+1}=\Ha^n(\partial \mathcal B_1) = \frac{2\pi^{\frac{n+1}2}}{\Gamma \left(\frac{n+1}2\right)},\]
	\[\Omega^k:= \Omega\times (-k,k),\qquad \Omega^\infty:= \Omega\times\R.\]
	For any $\mathcal A \subset \Rn$, 
	we denote the subgraph of a function
	$u\colon \mathcal A \to \R$, , as 
	\bgs{ \Sg (u, \mathcal A) =\left\{ X= (x, x_{n+1})\in \mathcal  A\times \R \, \big |\, x_{n+1}< u(x)\right\},} and we write for simplicity 
	\bgs{ \Sg (u) :=\Sg(u, \Rn).} 
	%\left\{ X= (x, x_{n+1})\in \R^{n+1}\, \big |\, x_{n+1}< u(x)\right\},}
	\noindent Given an open set $\Omega\subset\R^n$ and $\delta\in\R$, the  signed distance function from $\partial\Omega$ is given by
	\[ \bar{d}_\Omega= d(x,\Omega)-d(x,\Co \Omega)\quad\mbox{for every }x\in\R^n, \qquad \mbox{ where } d(x,\Omega)=\inf_{y\in \Omega}|x-y|  \] 
	-- notice that the signed distance is negative inside $\Omega$. We also denote 
	\eqlab{ \label{neigh}\Omega_\delta:=\{x\in\R^n\,|\,\bar{d}_\Omega(x)<\delta\}.}
	\noindent For a bounded set with $C^2$ boundary, the signed distance is of class $C^2$ in a neighborhood of $\partial\Omega$ (see e.g. \cite{GilTru,Ambrosio}). More precisely, there exists $r_0>0$
	such that
	\[\bar{d}_\Omega\in C^2(N_{2r_0}(\partial\Omega)),\quad
	\mbox{ where }\quad N_{2r_0}(\partial\Omega):=\{x\in\R^n\,|\,\;|\bar{d}_\Omega(x)|<2r_0\}.\]
	As a consequence, since $|\nabla\bar{d}_\Omega|=1$,
	the open set $\Omega_\delta$ has $C^2$ boundary for every $|\delta|<2r_0$.
	For a more detailed discussion, see \cite{Doktor} or Appendix A.2 in \cite{BucurLombardini} and other references therein.
	
	We will also adopt in the paper the following notations for the measure theoretic interior and exterior of a measurable set $E\subset \R^{n+1}$:
	\eqlab{ \label{measth} 
		&\; E_{int}:=\{ x\in \R^{n+1} \, | \, |E\cap B_r(x)| =|B_r(x)|\mbox{ for some } r>0  \}, \\ &\; E_{ext} := \{ x\in \R^{n+1} \, | \, |E\cap B_r(x)| =0  \mbox{ for some } r>0  \}.}  When there is no risk of confusion, we will still use $\partial E, \overline E$ for the boundary, respectively the closure of the set a in a measure theoretic sense. 
	
	%%%%%%%%%%SECTION%%%%%%%%%%%%%

	\section{Functional setting for the Plateau problem with obstacles} \label{one}
	
	% \noindent \textbf{Nonlocal minimal graphs.}\\

	The purpose of this section is to 
	adapt the functional setting for the Plateau problem,  introduced in \cite{CozziLombardini} to the study of the \emph{obstacle problem}. To that end, we recall the necessary notions,  without aiming at full generality, neither in the statements
	nor in the proofs of our results.
	In particular, we consider only bounded obstacles and we
	prove the existence of a solution only  for exterior data which is
	bounded in a big enough neighborhood of the domain~$\Omega$.
	Furthermore, we will not investigate the regularity properties
	of such a solution or that of the free boundary. 
	
	%We consider the
	%Plateau problem with (eventually discontinuous) obstacles.
	%Namely, besides imposing the exterior data condition
	%$$u=\varphi\quad\mbox{a.e. in }\Co\Omega,$$
	%we constrain the functions to lie above a fixed function which acts as an obstacle, that is
	%$$u\ge\psi\quad\mbox{a.e. in }A,$$
	%where~$A\subset\Omega$ is a fixed open set.
	
	On these grounds, for every $t\in \R$ and $s\in (0,1)$, let
	\begin{align}
	\label{gdef}
	g_s(t) & := \frac{1}{(1+t^2)^\frac{n+1+s}{2}}, \\
	\label{Gdef}
	G_s(t) & := \int_0^tg_s(\tau)\,d\tau, \\
	\label{GGdef}
	\G_s(t) & := \int_0^tG_s(\tau)\,d\tau=\int_0^t\Big(\int_0^\tau g_s(r)\,dr\Big)d\tau.
	\end{align}

	Let $\Omega\subset\Rn$ be a bounded open set with Lipschitz boundary. For every $u:\Rn\to\R$, we define
	\eqlab{\label{Adef}
		\A_s(u,\Omega):=\int_\Omega\int_\Omega\G_s\Big(\frac{u(x)-u(y)}{|x-y|}\Big)\dkers,
	}
	\bgs{
		\Nl_s(u,\Omega):=2\int_\Omega\int_{\Co\Omega}\G_s\Big(\frac{u(x)-u(y)}{|x-y|}\Big)\frac{dx\,dy}{\kers},
	}
	and 
	\eqlab{\label{area}
		\Fc_s(u,\Omega):=\A_s(u,\Omega)+\Nl_s(u,\Omega)=\iint_{Q(\Omega)}\G_s\Big(\frac{u(x)-u(y)}{|x-y|}\Big)\frac{dx\,dy}{\kers},
	}
	where
	\bgs{
		Q(\Omega):=\R^{2n}\setminus(\Co\Omega)^2.
	}
	
	We point out that that function $\mathcal G_s$ is strictly convex, and there exist $\lambda, \Lambda>0$ such that 
	\eqlab{\label{lips}
		| \G_s(t)-\G_s(\tau) |\leq \Lambda |t-\tau|
	}
	(i.e. $\mathcal G_s$ is Lipschitz)
	and furthermore 
	\[ \Lambda |t| -\lambda \leq \mathcal G_s(t) \leq \Lambda |t|.\]
	
	We also denote
	\eqlab{ \label{Gbardef}
		\overline{G}_s(t):=\int_{-\infty}^tg_s(\tau)\,d\tau=\int_{-t}^{+\infty}g_s(\tau)\,d\tau.
	}
	Given $M\ge0$ %, we denote \[ \Omega^M=\Omega \times (-M,M) \qquad \Omega^{\infty} =\Omega \times \R.\]
	we define for every $u:\Rn\lra\R$
	\eqlab{ \label{NMdef}
		\Nl_s^M(u,\Omega):=\int_\Omega dx\bigg[\int_{\Co\Omega}dy\bigg(\int_{\frac{-M-u(y)}{|x-y|}}^\frac{u(x)-u(y)}{|x-y|}
		\overline{G}_s(t)\,dt+\int^\frac{M-u(y)}{|x-y|}_\frac{u(x)-u(y)}{|x-y|}
		\overline{G}_s(-t)\,dt\bigg)\frac{1}{\kers}\bigg]
	}
	and
	\eqlab{ \label{FMdef}
		\Fc^M_s(u,\Omega):=\A_s(u,\Omega)+\Nl_s^M(u,\Omega).
	}

	We introduce the functional spaces for  the problem \eqref{mainpb2}. We define
	\bgs{
		\W^s(\Omega):=&\left\{u:\Rn\lra\R\;\big|\;u|_\Omega\in W^{s,1}(\Omega)\right\},
	} 
	where
	\[ \|u\|_{W^{s,1}(\Omega)} = \|u\|_{L^1(\Omega)} + [u]_{W^{s,1}(\Omega)	},\]
	with
	\[ [u]_{W^{s,1}(\Omega)}=\int_{\Omega} \int_{\Omega} \frac{ |u(x)-u(y)|}{|x-y|^{n+s}} \, dx dy.	\]  
	Furthermore, we consider
	\bgs{
		\B\W^s(\Omega):=&\; \W^s(\Omega)\cap L^\infty(\Omega), 
		\\ 
		\B_M\W^s(\Omega):=&\left\{u\in\B\W^s(\Omega)\;\big|\;\|u\|_{L^\infty(\Omega)}\le M\right\},
	}
	for $M\ge0$. For $\varphi:\Rn \to\R$ , we denote
	\bgs{ 
		\W_\varphi^s(\Omega):=&\left\{u\in \W^s(\Omega) \;\big|\;u=\varphi \mbox{ on } \Co \Omega\right\},
		\\
		\B\W_\varphi^s(\Omega):= & \; \W_\varphi^s(\Omega) \cap L^\infty(\Omega),
		\\
		\B_M\W_\varphi^s(\Omega) :=  &\left\{u\in\B\W_\varphi^s(\Omega)\;\big|\;\|u\|_{L^\infty(\Omega)}\le M\right\}.
	}
	Let~$A\subset\Omega$ be an open set, $\psi\in L^\infty(A)$ be an obstacle function, and let $M\ge\|\psi\|_{L^\infty(A)}$. 
	We then introduce 
	\bgs{
		% \K^s(\Omega,\varphi,A,\psi)
		\W^s_{\varphi,\psi}(\Omega, A)
		:=&
		\{u\in\W^s_\varphi(\Omega)\,:\,u\ge\psi\mbox{ a.e. in }A\}
		,\\
		%\B\K^s(\Omega,\varphi,A,\psi):=
		\B\W^s_{\varphi,\psi}(\Omega, A)
		:=&
		\{u\in\B\W^s_\varphi(\Omega)\,:\,u\ge\psi\mbox{ a.e. in }A\},\\
		%\B_M\K^s(\Omega,\varphi,A,\psi)
		\B_M \W^s_{\varphi,\psi}(\Omega, A)
		:= &%\K^s(\Omega,\varphi,A,\psi)
		\;  \W^s_{\varphi,\psi}(\Omega, A)\cap\B_M\W^s(\Omega).
	}
	%}
	\begin{defn}\label{sol1}
		We say that a function~$u\colon \Rn \to \R$ solves the obstacle problem if~$u$ minimizes~$\F_s$
		in~$\W^s_{\varphi,\psi}(\Omega, A)$, i.e. if
		\eqlab{\label{minarea}
			\iint_{Q(\Omega)} \left\{ \G_s \left( \frac{u(x) - u(y)}{|x - y|} \right) - \G_s \left( \frac{v(x) - v(y)}{|x - y|} \right) \right\} \frac{dx\,dy}{|x - y|^{n - 1 + s}} \le 0,
		}
		for every~$v\in\W^s_{\varphi,\psi}(\Omega, A)$. 
	\end{defn}

	\begin{remark}\label{trunc} 
		We remark that this definition is well posed, thanks
		to \cite[Lemma 2.8]{CozziLombardini}.  \\
		Since $\psi \in L^\infty(A)$ given $u\in \W^s_{\varphi,\psi}(\Omega, A)$, for any $k\geq ||\psi\|_{L^\infty(A)}$ we can consider the truncations 
		\sys[u_k(x)=]{ &\varphi(x), &&\mbox{ for } \; x \in \Co \Omega
			\\
			& u(x), &&\mbox{ for } \; x \in \Omega, -k \leq u(x)\leq k 
			\\
			&-k, &&\mbox{ for } \; x \in \Omega, -k \geq u(x) 
			\\
			& k, &&\mbox{ for } \; x \in \Omega,  u(x)\geq k ,}
		and we have that 
		$u_k \in \B \W^s_{\varphi, \psi}(\Omega, A)$. Moreover 
		\[ \lim_{k\to +\infty} \|u_k -u\|_{W^{s,1}(\Omega)} =0.\]
		As a consequence, as observed in \cite[Lemma 2.8]{CozziLombardini} for all $M\geq 0$ it holds that
		\[ \lim_{k\to +\infty} \F^M_s(u_k, \Omega) = \F^M_s(u, \Omega).\]
		
		We point out also that \cite[Lemma 2.8]{CozziLombardini} gives an alternative definition for a solution of the obstacle problem: for some $M \geq 0$, $u$ solves the obstacle problem if and only if 
		\[ \F^M(u,\Omega) \leq  \F^M(v,\Omega) ,\]
		for all $v\in \W^s_{\varphi, \psi}(\Omega, A)$.
	\end{remark} 
	
	We define furthermore the ``tail'' of a function $u\colon \Rn \to \R$, restricted to a set $O \subset \Co \Omega$ at a point $x \in \Omega$ as
	\[ 
	\Tails(u, O, x)= \int_{ O} \frac{ |u(y)|}{|x-y|^{n+s}} \, dy.
	\]

	\subsection{Existence results} \label{functsett} 
	
	A first main result of this section is the following existence and uniqueness theorem.

	\begin{theorem}\label{Dirichlet_obstacle}
		Let~$n\ge1$,~$s\in(0,1)$,~$\Omega\subset\Rn$ a bounded open set with Lipschitz boundary. Let~$A\subset\Omega$ be an open set and~$\psi\in L^\infty(A)$.
		Then, there exists a constant $\Theta:=\Theta(n,s)>1$ such that for every $\varphi \colon \R^n \to \R$ with~$\varphi\in L^\infty(\Omega_{\Theta\diam(\Omega)  }\setminus\Omega)$, there exists a
		unique function $u\in\W^s_{\varphi,\psi}(\Omega, A)$ that solves the obstacle problem.
		Moreover
		\eqlab{\label{jbond}
			\|u\|_{L^\infty(\Omega)}\le \diam \Omega+\max\big \{ 
			\|\varphi\|_{L^\infty(\Omega_{\Theta\diam(\Omega)}\setminus\Omega)  },\|\psi\|_{L^\infty(A)}\big\}.
		}
	\end{theorem}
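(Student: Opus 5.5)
The plan follows the strategy of \cite{CozziLombardini} for the Dirichlet problem without obstacles: first solve the problem within a class of uniformly bounded functions, and then remove the bound by truncation. Fix
\[
M_0:=\max\big\{\|\varphi\|_{L^\infty(\Omega_{\Theta\diam(\Omega)}\setminus\Omega)},\|\psi\|_{L^\infty(A)}\big\},\qquad M:=\diam\Omega+M_0 .
\]

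\emph{Step 1: existence in $\B_M\W^s_{\varphi,\psi}(\Omega,A)$.}
\begin{itemize}
\item The class is nonempty: the function equal to $\varphi$ outside $\Omega$ and to $M_0$ in $\Omega$ belongs to it.
\item By Remark \ref{trunc}, minimizing $\F_s$ is equivalent to minimizing $\F^M_s$. Since $\G_s(t)\ge\Lambda|t|-\lambda$, we have $\A_s(u,\Omega)\gtrsim [u]_{W^{s,1}(\Omega)}-C$. For $u$ bounded by $M$ in $\Omega$, $\Nl^M_s$ is finite and bounded below.
\item A minimizing sequence is therefore bounded in $W^{s,1}(\Omega)$ and in $L^\infty(\Omega)$. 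By compactness of $W^{s,1}(\Omega)\hookrightarrow L^1(\Omega)$ (Lipschitz boundary), a subsequence converges in $L^1$ and a.e.
\item The limit inherits the constraints $u\ge\psi$ a.e.\ in $A$ and $|u|\le M$, since both are preserved under a.e.\ convergence.
\item Fatou's lemma, using the lower bound on $\G_s$ and the lower semicontinuity of the $\Nl^M_s$ integrand (it is continuous in $u(x)$), gives lower semicontinuity of $\F^M_s$. So a minimizer $u$ exists in the bounded class.
\end{itemize}

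\emph{Step 2: the a priori bound \eqref{jbond}, and removal of the restriction.} This is the main point, and the step where I expect the real difficulty. I would run a comparison/truncation argument from above and below.

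\begin{itemize}
\item \textbf{Upper bound.} Compare $u$ with $v:=\min\{u,\,M_0+\diam\Omega\}$ in $\Omega$ (and $v=\varphi$ outside). The function $v$ is still admissible because $M_0\ge\psi$.
\begin{itemize}
\item Interactions inside $\Omega\times\Omega$ do not increase: $\G_s$ is convex and even, and truncation is $1$-Lipschitz, so $|v(x)-v(y)|\le|u(x)-u(y)|$.
\item For the nonlocal part, take $x$ in $\{u>M_0+\diam\Omega\}$. For $y\in\Omega_{\Theta\diam\Omega}\setminus\Omega$, truncation reduces $|u(x)-\varphi(y)|$.
\item For far-away $y$, lowering $u(x)$ may increase the interaction. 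Here $\Theta$ must be chosen so that the gain from nearby $y$ dominates the loss from far $y$.
\item The idea is to use the derivative $G_s$ and the fact that $|G_s|\le \int_0^\infty g_s$ is bounded. The far contribution is controlled by $\int_{\Co B_{\Theta\diam\Omega}(x)}|x-y|^{-n-s}\,dy\sim(\Theta\diam\Omega)^{-s}/s$.
\item The near contribution: for $y$ with $|x-y|\lesssim\diam\Omega$ one has $(u(x)-\varphi(y))/|x-y|\ge 1$, which forces $G_s\ge G_s(1)>0$ on a set of $y$ of measure comparable to $(\diam\Omega)^n$ (this is where the $\diam\Omega$ term in $M$ enters). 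This gives a positive near contribution $\gtrsim(\diam\Omega)^{-s}$.
\item Choosing $\Theta=\Theta(n,s)$ large makes the near gain exceed the far loss. Hence $\F_s(v)<\F_s(u)$ unless $\{u>M\}$ is null.
\item Technically, I would implement this as a first-variation/monotonicity computation along $u_\tau=\min\{u,\tau\}$ for $\tau\ge M$, showing that $\tau\mapsto\F_s(u_\tau)$ is nondecreasing. This is the delicate estimate.
\end{itemize}
\item \textbf{Lower bound.} Symmetrically, consider $\max\{u,-M\}$. Raising $u$ preserves the obstacle constraint, so this is easier, and the same kernel estimate applies.
\item \textbf{Removal of the restriction.} The same truncation argument applies to arbitrary competitors $w\in\W^s_{\varphi,\psi}(\Omega,A)$: truncating $w$ at levels $\pm M$ does not increase $\F_s$. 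Combined with the density of truncations in Remark \ref{trunc}, the bounded minimizer from Step 1 minimizes over all of $\W^s_{\varphi,\psi}(\Omega,A)$, and \eqref{jbond} holds.
\end{itemize}

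\emph{Step 3: uniqueness.}
\begin{itemize}
\item The admissible class is convex. If $u_1,u_2$ are two minimizers, then $(u_1+u_2)/2$ is admissible.
\item By strict convexity of $\G_s$, if $u_1-u_2$ is not a.e.\ constant on $Q(\Omega)$-pairs, then $\F_s\big((u_1+u_2)/2\big)<\F_s(u_1)$ (the functional is finite by Step 2 and the bound $\G_s\le\Lambda|t|$).
\item Strict convexity therefore forces $u_1(x)-u_2(x)=u_1(y)-u_2(y)$ for a.e.\ $(x,y)\in\Omega\times\Co\Omega$. Since $u_1=u_2$ outside $\Omega$, this gives $u_1=u_2$ a.e.
\end{itemize}
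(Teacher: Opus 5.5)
Your proposal is correct in substance, but it removes the $L^\infty$ constraint differently from the paper. The paper minimizes $\F^M_s$ in $\B_M\W^s_{\varphi,\psi}(\Omega,A)$ for \emph{every} $M$. It combines uniqueness of each $M$-minimizer with the truncation inequality \cite[Proposition 3.5]{CozziLombardini} to get $\|u_M\|_{L^\infty(\Omega)}\le N_0$ for all $M\ge N_0$. It then fixes $M_0=N_0+1$, so the minimizer lies strictly inside the constraint, and uses convex combinations $\lambda v+(1-\lambda)u$ to obtain minimality among all bounded competitors, finishing with Remark \ref{trunc}.

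You instead fix $M=N_0$ once and apply the truncation inequality to the \emph{competitors}. For any $w$, the two-sided truncation $T_Mw$ lies in $\B_M\W^s_{\varphi,\psi}(\Omega,A)$ and has no larger energy, so the constrained minimizer already beats $w$. This bypasses the stabilization in $M$, the uniqueness of $M$-minimizers and the convex-combination step. The cost is that you need the truncation inequality for unbounded $w$. That reduces to bounded $w$ via Remark \ref{trunc}, since $T_M w_k=T_M w$ for $k\ge M$. Your near/far splitting, with $G_s(1)$ on $B_{\diam\Omega}(x)\setminus\Omega$ against $\|G_s\|_{L^\infty}(\Theta\diam\Omega)^{-s}/s$, is exactly the mechanism behind \cite[Proposition 3.5]{CozziLombardini}, which the paper simply cites. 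Read literally, your ``upper bound'' comparison is vacuous for the Step 1 minimizer, which already satisfies $|u|\le M$. The real content is in your ``removal of the restriction'' bullet.

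One justification is wrong and must be repaired. In Step 3, and implicitly when you write $\F_s(v)<\F_s(u)$ in Step 2, you assert that $\F_s(\cdot,\Omega)$ is finite ``by Step 2 and $\G_s\le\Lambda|t|$''. Under the hypotheses, $\varphi$ is only bounded in $\Omega_{\Theta\diam(\Omega)}\setminus\Omega$. If, say, $\varphi(y)=|y|$ far away, the quotients $(u(x)-\varphi(y))/|x-y|$ tend to $-1$, so $\G_s$ of them stays above a positive constant. Since $|x-y|^{-(n-1+s)}$ is not integrable at infinity, $\Nl_s(u,\Omega)=+\infty$ for every admissible $u$. The bound $\G_s\le\Lambda|t|$ gives finiteness only when $\varphi$ has an integrable $s$-tail. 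The strict-convexity argument must therefore be run either on $\F^M_s$, which is finite on $\B_M\W^s_\varphi(\Omega)$ and strictly convex \cite[Lemma 2.7]{CozziLombardini}, or on the difference integrals in \eqref{minarea}, which are well defined by \cite[Lemma 2.8]{CozziLombardini}. This is what the paper does, and with this change your uniqueness argument goes through.
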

	It is interesting to observe that
	a solution exists without having to impose, besides boundedness, any regularity assumptions nor on the domain~$A$ where the obstacle is defined,
	nor on the obstacle function~$\psi$. 
	
	\begin{proof}
		%The argument is essentially the same one that we already employed to prove the existence of minimizers of~$\F$
		%in~$\W^s_\varphi(\Omega)$. 
		We begin by considering the functions~$u_M$
		that minimize~$\F_s^M(\,\cdot\,,\Omega)$ in~$\B_M\W^s_{\varphi,\psi}(\Omega, A)$, then we show that they stabilize. The proof follows the steps of \cite[Proposition 5.3]{CozziLombardini}  to prove the existence and uniqueness of a minimizer for $\F_s^M(\cdot,\Omega)$, and \cite[Proposition 3.5]{CozziLombardini} to show their stabilization as $M\to \infty$. 
		%by exploiting \textcolor{red}{\cite[Proposition 5.13]{}. 
		
		\subsubsection*{Step 1. Existence of $M$-minimizers.}
		Let $M\ge\|\psi\|_{L^\infty(A)}$. 
		Notice that \[ \F^M(\cdot,\Omega) \geq 0\] in $\B_M\W^s_{\varphi,\psi}(\Omega, A)$, then let
		\eqlab{\label{flu1} \inf\left\{\F^M(v,\Omega)\,:\,v\in\B_M\W^s_{\varphi,\psi}(\Omega, A)\right\}:=m,}
		and consider a minimizing sequence $u_M^{(k)}$.
		We point out that \eqref{flu1} gives a uniform bound on $[u_M^{(k)}]_{W^{s,1}(\Omega)}$ (see the bound  in formula \cite[(5.2)]{CozziLombardini})  hence on $\|u_M^{(k)}\|_{W^{s,1}(\Omega)}$ thanks to \cite[Lemma D.1.2]{lucaphd}.
		As a consequence of the lower semicontinuity for $\F_s^M$ in \cite[Lemma 5.1]{CozziLombardini} we obtain the existence of a minimizer $u_M\in \B_M\W_\varphi(\Omega)$, where up to a subsequence
		\[ \lim_{k \to \infty}  u_M^{(k)}= u_M  \qquad \mbox{ in } L^1(\Omega) \mbox{ and a.e.  in  } \Omega,\] 
		and 
		\[  \F_s^M(u_M,\Omega)=m.\]
		However, we observe that
		\bgs{
			\W^s_{\varphi,\psi}&(\Omega, A)\subset\W^s_\varphi(\Omega),
			\qquad\B\W^s_{\varphi,\psi}(\Omega, A)\subset\B\W^s_\varphi(\Omega)\\
			&
			\quad\mbox{and}\quad
			\B_M\W^s_{\varphi,\psi}(\Omega, A)\subset\B_M\W^s_\varphi(\Omega)
		}
		are closed convex subsets. This implies that $u_M\in\B_M\W^s_{\varphi,\psi}(\Omega, A) $ and it minimizes $\F_s^M$ in $\Omega$ within this space. The uniqueness of such a minimizer is ensured by the strict convexity of $\F_s^M$, \cite[Lemma 2.7]{CozziLombardini}.

		\subsubsection*{Step 2. Stabilization as $M\to \infty$.}
		
		Now we remark that, since the obstacle~$\psi$ is bounded, we can apply \cite[Proposition 3.5]{CozziLombardini}
		to obtain an a-priori bound on the~$L^\infty$ norm of the minimizers~$u_M$, provided~$M>0$ is big enough.
		Let
		\bgs{
			N:=\diam(\Omega)+\max\left\{\sup_{\Omega_{\Theta\diam(\Omega)}\setminus\Omega}\varphi, \, \sup_A\psi\right\},
		} 
		take
		\bgs{
			u_M^{(N)}:=\chi_\Omega\min\{u_M,N\}+(1-\chi_\Omega)u_M,
		}
		and notice that in $\Omega$,
		\[
		u_M^{(N)} \leq N.\]
		We point out that, for every $M\geq N$ if~$u_M\in\B_M\W^s_{\varphi,\psi}(\Omega, A)$ since~$N\ge\sup_A \psi$  then we clearly
		have~$u_M^{(N)}\in \B_M\W^s_{\varphi,\psi}(\Omega, A)$.
		According to \cite[Proposition 3.5]{CozziLombardini},  we have that
		\[ \F^M_s(u_M^{(N)}, \Omega) \leq  \F^M_s(u_M, \Omega).\] 
		By the uniqueness of the minimizer of the functional~$\F_s^M(\,\cdot\, ,\Omega)$
		in~$\B_M\W^s_{\varphi,\psi}(\Omega, A)$, we can conclude that
		\bgs{
			\sup_\Omega u_M\le N=\diam(\Omega)+\max\left\{\sup_{\Omega_{\Theta\diam(\Omega)}\setminus\Omega}\varphi, \sup_A\psi\right\},
		}
		for every~$M\ge N$.

		Since we can argue in the same way by truncating the functions from below, we find that
		\eqlab{\label{obst_pf_eqn1}
			\|u_M\|_{L^\infty(\Omega)}\le \diam(\Omega) +
			\max\left\{\|\varphi\|_{L^\infty(\Omega_{\Theta \diam(\Omega)}\setminus\Omega)},\|\psi\|_{L^\infty(A)}\right\}=:N_0,
		}
		for every~$M\ge N_0$.
		
		\subsubsection*{Step 3}
		Fix~$M_0:=N_0+1$ and observe that~\eqref{obst_pf_eqn1}
		gives that
		\eqlab{\label{obstacle_monster_power_mannaggia}
			\|u_{M_0}\|_{L^\infty(\Omega)}\le N_0<M_0.
		}
		We claim that this implies that the function~$u:=u_{M_0}$
		solves the obstacle problem. In order to prove this, let us consider~$v\in\B\W^s_{\varphi,\psi}(\Omega, A)$
		and notice that by~\eqref{obstacle_monster_power_mannaggia} we have
		\bgs{
			w:=\lambda v+(1-\lambda)u\in\B_{M_0}\W^s_{\varphi,\psi}(\Omega, A),
		}
		provided~$\lambda\in(0,1)$ is small enough. Thus, by the minimality of~$u$ and exploiting the convexity of~$\F^{M_0}$,
		we find
		\bgs{
			\F_s^{M_0}(u,\Omega)\le\F_s^{M_0}(w,\Omega)\le\lambda\F_s^{M_0}(v,\Omega)+(1-\lambda)\F_s^{M_0}(u,\Omega),
		}
		that is
		\bgs{
			\F_s^{M_0}(u,\Omega)\le\F_s^{M_0}(v,\Omega).
		}
		This shows that~$u$ minimizes~$\F^{M_0}_s(\,\cdot\,,\Omega)$ in~$\B\W^s_{\varphi,\psi}(\Omega, A)$.  As a consequence of \cite[Lemma 2.8]{CozziLombardini}, as noted in Remark \ref{trunc}, we obtain that  $u$ minimizes $\F_{M_0}^s(\cdot, \Omega)$  in the larger space~$\W^s_{\varphi,\psi}(\Omega, A)$, which implies that~$u$ solves the obstacle problem in the sense of the Definition \ref{sol1}. Finally, the strict convexity of~$\F_s^M$
		guarantees the uniqueness of such a solution, %---see also point~$(iii)$ of \cite[Remark 5.4]{CozziLombardini}---
		concluding
		the proof of the Theorem.
	\end{proof}
	
	We state a more general existence result, whose proof is a simple adaptation of the proof of \cite[Theorem 1.3]{CozziLombardini}.
	
	\begin{theorem}\label{Dirichlet_obstacle1}
		Let~$n\ge1$,~$s\in(0,1)$,~$\Omega\subset\Rn$ a bounded open set with Lipschitz boundary. 	Let~$A\subset\Omega$ be an open set and let~$\psi\in L^\infty(A)$. Then, there exists a constant $\Upsilon:=\Upsilon(n,s)>1$ such that for every $\varphi \colon \R^n \to \R$ with
		$
		\Tail_s(\varphi,\Omega_{\Upsilon \diam(\Omega)}\setminus \Omega;\, \cdot\,)\in L^1(\Omega)
		$
		there exists a
		unique function $u\in\W^s_{\varphi,\psi}(\Omega, A)$ that solves the obstacle problem.
		Moreover
		\bgs{
			\|u\|_{W^{s,1}(\Omega)}\le C\left(\|\Tail_s(\varphi, \Omega_{\Upsilon \diam(\Omega)}\setminus \Omega, \, \cdot \,)\|_{L
				^1(\Omega)}+\|\psi\|_{L^\infty(A)} +1\right),
		}
		where $C:=C(n,s,\Omega)>0$.
		
	\end{theorem}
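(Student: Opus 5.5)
The plan is to reduce to the bounded case of Theorem \ref{Dirichlet_obstacle} by truncating the exterior datum far from $\Omega$ and near $\Omega$, and then pass to the limit using a uniform $W^{s,1}$ estimate, as in the proof of \cite[Theorem 1.3]{CozziLombardini}.

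\textbf{Approximation.} For $j\in\N$ with $j\ge\|\psi\|_{L^\infty(A)}$, let $\varphi_j:=\max\{-j,\min\{\varphi,j\}\}$. Each $\varphi_j$ is bounded, so Theorem \ref{Dirichlet_obstacle} gives a unique minimizer $u_j\in\W^s_{\varphi_j,\psi}(\Omega,A)$.

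\textbf{Uniform bound.} Next I would prove a uniform bound on $\|u_j\|_{W^{s,1}(\Omega)}$. I would compare $u_j$ with the competitor $v_j:=\chi_\Omega\,\|\psi\|_{L^\infty(A)}+(1-\chi_\Omega)\varphi_j$, which is admissible. By minimality, $\F_s(u_j,\Omega)\le\F_s(v_j,\Omega)$.

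\emph{Upper bound.} Using $\G_s(t)\le\Lambda|t|$, the cross term is bounded by
\[
\int_\Omega\int_{\Co\Omega}\frac{|\varphi_j(y)|+\|\psi\|_\infty}{|x-y|^{n+s}}\,dx\,dy .
\]
The part of this integral over $\Omega_{\Upsilon\diam(\Omega)}\setminus\Omega$ is controlled by $\|\Tail_s(\varphi,\Omega_{\Upsilon\diam(\Omega)}\setminus\Omega,\cdot)\|_{L^1(\Omega)}+C\|\psi\|_\infty$.

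\emph{Lower bound.} Using $\G_s(t)\ge\Lambda|t|-\lambda$, the interaction of $u_j$ with far-away data must be handled so that it cancels. The natural route is to work with the functional $\F^M_s$, or with differences $\G_s(\cdot)-\G_s(\cdot)$ in the spirit of Remark \ref{trunc}. There the contribution from $\Co\Omega_{\Upsilon\diam\Omega}$ is bounded by $C\int_\Omega|u_j|\,dx$ times $\int_{|z|>(\Upsilon-1)\diam\Omega}|z|^{-n-s}\,dz$, which is $\le \frac12$ times the Poincaré-type constant once $\Upsilon$ is large. This is where $\Upsilon(n,s)$ is chosen.

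\emph{Absorbing.} Combining the two sides with the fractional Poincaré inequality with exterior data \cite[Lemma D.1.2]{lucaphd} lets me absorb $\|u_j\|_{L^1(\Omega)}$. This yields
\[
\|u_j\|_{W^{s,1}(\Omega)}\le C\big(\|\Tail_s\|_{L^1}+\|\psi\|_{L^\infty(A)}+1\big),
\]
with the $+1$ coming from $\lambda|\Omega|^2$-type terms.

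\textbf{Passage to the limit.} By compactness of $W^{s,1}(\Omega)\hookrightarrow L^1(\Omega)$, a subsequence satisfies $u_j\to u$ in $L^1(\Omega)$ and a.e. The constraint $u\ge\psi$ a.e. in $A$ passes to the limit, and the $W^{s,1}$ bound is preserved by Fatou's lemma. For minimality, fix $v\in\W^s_{\varphi,\psi}(\Omega,A)$ and set $v_j:=v\chi_\Omega+\varphi_j\chi_{\Co\Omega}$. Then:
\begin{itemize}
\item apply Fatou's lemma to the nonnegative integrand $\G_s(\cdot)$ on $\Omega\times\Omega$ for $u_j$;
\item apply dominated convergence to the differences $\G_s\big(\frac{v_j(x)-\varphi_j(y)}{|x-y|}\big)-\G_s\big(\frac{u_j(x)-\varphi_j(y)}{|x-y|}\big)$ on $\Omega\times\Co\Omega$. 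These are dominated by $\Lambda|v-u_j|(x)|x-y|^{-n-s}$ thanks to \eqref{lips}, and this domination is integrable in $y$ away from $\Omega$; near $\partial\Omega$ the $L^1$ convergence of $u_j$ and the tail bound are used.
\end{itemize}
Together these give \eqref{minarea} for $u$. Uniqueness follows from strict convexity of $\G_s$, exactly as in Theorem \ref{Dirichlet_obstacle}.

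\textbf{Main obstacle.} I expect the hardest step to be the uniform estimate, specifically making the far-away tail interaction cancel in the comparison $\F_s(u_j)\le\F_s(v_j)$ when $\varphi$ is unbounded at infinity. This requires working with the difference form \eqref{minarea} rather than the possibly infinite $\F_s$ values, and the choice of $\Upsilon$ is made precisely there. The obstacle enters only through the constant competitor level $\|\psi\|_\infty$ and the closedness of the constraint.
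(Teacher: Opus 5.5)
Your route is genuinely different from the paper's. The uniform bound is sound, but the passage to the limit contains a step that fails as written.

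\textbf{Comparison with the paper.} The paper never changes the datum. It shifts $\varphi,\psi$ by $\sup_A|\psi|$, so that the function vanishing in $\Omega$ is an admissible competitor. It then minimizes $\F^M_s$ over $\B_M\W^s_{\tilde\varphi,\tilde\psi}(\Omega,A)$, obtains the uniform $W^{s,1}$ bound by tracking constants in \cite[Proposition 3.2]{CozziLombardini}, and lets $M\to\infty$ as in \cite[Theorem 1.3]{CozziLombardini}. You instead truncate $\varphi$, use Theorem~\ref{Dirichlet_obstacle} as a black box, and let $j\to\infty$. Your uniform estimate is correct and quite transparent. With $d=\diam\Omega$ and $x\in\Omega$, one has $B_{\Upsilon d}(x)\setminus\overline{B_d(x)}\subset\Omega_{\Upsilon d}\setminus\Omega$, and $|x-y|\ge\Upsilon d$ for $y\notin\Omega_{\Upsilon d}$. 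Hence the Lipschitz error from the far region is absorbed by the coercive near term as soon as $2\Upsilon^{-s}<1$.

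\textbf{The gap.} It lies in the limit $j\to\infty$ on $\Omega\times(\Omega_{\Upsilon d}\setminus\Omega)$.
\begin{itemize}
\item Your majorant $\Lambda|v-u_j|(x)\,|x-y|^{-n-s}$ depends on $j$, so dominated convergence requires convergence of its integral. After integrating in $y$, that integral is comparable to $\int_\Omega|v-u_j|\,\dist(x,\partial\Omega)^{-s}\,dx$.
\item This does not follow from $L^1$ convergence plus a uniform $W^{s,1}$ bound. Already for a ball, $w_j:=j^{1-s}\chi_{\{x\in\Omega\,:\,\dist(x,\partial\Omega)<1/j\}}$ tends to $0$ in $L^1(\Omega)$ and is bounded in $W^{s,1}(\Omega)$. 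Yet $\int_\Omega w_j\,\dist(\cdot,\partial\Omega)^{-s}\,dx$ stays bounded away from $0$.
\item The tail assumption on $\varphi$ gives no information about $u_j$ in this region.
\end{itemize}

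\textbf{The repair.} Do not take differences near $\Omega$. Write the minimality inequality for $u_j$ against $v\chi_\Omega+\varphi_j\chi_{\Co\Omega}$ and treat the three kinds of terms separately.
\begin{itemize}
\item Keep on the left the $u_j$-terms over $\Omega\times\Omega$ and over $\Omega\times(\Omega_{\Upsilon d}\setminus\Omega)$. They are nonnegative, so Fatou's lemma applies.
\item The corresponding $v$-terms converge by dominated convergence, using the $j$-independent majorant $\Lambda(|v(x)|+|\varphi(y)|)|x-y|^{-n-s}$. This majorant is integrable by the tail assumption together with the fractional Hardy inequality $\int_\Omega|v|\,\dist(\cdot,\partial\Omega)^{-s}\,dx\le C\|v\|_{W^{s,1}(\Omega)}$, valid for $s<1$. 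Alternatively, test first with $v$ bounded in $\Omega$ and conclude via Remark~\ref{trunc}.
\item Use \eqref{lips} only for $y\notin\Omega_{\Upsilon d}$. There $\int_{\Co\Omega_{\Upsilon d}}|x-y|^{-n-s}\,dy$ is bounded uniformly in $x\in\Omega$, so $L^1$ convergence of $u_j$ does suffice.
\end{itemize}
This is exactly the mechanism (Fatou on nonnegative integrands) that the paper gets for free from the lower semicontinuity of $\F^M_s$ on $\B_M\W^s$. With this repair, your argument gives a correct and more self-contained proof that reuses Theorem~\ref{Dirichlet_obstacle} directly. The paper's route avoids handling a moving exterior datum, but relies on quoting the $\F^M$ machinery and constants from \cite{CozziLombardini}.
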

	
	\begin{proof}
		Since the proof follows from  \cite[Theorem 1.3]{CozziLombardini} with some minor adjustments, we only give here a sketch. First of all, we denote
		\[
		%\tilde u= u- \sup_{A} |\psi|, \qquad 
		\tilde \varphi = \varphi - 
		\sup_{A} |\psi|, \quad \tilde \psi = \psi - \sup_{A} |\psi|.
		\]
			Reasoning as in Step 1 of Theorem \ref{Dirichlet_obstacle}, we  obtain the existence of a minimizer $u_M\in \B_M \W^s_{\tilde \varphi,\tilde \psi}(\Omega, A)$ of $\F^M$ and get a uniform bound on  
		$\|u_M\|_{W^{s,1}(\Omega)}$. More precisely, observing that $v=\chi_{\Co\Omega}u_M\in \B_M \W^s_{\tilde \varphi,\tilde \psi}(\Omega, A)$, a careful inspection of the proof of \cite[Proposition 3.2]{CozziLombardini}, and of the constants appearing therein, gives
		\eqlab{ \label{precc}
			\|u_M\|_{W^{s,1}(\Omega)}	 \leq &\;
			\frac{c_n}{1-s} |\Omega| d \left[ \left(1+\frac{1}s\right)d^{-s} +1\right]\\ &\;  + c_n \| \Tails(\tilde \varphi, \Omega_{{\Upsilon} \diam(\Omega)}\setminus \Omega, \cdot) \|_{L^1(\Omega)} \left(d^s+\frac{1}s+1\right) ,
		}
		where he denote for simplicity $d=\diam(\Omega)$.
		By compactness we can extract a convergent subsequence, which converges to the minimizer $\tilde u\in\W^s_{\tilde\varphi,\tilde\psi}(\Omega, A)$ and the bound \eqref{precc} holds also for $\tilde u$ .  	
		Now taking 
		\[
		u=\tilde u + \sup_A|\psi|,
		\]
		we get that $u\in \W^s_{\varphi,\psi}(\Omega, A)$  is the solution of the obstacle problem in the sense of the Definition \ref{sol1} and
		\eqlab{\label{unifst}
			\| u\|_{W^{s,1}(\Omega)} 
			\leq &\;\frac{c_n}{1-s} |\Omega| d \left[ \left(1+\frac{1}s\right)d^{-s} +1\right] + c_n \| \Tails( \varphi, \Omega_{{\Upsilon} \diam(\Omega)}\setminus \Omega, \cdot) \|_{L^1(\Omega)} \left(d^s+\frac{1}s+1\right)\\ &\; +  \|\psi\|_{L^\infty(A)} \Per_s(\Omega, \Rn) \left[1+ c_n\left(d^s+\frac{1}s+1\right) \right].
		}
		From this, the conclusion is settled.
	\end{proof}

	\subsection{First variation of the fractional area functional} \label{funcgeom}

	In this subsection, we deal with the Euler-Lagrange equation for the area functional, in particular with its weak formulation. 
	
	We define 
	\[ \h_s u(x):= 2 \mbox{\, P.V.} \int_{\Rn} G_s\left(\frac{u(x)-u(y)}{|x-y|}\right) \frac{dy }{|x-y|^{n+s}},\]
	noticing that $u\colon \Rn \to \R$ is required to be $C^{1,\alpha}$ for some $\alpha>s$ around $x$, in order for $\h u$ to be defined point-wisely. Notice the connection between $H_s$ and $\h$: when $E$ is a subgraph, the mean curvature can be written as 
	\[ H_s[\Sg(u)]((x,u(x)))= \h_s u(x),\]
	see for instance \cite{regularity,bootstrap,BucurLombardini,CozziLombardini}. 
	
	Notice that without regularity assumptions on the function $u$, a weak framework can be introduced, according to the following definition.
	
	\begin{defn}
		Let $\Omega \subset \Rn$ be an open set, a function
		$u\colon \Rn \to \R$ is a weak subsolution of $\h  u =0$ in $\Omega$ and we write $\h u \leq 0$  if for all $v\in C^\infty_c(\Omega)$ with $v\geq 0$, it holds that
		\[ \langle \h_s u, v\rangle:= \int_{\R^{2n}} G_s\left(\frac{u(x)-u(y)}{|x-y|}\right) (v(x)-v(y)) \frac{dx\, dy }{|x-y|^{n+s}} \leq 0.\]
		The function $u$ is a weak subsolution and we write $\h_s u \geq 0$ if $\h (-u) \leq 0,$  and it is a weak solution if $u$ is both a super and a subsolution of $\h_s u =0$ in $\Omega$.
	\end{defn}
	
	We point out that a solution of the obstacle problem is a supersolution of the equation~$\h_s u=0$
	in the whole domain~$\Omega$ and a solution away from the contact set, that is, formally:
\bgs{
		\h_s u\ge0\quad\mbox{ in }\Omega\quad\mbox{and}\quad
		\h_s u=0\quad\mbox{ in }\Omega\setminus\{u=\psi\}.
	}

	More precisely, we have the following result:
	\begin{prop}\label{eulero}
		Let~$n\ge1$,~$s\in(0,1)$,~$\Omega\subset\R^n$ a bounded open set with Lipschitz
		boundary,~$\varphi \colon \Rn \to\R$,~$A\subset\Omega$ an open set and~$\psi\in L^\infty(A)$.
		Then a function~$u\in\W^s_{\varphi,\psi}(\Omega, A)$ solves the obstacle problem if and only if
		\eqlab{\label{equa} \langle\h_s u,v-u\rangle\ge0\quad \mbox{ for all } \,v\in \W^s_{\varphi,\psi}(\Omega, A). }
		In particular, if there exists a function~$u\in\W^s_{\varphi,\psi}(\Omega, A)$ that solves the obstacle problem: 
		\begin{enumerate}[(i)]
			\item  it holds that \bgs{
				\langle\h_s u,w\rangle\ge0\quad \mbox{ for all } \,w\in C_c^\infty(\Omega)\quad\mbox{s.t. }w\ge0.
			}
			\item if~$\Op\subset\Omega$ is an open set such that
			\bgs{
				\inf_{\Op\cap A} (u-\psi)\ge\delta,
			}
			for some~$\delta>0$, then
			\bgs{
				\langle\h_s u,w\rangle=0\quad\forall\,w\in C^\infty_c(\Op).
			}
			Furthermore, if~$\Op$ has Lipschitz boundary, then~$u$ minimizes~$\F_s$ in~$\W^s_u(\Op)$.
		\end{enumerate}

		%Moreover,~$u\in L^\infty_{\loc}(\Omega)$.
	\end{prop}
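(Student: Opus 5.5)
The plan is the standard variational-inequality argument for convex functionals on a convex set. Everything rests on the differentiability of $\F_s$ along admissible directions. Fix $u,v\in\W^s_{\varphi,\psi}(\Omega, A)$ and set $u_t:=u+t(v-u)$ for $t\in[0,1]$. Since $v-u=0$ on $\Co\Omega$ and $u_t\ge\psi$ in $A$, each $u_t$ lies in the convex set $\W^s_{\varphi,\psi}(\Omega, A)$. Write $\delta(x,y):=\frac{u(x)-u(y)}{|x-y|}$ and $\eta(x,y):=\frac{(v-u)(x)-(v-u)(y)}{|x-y|}$. By convexity of $\G_s$, the difference quotients
\[
\frac{\G_s(\delta+t\eta)-\G_s(\delta)}{t}
\]
are monotone in $t$ and converge to $G_s(\delta)\eta$ as $t\searrow0$. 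By \eqref{lips} they are bounded by $\Lambda|\eta|$, and $|\eta|\,|x-y|^{-(n-1+s)}$ is integrable on $Q(\Omega)$ because $v-u\in W^{s,1}(\Omega)$ vanishes outside $\Omega$. Dominated convergence then gives
\[
\frac{d}{dt}\Big|_{t=0^+}\F_s(u_t,\Omega)=\iint_{Q(\Omega)}G_s(\delta)\,\eta\,\frac{dx\,dy}{|x-y|^{n-1+s}}=\langle \h_s u, v-u\rangle ,
\]
where in the last equality I use that the integrand vanishes on $(\Co\Omega)^2$, so integrating over $Q(\Omega)$ is the same as integrating over $\R^{2n}$. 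The differences $\F_s(u_t)-\F_s(u)$ are meaningful by \cite[Lemma 2.8]{CozziLombardini}; alternatively one can work with $\F^M_s$, whose difference from $\F_s$ is independent of $u$.

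For the direction ``solves $\Rightarrow$ \eqref{equa}'': minimality gives $\F_s(u_t)-\F_s(u)\ge0$ for every $t$, so the right derivative above is $\ge0$. For the converse, convexity of $\G_s$ gives the pointwise inequality $\G_s(\delta+\eta)-\G_s(\delta)\ge G_s(\delta)\eta$. Integrating over $Q(\Omega)$ yields
\[
\F_s(v)-\F_s(u)\ge\langle\h_s u,v-u\rangle\ge0,
\]
which is \eqref{minarea}.

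For (i), take $w\in C^\infty_c(\Omega)$ with $w\ge0$ and test \eqref{equa} with $v=u+w$. This $v$ is admissible: $w$ is smooth with compact support in $\Omega$, so $v\in\W^s_\varphi(\Omega)$, and $v\ge u\ge\psi$ in $A$. Hence $\langle\h_s u,w\rangle\ge0$.

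For (ii), take $w\in C^\infty_c(\Op)$ and $|\eps|\le\delta/\|w\|_\infty$. Then $v=u+\eps w$ satisfies $v\ge\psi$ a.e. in $A\cap\Op$, since there $u-\psi\ge\delta\ge|\eps w|$, and $v=u$ elsewhere. Testing \eqref{equa} with both signs of $\eps$ gives $\langle\h_s u,w\rangle=0$. For the last claim, let $\Op$ be Lipschitz and $v\in\W^s_u(\Op)$. The competitor $v$ itself may violate the obstacle, so instead I would use $u_t=u+t(v-u)$ with small $t$. Since $v-u$ is only in $W^{s,1}$ and possibly unbounded, $u_t\ge\psi$ can fail. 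To handle this, first truncate $v$ (as in Remark \ref{trunc}) so that $\|v-u\|_{L^\infty(\Op)}$ is finite, and pick $t\le\delta/\|v-u\|_\infty$; then $u_t\in\W^s_{\varphi,\psi}(\Omega,A)$. Here I also need $u$ bounded, or else truncate $v-u$ directly, for instance by replacing it with $\max(\min(v-u,k),-k)$. Minimality then gives $\F_s(u_t,\Omega)\ge\F_s(u,\Omega)$. Since $\F_s(\cdot,\Omega)$ and $\F_s(\cdot,\Op)$ differ, up to a constant, only by interaction terms that vanish when the function is fixed outside $\Op$, this becomes $\F_s(u_t,\Op)\ge\F_s(u,\Op)$. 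By convexity in $t$,
\[
\F_s(u_t,\Op)\le(1-t)\F_s(u,\Op)+t\F_s(v,\Op),
\]
hence $\F_s(u,\Op)\le\F_s(v,\Op)$. Finally I remove the truncation using the $W^{s,1}$ convergence and the continuity in Remark \ref{trunc}.

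I expect the main obstacle to be this last localization step. It needs care in checking the identity $\F_s(w,\Omega)-\F_s(u,\Omega)=\F_s(w,\Op)-\F_s(u,\Op)$ for $w=u$ off $\Op$: this holds because $Q(\Op)\subset Q(\Omega)$ and the integrand agrees on $Q(\Omega)\setminus Q(\Op)$, with finiteness handled as in \cite[Lemma 2.8]{CozziLombardini}. It also needs care in the truncation/density argument that makes the small perturbation admissible.
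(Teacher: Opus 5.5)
Your proof is correct. For the equivalence, for (i), and for the identity $\langle\h_s u,w\rangle=0$ in (ii), you follow the paper's proof: convexity of $\G_s$ in one direction, the right derivative along the admissible segment $u+t(v-u)$ in the other, and the test functions $u+w$ and $u\pm\eps w$. The only difference in these parts is that you compute the derivative yourself, via difference quotients dominated by $\Lambda|\eta|$, where the paper cites \cite[Lemma 2.16]{CozziLombardini}.

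The real divergence is the last claim of (ii). The paper obtains it from \cite[Corollary 2.22]{CozziLombardini}, i.e.\ from the general fact that a weak solution of $\h_s u=0$ in a Lipschitz open set $\Op$ minimizes $\F_s$ in $\W^s_u(\Op)$. That route uses only the Euler--Lagrange identity just proved, but behind the citation the identity has to be extended from $C^\infty_c(\Op)$ to every direction $v-u$, $v\in\W^s_u(\Op)$, by an approximation argument. You instead return to the obstacle-minimality itself:
\begin{enumerate}[(a)]
\item with $T_k(w):=\max\{\min\{w,k\},-k\}$, the functions $u+t\,T_k(v-u)$, $0<t\le\delta/k$, are admissible;
\item the energy difference localizes from $Q(\Omega)$ to $Q(\Op)$, because the integrands coincide on $(\Co\Op)^2$;
\item convexity in $t$ passes from small $t$ to $t=1$;
\item $k\to\infty$ removes the truncation.
\end{enumerate}
Your route is self-contained and needs no density of smooth functions. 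The paper's is shorter and yields the more general statement that weak solutions in $\Op$ are minimizers there.

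One point you should make explicit. Two steps rely on the fractional Hardy inequality
\[
\int_{\Op}|w(x)|\,\dist(x,\partial\Op)^{-s}\,dx\le C\|w\|_{W^{s,1}(\Op)},
\]
valid for Lipschitz sets and $s<1$ (equivalently, extension by zero maps $W^{s,1}(\Op)$ into $W^{s,1}(\Rn)$). These are the integrability of $|\eta|\,|x-y|^{-(n-1+s)}$ on $Q(\Omega)$ in your first paragraph, and the limit $k\to\infty$ on $\Op\times\Co\Op$. This is where the Lipschitz hypotheses on $\Omega$ and $\Op$ are used. The paper uses the same fact implicitly through the cited results.
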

	
	\begin{proof}
		Given any $u,v \in \W^s_{\varphi,\psi}(\Omega, A)$ by convexity we get that
		\[\F_s^0(v, \Omega)- \F_s^0(u, \Omega) \geq \langle \h_s u, v-u\rangle. \]
		Indeed, using \cite[Lemma 2.8]{CozziLombardini} and exploiting the convexity of $\mathcal G_s$,
		\bgs{ \F_s^0(v, \Omega)- \F_s^0(u, \Omega)= &\; \iint_{Q(\Omega)} \left[ \mathcal G_s\left(\frac{v(x)-v(y)}{|x-y|} \right) -\G_s \left(\frac{u(x)-u(y)}{|x-y|} \right)\right] \frac{dx \, dy}{ |x-y|^{n-1+s}}\\
			\geq &\; \iint_{Q(\Omega)}  G_s\left(\frac{u(x)-u(y)}{|x-y|} \right) \frac{ (v-u)(x)-(v-u)(y)}{|x-y|} \frac{dx \, dy}{ |x-y|^{n-1+s} }
			\\
			=&\; \iint_{\R^{2n}}  G_s\left(\frac{u(x)-u(y)}{|x-y|} \right) \left( (v-u)(x)-(v-u)(y)\right) \frac{dx \, dy}{ |x-y|^{n+s} }
			\\
			=&\; \langle \h_su, v-u\rangle ,
		}
		using also the fact that $u-v=0$ on $\Co \Omega$. 
		Thus, if $u\in \W^s_{\varphi,\psi}(\Omega, A) $ satisfies \eqref{equa} then $u$ is a minimizer of $\F_s^0(\cdot, \Omega)$, and thanks to Remark \ref{trunc}, $u$ solves the obstacle problem. \\
		For the other implication,  since $u +t(v-u)$ is in $\W^s_{\varphi,\psi}(\Omega, A)$ for all $t\in [0,1]$, by Remark \ref{trunc} 
		\[ \frac{\F_s^0(u +t(v-u), \Omega) - \F_s^0(u,\Omega)}{t} \geq 0\] 
		thus by \cite[Lemma 2.16]{CozziLombardini}
		since $v-u\in \W^s_0(\Omega)$  we obtain that
		\[ \langle \h_su , v-u \rangle  =\lim_{t\to 0^+} \frac{\F_s^0(u +t(v-u), \Omega) - \F_s^0(u,\Omega)}{t} \geq 0, \]
		establishing \eqref{equa}.
		
		\smallskip 
		To prove (i) it is enough to observe that  for any  $w\in C^\infty_c(\Omega)$ then $v=  u + w \in \W^s_{\varphi, \psi}(\Omega, A)$ and use \eqref{equa}.

		In order to prove (ii), that~$u$ is a solution away from the contact set, let~$w\in C^\infty_c(\Op)$ and
		observe that for every~$|\eps|\le \delta/\|w\|_{L^\infty(\Op)}$ we have~$u+\eps w\in\W^s_{\varphi,\psi}(\Omega, A)$.
		Roughly speaking, since we are away from the contact set, we are allowed to deform the function~$u$ both from above and from below. Hence, again by the minimality of~$u$ and exploiting \cite[Lemma 2.16]{CozziLombardini}, we obtain~$\langle\h u,w\rangle=0$.
		
		Finally, if~$\Op$ has Lipschitz boundary, then we conclude that~$u$ minimizes~$\F$
		in~$\W^s_u(\Op)$ by \cite[Corollary 2.22]{CozziLombardini}.
	\end{proof}
	
	\begin{remark}
		If~$A\subset\subset\Omega$ has Lipschitz boundary,
		then~$u$ minimizes~$\F_s$ in~$\W^s_u(\Omega\setminus\overline{A})$.
	\end{remark}

	\subsection{Boundedness of solutions}
	In this subsection, we provide some a-priori $L^\infty$ bounds for solutions of the obstacle problem. \\
	
	We observe that if $u\in \W^s_{\varphi,\psi}(\Omega, A) $ solves the obstacle problem in $\Omega$ for exterior data $\varphi$ and obstacle $\psi$ in $A$, and $\Omega' \subset \Omega$ is an open set with Lipschitz boundary, then $u$  solves the obstacle problem in $\Omega'$ for exterior data $u$ and obstacle $\psi|_{\Omega'\cap A}$ in $\Omega'\cap A$. Precisely,  if we define $\tilde \varphi:= u$ and $\tilde \psi := \psi|_{\Omega'\cap A}$ then $u\in \W^s_{\tilde \varphi,\tilde \psi}(\Omega', \Omega'\cap A) $ solves the corresponding obstacle problem. \\
	When dealing with local bounds on the solution, it is thus enough to consider the case in which $\Omega=B_{2R}$ and $A \subset B_{2R}$. We  have the following results.
	
	\begin{theorem}\label{inftyloc} Let $\varphi \colon \Rn \to \R$ be a measurable function, $R>0$, $A\subset B_{2R}$ and $\psi \in L^\infty(A)$.
		If $u\in \W^s_{\varphi,\psi}(B_{2R}, A) $ solves the obstacle problem in $B_{2R}$, then
		\[ \sup_{B_R} u \leq  \max\left\{ C\left(R+ \dashint_{B_{2R}} u_+(x) \, dx\right), \sup_{ A} \psi_+\right\} ,\]
		for some constant $C:=C(n,s)>0$.
	\end{theorem}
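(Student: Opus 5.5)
The natural route is to reduce this statement to the corresponding local $L^\infty$ estimate for minimizers without obstacle in \cite{CozziLombardini} (their interior bound for minimizers of $\F_s$, proved by a De Giorgi type iteration on superlevel sets). The idea is that above the level $\sup_A\psi_+$ the obstacle is inactive, so the solution behaves like an unconstrained minimizer from above.

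Set $k_0:=\sup_A\psi_+$. The first step is to show that for every $k\ge k_0$ the truncated competitor
\[ v:=\min\{u,k\}\ \text{in }B_{2R},\qquad v:=u\ \text{in }\Co B_{2R}, \]
is admissible, i.e. $v\in\W^s_{\varphi,\psi}(B_{2R},A)$. This holds because $v\ge\min\{\psi,k\}=\psi$ a.e. in $A$, since $k\ge\psi$ there. More generally, for any nonnegative $\eta\in C^\infty_c(B_{2R})$, the functions $u-t\eta(u-k)_+$ with $t\in[0,1]$ are admissible by the same observation. The Euler--Lagrange characterization in Proposition \ref{eulero} then gives
\[ \langle \h_s u,(u-k)_+\eta\rangle\le 0 \qquad\text{for all } k\ge k_0. \]
Together with (i) of Proposition \ref{eulero}, which gives $\langle \h_s u,(u-k)_+\eta\rangle\ge0$, this would even yield equality. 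Only the sign $\le0$ matters, however, since these are exactly the test functions used in the Caccioppoli-type inequality for subsolutions. In other words, $u$ is a weak subsolution tested against $(u-k)_+\eta$ at all levels $k\ge k_0$. Alternatively, one could directly compare $\F_s(u)$ with $\F_s(v)$ using the minimality \eqref{minarea}.

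The second step is to rerun the De Giorgi iteration of \cite{CozziLombardini} with levels $k_j\nearrow k_\infty$, starting from $k_1\ge k_0$. The quantities to control are the measures $|\{u>k_j\}\cap B_{r_j}|$ and the averages of $(u-k_j)_+$ on shrinking balls $r_j\searrow R$. The structure of $\G_s$ — Lipschitz, with linear growth $\Lambda|t|-\lambda\le\G_s(t)\le\Lambda|t|$ — produces an energy inequality of $W^{s,1}$ type, with an additive error proportional to $R^{n}$ coming from $\lambda$. This error is responsible for the term $CR$ in the estimate. The nonlocal tail contributions are controlled because the kernel $G_s$ is bounded: $|G_s|\le\int_\R g_s<\infty$. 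Consequently no tail of $u$ outside $B_{2R}$ enters, consistent with the statement, which involves only $\dashint_{B_{2R}}u_+$. The iteration then gives $\sup_{B_R}u\le C(R+\dashint_{B_{2R}}u_+)$ whenever the starting level can be taken as $0$. If instead the starting level must be $k_0$, the bound becomes the maximum with $k_0$, since one can start at $\max\{k_0,\ \text{natural level}\}$.

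The main obstacle is this second step. I need to verify that the proof in \cite{CozziLombardini} uses the minimality only through truncations from above at levels $k$ (plus cutoffs), so that restricting to $k\ge k_0$ is harmless. I also need to check that the linear growth (the $L^1$ character of the functional) still closes the iteration. I would first try to quote their local boundedness argument verbatim, replacing "minimizer" by "minimizer with respect to downward truncations at levels $\ge k_0$". If that fails, I would run a simpler one-step argument: compare $u$ with $\min\{u,k\}$ for a large $k$ and use the bounded kernel $G_s$ to show that $|\{u>k\}\cap B_R|=0$ once $k\ge C(R+\dashint u_+)$.
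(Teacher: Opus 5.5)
This is essentially the paper's proof. In both, the only new observation is that for $k\ge\sup_A\psi_+$ and a cutoff with $0\le\eta\le1$, the function $v=u-\eta(u-k)_+$ satisfies $v\ge\min\{u,k\}\ge\psi$ on $A$ and is therefore an admissible competitor, after which the argument of \cite[Proposition 3.3]{CozziLombardini} runs verbatim; note that admissibility needs $t\eta\le1$, not merely $\eta\ge0$. The paper compares $\F_s(u)$ with $\F_s(v)$ directly, which is your stated alternative, so the detour through $\langle\h_s u,\eta(u-k)_+\rangle\le0$ is harmless but unnecessary, whereas your one-step fallback (a truncation $\min\{u,k\}$ on all of $B_{2R}$, with no cutoff or iteration) does not localize to $B_R$ and should be dropped.
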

	
	\begin{proof}
		The argument for the bound on the supremum is basically the same of \cite[Proposition 3.3]{CozziLombardini}. 
		The starting point, avoiding technical details, consists in considering  $\eta \in C_c^\infty(\Rn)$ a cutoff function with support in $B_{\tau}$ for $\tau< 2R$, laying between $0$ and $1$ (and satisfying some other properties) and taking
		$k \geq \sup_{A} \psi_+$. 
		Let \[ v=u- \eta w_k, \qquad \mbox{ with } \quad w_k:=(u-k)_+.\] It is clear that $v\leq u$ and $v\in \W^s_{u, \psi|_{B_\tau\cap A}}(B_{\tau}, A \cap B_{\tau})$  is a competitor for $u$ in $B_\tau$ with obstacle $\psi$ in $A \cap B_{\tau}$. We can thus compare the energies of $u$ and $v$ and follow the argument in \cite[Proposition 3.3]{CozziLombardini}, concluding that the set $\{ u\geq k\}\cap B_{R}$  has Lebesgue measure zero, provided $k$ is large enough.
	\end{proof}
	
	Notice that if $u$ solves the obstacle problem in $B_{2R}$, and $A=B_{2R}$ then $u$ has to stay above the obstacle, so $\inf_{A} u \geq \inf_{A} \psi$. However, using an argument similar to (the above), we might improve the estimate. The bound also holds where the obstacle is not defined. 
	\begin{theorem}\label{infb}
		Let $\varphi \colon \Rn \to \R$ be a measurable function, $R>0$, $A\subset B_{2R}$ and $\psi \in L^\infty(A)$.
		If $u\in \W^s_{\varphi,\psi}(B_{2R}, A) $ solves the obstacle problem in $B_{2R}$, then
		\[ \inf_{B_{R}}  u\geq  -  \min \left\{ C\left(R+ \dashint_{B_{2R}} u_-(x) \, dx\right), \sup_{ A} \psi_-\right\} , \]
		for some constant $C:=C(n,s)>0$.
	\end{theorem}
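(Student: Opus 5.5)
The plan is to mirror the proof of Theorem \ref{inftyloc}, now truncating from below, which is the counterpart of the bound on $\inf u$ in \cite[Proposition 3.3]{CozziLombardini}. The key observation is that the obstacle constraint $u\ge\psi$ only restricts downward perturbations. Raising $u$ is therefore always admissible, and the De Giorgi-type argument on the negative part needs no truncation level tied to $\psi$. Concretely, for $k\ge0$ set $w_k:=(u+k)_-=(-k-u)_+$. Take a cutoff $\eta\in C^\infty_c(B_\tau)$, $0\le\eta\le1$, with $\tau<2R$ and the same auxiliary properties used in \cite[Proposition 3.3]{CozziLombardini}, and define
\[ v:=u+\eta\, w_k . \]
Then $v\ge u\ge\psi$ a.e. in $A\cap B_\tau$, and $v=u$ outside $B_\tau$. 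Hence $v\in\W^s_{u,\psi|_{B_\tau\cap A}}(B_\tau,A\cap B_\tau)$ for every $k\ge0$, with no constraint on $k$ in terms of $\psi$.

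By the localization remark preceding Theorem \ref{inftyloc}, $u$ solves the obstacle problem in $B_\tau$, so $\F_s(u,B_\tau)\le\F_s(v,B_\tau)$. Alternatively, use the variational inequality \eqref{equa} of Proposition \ref{eulero} with this $v$, which gives $\langle\h_s u,\eta w_k\rangle\ge0$. This is exactly the supersolution inequality $\h_s u\ge0$ tested against the nonnegative function $\eta w_k$. Equivalently, $-u$ is a subsolution tested with $\eta(-u-k)_+$. The energy comparison, and the iteration over a shrinking sequence of radii $\tau_j\searrow R$ and levels $k_j\nearrow k$, are then verbatim those of \cite[Proposition 3.3]{CozziLombardini} applied to $-u$. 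From them we conclude that $|\{u\le -k\}\cap B_R|=0$ as soon as
\[ k\ge C\Big(R+\dashint_{B_{2R}}u_-\,dx\Big), \]
with $C=C(n,s)$.

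It remains to explain the minimum in the statement. This should be read together with the trivial bound from the constraint: where the obstacle is defined, $u\ge\psi\ge-\sup_A\psi_-$. The De Giorgi bound holds on all of $B_R$, including where the obstacle is not defined. Combining the two gives the estimate in the form stated, with $-\min\{\cdot,\cdot\}$ understood as the better of the two lower bounds on the part of $B_R$ where both apply.

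The main obstacle I expect is checking that the test-function step in \cite[Proposition 3.3]{CozziLombardini} used only one-sided perturbations, namely those with $v\ge u$ when bounding the infimum. The earlier $v=u-\eta w_k$ in Theorem \ref{inftyloc} needed $k\ge\sup_A\psi_+$ precisely because it lowered $u$. If that proof instead relied on the full Euler--Lagrange equation, I would replace it by the one-sided inequality from Proposition \ref{eulero}(i), extended by density to the test function $\eta w_k\in\W^s_0(B_\tau)$. That inequality suffices for the Caccioppoli-type estimate on $w_k$.
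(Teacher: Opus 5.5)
Your proposal is correct and takes the same route as the paper. You raise $u$ below level $-k$ via $v=u+\eta\,(u+k)_-\ge u$, which is automatically admissible for the localized obstacle problem, and then run the De Giorgi argument of \cite[Proposition 3.3]{CozziLombardini} for $-u$ to get $|\{u\le -k\}\cap B_R|=0$ for $k\ge C(R+\dashint_{B_{2R}}u_-)$. Your worry about one-sidedness is unfounded: $\G_s$ is even, and, as in Theorem \ref{inftyloc}, only this single comparison enters.

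You are also right that the restriction $k\ge\sup_A\psi_-$ imposed in the paper is unnecessary; on its own it would only yield the bound with a $\max$. Your reading of the $\min$ is the correct one: it holds on $A\cap B_R$, where $u\ge\psi$ supplies the second bound, while on $B_R\setminus A$ only the De Giorgi bound is available.
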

	%\[= max \left\{ -C\left(R+ \dashint_{B_{2R}} u_-(x) \, dx\right), \inf_{B_{2R}\cap A} (-\psi_-)\right\} \]
	\begin{proof}
		The proof follows the lines of Theorem \ref{inftyloc} (and the already mentioned \cite[Proposition 3.3]{CozziLombardini}. We consider $\tau <2R$ and $\eta\in C^\infty_c(\Rn)$ as defined in Theorem \ref{inftyloc}, take any $k \geq \sup_{A} \psi_-$ and let 
		\[ w_k:= (u+k)_-, \qquad \mbox{ and } \quad 
		v:= u-  \eta w_k.\]
		We notice that
		$  v \geq u $  in $B_{2R}$,  
		in particular  
		$ v \geq \psi  $  in $A$ 
		and  $ v\in \W^s_{u, \psi|_{A\cap B\tau}}(B_\tau, A\cap B_\tau) $  
		and again $v$ is a competitor for $u$ in $B_\tau$ with obstacle $\psi$ in $A \cap B_\tau$. The conclusion follows by showing that the set $\{u\leq -k\}\cap B_R$ has Lebesgue measure zero for $k$ large enough. 
	\end{proof}
	
	Of course, the above results yield  
	a bound on the norm $L^\infty(\Omega')$  for any $\Omega' \subset \subset \Omega$, by covering $\overline \Omega'$ with a finite number of balls with radius smaller than $\dist(\Omega', \partial \Omega) /2$ and using Theorems \ref{inftyloc} and \ref{infb} for each ball. Recalling the notation in \eqref{neigh}, since for any $\tau>0$ we have $\Omega_{-\tau} \subset \subset \Omega$, then for all $\tau >0$ the norm $\|u\|_{L^\infty(\Omega_{-\tau})}$ is bounded in terms of $\Omega, \|\psi\|_{L^\infty(A)}$ and $\|u\|_{L^1(\Omega)}$. We obtain furthermore an  estimate near the boundary. 
	
	\begin{prop} \label{nui}
		Let~$n\ge1$,~$s\in(0,1)$,~$\Omega\subset\Rn$ be a bounded open set with Lipschitz boundary.
		Let~$A\subset\Omega$ be an open set,~$\psi\in L^\infty(A)$ and let~$\varphi:\Rn\to\R$ be measurable function.
		If~$u\in\W^s_{\varphi,\psi}(\Omega, A)$ solves the the obstacle problem, and $\varphi \in L^\infty(\Omega_d\setminus \Omega)$ for some $d>0$, then $u\in L^\infty(\Omega)$ and it holds that
		\[ \|u\|_{L^\infty(\Omega\setminus \Omega_{-\theta d})} \leq  C\max\{ \|u\|_{L^\infty(\Omega_{-\theta d})} , \|\psi\|_{L^\infty(A)}, \|\varphi\|_{L^\infty(\Omega_d\setminus \Omega)}\}\]
		for some constant $\theta :=\theta(n,s,\Omega)\in (0,1)$.
	\end{prop}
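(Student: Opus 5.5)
The plan is to get the boundary layer bound from the interior bound together with the boundedness of $\varphi$ near $\partial\Omega$, using the same truncation-and-uniqueness argument as in Step 2 of the proof of Theorem \ref{Dirichlet_obstacle}, but applied to a smaller domain.

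\textbf{Localization.} Fix $\theta\in(0,1)$, to be chosen depending on $n,s,\Omega$, and set $\Omega':=\Omega_{-\theta d}$. Shrinking $d$ if needed, we may assume $\theta d<2r_0$, so that $\Omega'$ has Lipschitz (even $C^2$) boundary. In the non-smooth Lipschitz case we would instead replace $\Omega'$ by a Lipschitz set squeezed between two neighbourhoods of $\partial\Omega$. Consider the open set $U:=\Omega\setminus\overline{\Omega'}$, a thin layer of width $\theta d$ near $\partial\Omega$. By the observation preceding Theorem \ref{inftyloc}, $u$ solves the obstacle problem in $U$ with exterior data $\tilde\varphi:=u$ and obstacle $\psi|_{U\cap A}$. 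For this data we know:
\begin{itemize}
\item $|\tilde\varphi|\le\|\varphi\|_{L^\infty(\Omega_d\setminus\Omega)}$ on $\Omega_d\setminus\Omega$;
\item $|\tilde\varphi|\le\|u\|_{L^\infty(\Omega')}$ on $\Omega'$.
\end{itemize}
Hence
\[
|\tilde\varphi|\le K:=\max\{\|u\|_{L^\infty(\Omega')},\|\varphi\|_{L^\infty(\Omega_d\setminus\Omega)}\}
\]
on $(\Omega_d\setminus\Omega)\cup\Omega'$. This set contains the neighbourhood $U_{\rho}\setminus U$ with $\rho:=\min\{d,\ \text{(distance scale of }\Omega')\}$.

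\textbf{The main step.} Bound $u$ on $U$ via Theorem \ref{Dirichlet_obstacle} applied to $U$, which has $\diam U\le\diam\Omega$. That theorem needs the data bounded on $U_{\Theta\diam U}\setminus U$, and this region is far larger than what we control. This is the main obstacle. To get around it, I would redo the truncation argument of \cite[Proposition 3.5]{CozziLombardini} with the thin geometry of $U$ in mind. The width of $U$ in the normal direction is $\theta d$, and the key input of that proposition is a comparison: truncating at level $N$ lowers the energy provided the data is below $N$ within a distance proportional to the "width" of the domain. The idea is to cover $U$ by balls $B_{c\theta d}(x_0)$ centred at points of $\partial\Omega$; in each such ball the layer has diameter $\sim\theta d$. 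Then I would check that the contribution to $\F^M_s$ from far-away points, where $\tilde\varphi$ may be large, is controlled by the Lipschitz bound \eqref{lips}. Choosing $\theta$ small in terms of $\Theta(n,s)$ and the Lipschitz character of $\Omega$ should make the portion of $\Co U$ within $\Theta\cdot(\text{local width})$ lie inside $(\Omega_d\setminus\Omega)\cup\Omega'$. The expected conclusion is
\[
u\le C(\theta d+K+\|\psi\|_{L^\infty(A)})\quad\text{on } U,
\]
and the symmetric truncation from below gives the lower bound.

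\textbf{Conclusion.} The additive constant $\theta d$ is absorbed into $C\max\{\cdot\}$ by allowing $C$ to depend on $\Omega$, or by enlarging $C$ when the maximum is small; alternatively we accept a "$+C$" form. Combining this with the local bounds of Theorems \ref{inftyloc} and \ref{infb} on $\Omega'$, which give $u\in L^\infty(\Omega')$, we obtain $u\in L^\infty(\Omega)$ together with the stated estimate. If the localized truncation turns out to be too delicate, the fallback is a finite covering of $\overline U$ by small balls, combined with a boundary version of Theorem \ref{inftyloc} in which the cutoff $\eta$ is allowed to cross $\partial\Omega$, where $u=\varphi$ is bounded.
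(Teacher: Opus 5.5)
Your ingredients are the right ones (truncation at a level $N$ above $\sup_{\Omega_{-\theta d}}u$, the Lipschitz bound \eqref{lips} for the far-away datum, $\theta$ small). However, the step that carries the whole proof is only announced (``should'', ``expected conclusion''), and the device you sketch for it does not work as described. Suppose the covering of $U$ by balls $B_{c\theta d}(x_0)$ is meant to let you run \cite[Proposition 3.5]{CozziLombardini} on pieces $B\cap U$ of diameter $\sim\theta d$. Then the exterior datum of each piece contains $u$ on $U\setminus B$, which is exactly what is not yet controlled. These values sit at arbitrarily small distance from the points of $B\cap U$ near $\partial B$. There \eqref{lips} only bounds the loss by a density of order $(u(x)-N)\dist(x,\partial B)^{-s}$, which no gain of order $(\theta d)^{-s}$ can absorb. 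Replacing $\diam\Omega$ by a ``width'' is precisely the nontrivial point. You never say where the energy gain at a point $x\in\{u>N\}$ comes from, nor why it beats the far-field loss. Your fallback, a cutoff argument as in Theorem \ref{inftyloc} across $\partial\Omega$, would bound $u$ by averages of $u_+$ over the layer, not by the quantities in the statement.

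The missing idea is a single global truncation plus an explicit comparison of two scales; this is what the paper does, following \cite[Proposition 3.6]{CozziLombardini}.
\begin{enumerate}
\item Take $N\ge d+\max\{\sup_{\Omega_{-\theta d}}u,\sup_{\Omega_d\setminus\Omega}\varphi,\sup_A\psi\}$, and set $u^{(N)}:=\min\{u,N\}$ in $\Omega$, $u^{(N)}:=u$ outside.
\item This competitor is admissible and $\A_s$ does not increase. So \eqref{minarea} gives $\beta_1+\beta_2\ge0$, where $\beta_1,\beta_2$ are the changes in the interaction of $\Omega_+:=\{u>N\}$ with $\Co\Omega\setminus B_d(x)$ and with $B_d(x)\setminus\Omega$, respectively.
\item By \eqref{lips}, $\beta_1\le C_1d^{-s}\int_\Omega(u-N)_+$.
\item The gain comes from $\beta_2$. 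Since $\Omega_+\subset\Omega\setminus\Omega_{-\theta d}$, every $x\in\Omega_+$ lies within $\theta d$ of $\Co\Omega$. The exterior cone condition of the Lipschitz domain then gives $\int_{B_d(x)\setminus\Omega}|x-y|^{-n-s}\,dy\ge c\,(\theta d)^{-s}$. Since $(N-u(y))/|x-y|\ge1$ there, convexity of $\G_s$ yields $\beta_2\le-c_2(\theta d)^{-s}\int_\Omega(u-N)_+$.
\item Hence $(c_2\theta^{-s}-C_1)\,d^{-s}\int_\Omega(u-N)_+\le0$, and choosing $\theta$ small gives $|\Omega_+|=0$.
\end{enumerate}

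No localization to $U$ is needed, because truncating above $\sup_{\Omega_{-\theta d}}u$ only changes $u$ in the layer. Your localization actually causes trouble. For merely Lipschitz $\Omega$ the set $\Omega_{-\theta d}$ need not be Lipschitz, and $r_0$ is not available. Shrinking $d$ is also not harmless, since it enlarges the set $\Omega_{-\theta d}$ over which the right-hand side takes the supremum of $u$.
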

	The proof follows the strategy in  \cite[Proposition 3.6]{CozziLombardini}, with small modifications. We insert here a sketch for completeness.  One considers
	\[ N \geq\ d+ \max\{  \sup_{\Omega_{-\theta d}} u, \sup_{\Omega_d\setminus \Omega} \varphi , \sup_A \psi \}  \]
	with $\theta$ small to be chosen later. Then one calls $\Omega_+ :=\{u >N\}$ and  proves that $|\Omega_+|=0.$
	\\
	To do this, using \cite[Proposition 3.5]{CozziLombardini} one defines $u^{(N)}$  as 
	\sys[ u^{(N)} = ]{ & \min\{ u, N \}  &&\mbox{ in } \Omega,\\
		& u &&\mbox{ in } \Co \Omega, }  
	and uses that 
	\[ \mathcal N_s^M(u^{(N)},\Omega) \leq \mathcal N_s^M(u, \Omega)\]
	to obtain  
	$  \beta_1+\beta_2 \geq 0$. 
	As in \cite[Proposition 3.6]{CozziLombardini}, we have denoted
	\[ \beta_1:=\int_{\Omega_+} \left\{ \int_{\Co B_d(x) \setminus \Omega}  \left[ \G \left( \frac{N-u(y)}{|x-y|} \right) -\G \left( \frac{u(x)-u(y)}{|x-y|} \right)  \right]  \, \frac{dy}{|x-y|^{n-1+s}} \right\} \, dx\]
	and
	\[\beta_2 := \int_{\Omega_+} \left\{  \int_{B_d(x) \setminus \Omega} \left[ \G \left( \frac{N-u(y)}{|x-y|} \right) -\G \left( \frac{u(x)-u(y)}{|x-y|} \right)  \right]  \, \frac{dy}{|x-y|^{n-1+s}} \right\} \, dx.\] 
	The same estimates in \cite[Proposition 3.6]{CozziLombardini} can be obtained and one gets that 
	\[ \frac{1}{d^s} \left(\frac{c_2}{\theta^s} -C_1\right) \int_{\Omega}(u(x)-N)_+\, dx  \geq 0,\]
	which by choosing $\theta$ small enough would give that $u\leq N$ almost anywhere in $\Omega$. This concludes the proof. 
	
	\section{Geometric obstacle problem}\label{setting}

	In this section we study the \emph{obstacle problem} in the geometric setting, i.e. we look for minimizers of the fractional perimeter in the unbounded domain~$\Omega^\infty$, in the class of subgraphs.
	This problem has been considered for general sets and in the case of bounded domains in~\cite{CDSS16}, where
	the authors proved a regularity result for the solution.
	
	We introduce in this subsection the general geometric setting, used to deal with any minimal set (and not only with subgraphs). The result obtained in this paper have a geometrical flavor, and this is mainly the reason why the functional setting is used only to prove existence and uniqueness of solutions. The two settings however are equivalent, see \cite[Theorem 1.10]{CozziLombardini}: minimizing the area functional introduced in Section \ref{functsett} (check \eqref{minarea}) for  some $u\in \W^{s,1}(\Omega)$, is equivalent to locally minimizing the perimeter in $ \Omega\times \R$ among sets with given exterior data $\Sg(u,\Co \Omega)$. The same equivalence holds also for the obstacle problem, as we see in Proposition \ref{equiv}.

	In this section, we do not aim at full generality. In particular, we define
	a geometric minimizer only in a setting of our interest, by considering only a bounded open set~$\Omega\subset\R^n$ with Lipschitz boundary, and
	bounded obstacles.
	
	Let~$n\ge1$,~$s\in(0,1)$, $\mathcal O\subset\R^{n+1}$ be a bounded open set with Lipschitz
	boundary.  The fractional perimeter defined as
	\eqlab{ \label{perimeter} \Per_s(E,\mathcal O)= \iint_{\R^{2(n+1)} \setminus (\Co \mathcal O)^2 } \frac{ |\chi_E(X)-\chi_E(Y)|}{|X-Y|^{n+1+s}}\, dXdY,} can be divided in
	\begin{equation} \label{nmspf1} \text{Per}_s(E,\mathcal O) := \Ll_s(E\cap \mathcal O,\Co E)
	+\Ll_s(E\setminus \mathcal O,\mathcal O \setminus E), \end{equation}
	where the interaction $\Ll_s(A,B)$ occurs between two disjoint subsets of $\R^{n+1}$, precisely
	\begin{equation}
	\Ll_s(A,B):=\int_A \int_B \frac{dX\, dY}{|X-Y|^{n+1+s}}
	=\int_{\R^{n+1}} \int_{\R^{n+1}} \frac{ \chi_A(X) \chi_B (Y) }{|X-Y|^{n+1+s}}\, dX \, dY.
	\end{equation}

	Let now~$\varphi:\R^n\to\R$,~$A\subset\Omega$ be an open set and let~$\psi\in L^\infty(A)$.	
	We say that a set~$E\subset\R^{n+1}$, given such that~$E\setminus\Omega^\infty=\Sg(\varphi)\setminus\Omega^\infty$
	and such that~$\Sg(\psi, A)\subset E$, \emph{solves the geometric obstacle problem}
	if for every~$M\ge\|\psi\|_{L^\infty(A)}$ it holds that~$\Per_s(E,\Omega^M)<\infty$, and if for every~$F\subset\R^{n+1}$ such that~$F\setminus\Omega^M=E\setminus\Omega^M$
	and~$\Sg(\psi,A)\subset F$, it holds 
	\bgs{
		\Per_s(E,\Omega^M)\le\Per_s(F,\Omega^M).
	}

	We recall that the $s$-fractional mean curvature of a set $E\subset \R^{n+1}$ at a point $Q\in\partial E$ is defined as the principal value integral
	\[H_s[E](Q):=P.V.\int_{\R^{n+1}}\frac{\chi_{\Co E}(Y)-\chi_E(Y)}{|Y-Q|^{n+1+s}}\,dY,\]
	precisely,
	\[H_s[E](Q):=\lim_{\rho\to0^+}H_s^\rho[E](Q),\qquad\textrm{where}\qquad
	H_s^\rho[E](Q):=\int_{
		\Co B_\rho(Q)}\frac{\chi_{\Co E}(Y)-\chi_E(Y)}{|Y-Q|^{n+1+s}}\,dY.\] 
	For the main properties of the fractional mean curvature, we refer e.g. to \cite{Abaty}.
	
	\begin{remark}\label{geom_obst_locmin_remark}
		We observe that if~$E\subset\R^{n+1}$ solves the geometric obstacle problem, then it is locally $s$-minimal in the open
		set~$\Omega^\infty\setminus\overline{\Sg(\psi,A)}$. Then we have an Euler-Lagrange equation along $\partial E$ in three cases: in the interior of the domain away from the obstacle, on the boundary of the domain and on the contact set. We write this precisely. 
		\begin{enumerate}
			\item 
			According to \cite[Theorem 5.1]{nms}, the following equation 
			\[ H_s[E]=0 \qquad \mbox{ in }   \partial E \cap \left( \Omega^\infty\setminus\overline{\Sg(\psi,A)} \right)\] is satisfied in the viscosity sense. Precisely, if $X_0\in \partial E \cap \left( \Omega^\infty\setminus\overline{\Sg(\psi,A)} \right)$ and $E$ has either an interior or exterior tangent ball at $X_0$, then $H_s[E](X_0)$ is well defined and is equal to zero (see \cite[Theorem B.7]{BucurLombardini}. 
			\item Assume that  $A\subset \subset \Omega$ and $\partial \Omega$ is of class $C^2$. Suppose that $X_0=(x_0,t_0)\in \partial E \cap \partial \Omega^\infty$, i.e. $x_0\in \partial \Omega$. If there exists $R>0$ such that 
			\begin{itemize}
				\item $\varphi(x)\leq t_0-R$ for almost every $x\in B_R(x_0)\setminus \Omega$, then 
				\[ H_s[E](X_0) \leq 0;\]
				\item $\varphi(x)\geq t_0+R$ for almost every $x\in B_R(x_0)\setminus \Omega$, then 
				\[ H_s[E](X_0) \geq 0.\]
				For more details, see \cite[Theorem B9]{BucurLombardini}.
			\end{itemize}
			Assume now that $A=\Omega$ again with $C^2$ boundary, and $\psi \in C(\overline A)$ then the same holds true provided that $\psi(x_0) <t_0$. 
			\item Consider $A\subset \Omega$ an open set and $\psi \in C(\overline A)\cap C^2(A)$. Let $X_0=(x_0,\psi(x_0)) \in \partial E$ with $x_0\in A$. Then $H_s[E](X_0)$ is well defined and
			\[ H_s[E](X_0) \geq 0.\]
			This follows from \cite[(2.2)]{CDSS16} (keep in mind that in this paper the fractional mean curvature is considered with the opposite sign), see also \cite[Theorem B.9]{BucurLombardini}.
		\end{enumerate}
		
		Moreover, we also observe that taking $A\subset \subset \Omega$ and $\partial A$ of class $C^2$, $\psi \in L^\infty(A)$ and $X_0=(x_0,t_0)\in \partial E$ with $x_0\in \partial A$ and assume that there exists $R>0$ such that $\psi(x)\geq t_0+R$ for almost any $x\in B_R(x_0)\cap A$. Then $H_s[E](X_0)$ is well defined and
		\[ H_s[E](X_0) \geq 0.\]
	\end{remark}

	By exploiting \cite[Theorem 1.10]{CozziLombardini}, 
	it is readily seen that if~$u$ solves the obstacle problem \eqref{mainpb2}, then its subgraph
	solves the geometric obstacle problem. 
	\begin{prop}\label{equiv}
		Let~$n\ge1$,~$s\in(0,1)$,~$\Omega\subset\Rn$ be a bounded open set with Lipschitz boundary %~$R_0>1$ be such that $\Omega\subset B_{R_0}$ 
		and let~$\Theta=\Theta(n,s)>1$ be as in \cite[Theorem 1.4]{CozziLombardini}.
		Let~$A\subset\Omega$ be an open set,~$\psi\in L^\infty( A)%C^0(\overline A)
		$
		and let~$\varphi:\R^n\to\R$ such that~$\varphi\in L^\infty(\Omega_{\Theta \diam(\Omega)}%B_{\Theta R_0}
		\setminus\Omega)$.
		Let~$u\in\W^s_{\varphi,\psi}(\Omega, A)$ be the unique solution of the obstacle problem, as in
		Theorem~\ref{Dirichlet_obstacle}. Then~$\Sg(u)$ solves the geometric obstacle problem.
	\end{prop}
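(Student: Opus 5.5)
We will show that $\Sg(u)$ solves the geometric obstacle problem in three steps: a reduction to competitors with vertical boundedness, a rearrangement of such competitors into subgraphs, and a comparison of perimeters with the area functional.

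\textbf{Admissibility and finiteness.} By Theorem~\ref{Dirichlet_obstacle} we have $u\ge\psi$ a.e. in $A$, so $\Sg(\psi,A)\subset\Sg(u)$ up to null sets. Also $\Sg(u)\setminus\Omega^\infty=\Sg(\varphi)\setminus\Omega^\infty$. By the bound \eqref{jbond}, $u$ is bounded in $\Omega$ and $\varphi$ is bounded in $\Omega_{\Theta\diam(\Omega)}\setminus\Omega$. Using the representation of $\Per_s(\Sg(u),\Omega^M)$ in terms of $\F^M_s(u,\Omega)$ from \cite{CozziLombardini} (the identity behind \cite[Theorem 1.10]{CozziLombardini}), this gives $\Per_s(\Sg(u),\Omega^M)<\infty$ for every $M$.

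\textbf{Reduction to subgraph competitors.} Fix $M\ge\|\psi\|_{L^\infty(A)}$ and a competitor $F$ with $F\setminus\Omega^M=\Sg(u)\setminus\Omega^M$ and $\Sg(\psi,A)\subset F$.

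1. If $M\le\|u\|_{L^\infty(\Omega)}$, we first enlarge $M$. Since $F$ and $\Sg(u)$ coincide outside $\Omega^M\subset\Omega^{M'}$ for $M'>M$, we have
\[
\Per_s(F,\Omega^{M'})-\Per_s(\Sg(u),\Omega^{M'})=\Per_s(F,\Omega^M)-\Per_s(\Sg(u),\Omega^M).
\]
This holds because the interactions not involving $\Omega^M$ cancel. So we may take $M>\|u\|_{L^\infty(\Omega)}$ without loss of generality.

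2. We replace $F$ by its vertical rearrangement $F^\ast=\Sg(w)$, where
\[
w(x)=-M+\int_{-M}^{M}\chi_F(x,t)\,dt \quad\text{in }\Omega,\qquad w=\varphi \text{ outside }\Omega.
\]
Since $F\supset\Omega\times(-\infty,-M]$ and $F\cap(\Omega\times[M,\infty))=\emptyset$, we get $|w|\le M$. By \cite[Theorem 1.9]{CozziLombardini}, $\Per_s(F^\ast,\Omega^M)\le\Per_s(F,\Omega^M)$.

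3. The rearrangement preserves the obstacle constraint. For a.e. $x\in A$, the vertical section of $F$ contains $(-M,\psi(x))$. Hence
\[
w(x)\ge -M+(\psi(x)+M)=\psi(x),
\]
so $w\in\B_M\W^s_{\varphi,\psi}(\Omega,A)$, provided $w|_\Omega\in W^{s,1}(\Omega)$. That last condition follows from finiteness of $\Per_s(F,\Omega^M)$, which may be assumed since otherwise there is nothing to prove.

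\textbf{Comparison via the area functional.} For subgraphs of functions bounded by $M$ in $\Omega$ and agreeing outside $\Omega$, \cite{CozziLombardini} gives
\[
\Per_s(\Sg(v),\Omega^M)=\F^M_s(v,\Omega)+c_M,
\]
with $c_M$ independent of $v$. This is precisely the reason $\Nl^M_s$ is defined as in \eqref{NMdef}. By Remark~\ref{trunc}, $u$ minimizes $\F^M_s(\cdot,\Omega)$ in $\W^s_{\varphi,\psi}(\Omega,A)$. Therefore
\[
\Per_s(\Sg(u),\Omega^M)\le\Per_s(\Sg(w),\Omega^M)\le\Per_s(F,\Omega^M).
\]

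\textbf{Main obstacle.} The delicate step is the rearrangement. We must check that \cite[Theorem 1.9]{CozziLombardini} applies with the truncated cylinder $\Omega^M$ and exterior data $\Sg(\varphi)$, and that the perimeter identity holds with the constant $c_M$ genuinely independent of $v$. To settle this, we would inspect the proof of \cite[Theorem 1.10]{CozziLombardini}. The obstacle constraint only enters through step 3 above, which is monotone, so it should pass through unchanged.
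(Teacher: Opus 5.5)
Your proposal is correct and follows the paper's proof. You check admissibility and finiteness of $\Per_s(\Sg(u),\Omega^M)$, rearrange the competitor vertically via \cite[Theorem 1.9]{CozziLombardini} into $\Sg(w_F)$ with $w_F\in\B_M\W^s_{\varphi,\psi}(\Omega,A)$, and conclude with the identity $\Per_s(\Sg(v),\Omega^M)=\F^M_s(v,\Omega)+\kappa_{\Omega,M}$ from \cite[Proposition 2.12]{CozziLombardini}. Your preliminary step, showing that the perimeter difference does not depend on $M$, is a small improvement: the paper simply takes $M\ge\max\{\|u\|_{L^\infty(\Omega)},\|\psi\|_{L^\infty(A)}\}$ and never treats the range $\|\psi\|_{L^\infty(A)}\le M<\|u\|_{L^\infty(\Omega)}$, which the definition of the geometric obstacle problem requires.
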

	\begin{proof}
		The proof argument is basically the same as \cite[Theorem 1.11]{CozziLombardini}.\\ We consider  $M\geq \max\{ \|u\|_\infty(\Omega),  \|\psi\|_{L^\infty(A)}\} $ %based on truncation at height $k$.
		%M\geq \|u\|_\infty,  
		we first observe that $\Sg(u)$ is such that $\Sg(u)\setminus \Omega^\infty = \Sg(\varphi)\setminus \Omega^\infty,$  and $ \Sg(\psi, A) \subset \Sg(u)$. Moreover, 
		$\Per_s(\Sg(u) , \Omega^M)<\infty$, see \cite[Proposition 2.12]{CozziLombardini}, so that $E=\Sg(u)$ is an admissible candidate as a solution to the geometric obstacle problem.  
		
		We consider a competitor $F\subset\R^{n+1}$ such that~$F\setminus\Omega^M=\Sg(u) \setminus\Omega^M$
		and~$\Sg(\psi,A)\subset F$. Without loss of generality, we can further assume that $\Per_s(F,\Omega^M)<\infty$. Our choice of $M$ ensures that \cite[(1.16)]{CozziLombardini} is satisfied, i.e.
		\eqlab{\label{bmm} \Omega \times (-\infty, -M) \subset F \cap \Omega^\infty \subset \Omega \times (-\infty, M).} We can thus apply \cite[Theorem 1.9]{CozziLombardini}, rearranging $F$ in the vertical direction to obtain $\Sg(w_F)$ with 
		\[ w_F (x) = \lim_{R\to +\infty} \left(\int_{-R}^R \chi_{F}(x,t) \, dt -R\right),\]
		obtaining that
		\[ \Per_s(F, \Omega^M) \geq \Per_s(\Sg(w_F), \Omega^M).\]
		We also observe that $w_F\in \B_M \W^s_{\varphi, \psi}(\Omega, A)$, precisely $\|w_F\|_{L^\infty(\Omega)} \leq M$ follows from \eqref{bmm}, \cite[Proposition 2.12]{CozziLombardini} yields $w_F|_\Omega \in W^{s,1}(\Omega)$, $w_F=\varphi$ outside of $\Omega$ since $F\setminus \Omega^M = \Sg(u) \setminus \Omega^M$  and finally $\Sg(\psi, A) \subset F$ implies that $w_F\geq \psi$ in $A$. \\
		Since $u$ is a solution to the obstacle problem, we obtain the conclusion of the theorem using \cite[Proposition 2.12]{CozziLombardini}. Indeed,
		\[ \Per_s(\Sg (u), \Omega^M)= \F_s^M(u, \Omega) + \kappa_{\Omega, M} \leq \F_s^M(w_F, \Omega) + \kappa_{\Omega, M}  = \Per_s(\Sg(w_F), \Omega^M). \qedhere\]
	\end{proof}
	
	\begin{remark} We point out that the conclusion of Proposition \ref{equiv} holds under more general assumptions: if $u\in \W^s_{\varphi, \psi}(\Omega, A)$ solves the obstacle problem and $u\in L^\infty(\Omega)$ then $\Sg(u)$ solves the geometric obstacle problem. The existence and boundedness of a function $u$ solving the obstacle problem are ensured by, example given, Theorem \ref{Dirichlet_obstacle} and Proposition \ref{nui}. In particular, instead of requiring $\varphi$ to be bounded in $ \Omega_{\Theta \diam (\Omega)} \setminus\Omega$ we can require  that $\varphi$ has integrable tail, i.e. $\Tail_s(u, \Omega_{\Upsilon \diam(\Omega)} \setminus \Omega, \cdot) \in L^1(\Omega)$, and $\varphi \in L^\infty(\Omega_d\setminus \Omega)$ for some $d>0$.
	\end{remark}
	
	We note that the subgraph of~$u$ is actually the unique solution to the geometric obstacle problem.
	
	\begin{prop}
		\label{equiv1}	Let~$n\ge1$,~$s\in(0,1)$,~$\Omega\subset\R^n$ be a bounded open set with~$C^2$ boundary.
		Let~$A\subset\Omega$ be an open set  such that either~$A=\Omega$ or~$A\subset\subset\Omega$ with $C^2$ boundary. Let ~$\psi\in L^\infty(A)$ and~$\varphi:\R^n\to\R$  be such that~$\varphi\in %L^\infty(B_{R_s}\setminus\Omega)
		L^\infty_{loc}(\Rn)$.
		%with~$R_s$ as defined above. 
		Let~$u\in\W^s_{\varphi,\psi}(\Omega, A)$ be the unique solution of the obstacle problem, as in
		Theorem~\ref{Dirichlet_obstacle}. 
		Then~$\Sg(u)$ is the unique solution of the geometric obstacle problem.
	\end{prop}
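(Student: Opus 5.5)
We already know from Proposition \ref{equiv} that $\Sg(u)$ solves the geometric obstacle problem, so only uniqueness remains. Let $E$ be any solution of the geometric obstacle problem with exterior data $\Sg(\varphi)\setminus\Omega^\infty$ and obstacle $\Sg(\psi,A)$. The strategy is in three steps: show that $E$ is trapped in a horizontal slab inside $\Omega^\infty$; rearrange it vertically into a subgraph; and then use the uniqueness of the functional minimizer together with strict inequality in the rearrangement.

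\emph{Step 1: a priori confinement of $E$ in $\Omega^\infty$.} We look for $M_0>0$, depending only on $\Omega$, $\|\psi\|_{L^\infty(A)}$ and the local bound of $\varphi$, such that
\[\Omega\times(-\infty,-M_0)\subset E\cap\Omega^\infty\subset\Omega\times(-\infty,M_0)\]
up to null sets. This is the analogue of \cite[(1.16)]{CozziLombardini}. We compare $E$ with $E\cap\{x_{n+1}<k\}$ and with $E\cup(\Omega\times(-\infty,-k))$, for $k$ above $\|\psi\|_{L^\infty(A)}$ and above $\sup\varphi$ on a large neighbourhood of $\Omega$. Both comparison sets remain admissible, because the obstacle lies below height $\|\psi\|_\infty$ and the truncation only removes or adds mass far from the obstacle.

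To show that these truncations strictly decrease $\Per_s(\cdot,\Omega^M)$ unless nothing is removed, we plan to use a sliding argument with half-spaces or large balls together with the Euler--Lagrange information of Remark~\ref{geom_obst_locmin_remark}. Concretely, we slide the half-space $\{x_{n+1}>t\}$ downward from $t=+\infty$. At a first touching point lying above height $\max\{\|\psi\|_\infty,\sup\varphi\}+\diam\Omega$, the set $E$ is locally $s$-minimal, since the point is away from the obstacle, and it has an exterior tangent ball. Hence $H_s[E]=0$ there, or $H_s[E]\le0$ if the point lies on $\partial\Omega^\infty$, which is item (2) of the remark and uses the $C^2$ boundary of $\Omega$. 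On the other hand, the exterior data lying below yields $H_s[E]>0$ at that point, a contradiction. This is where the $C^2$ hypotheses on $\Omega$ and $A$ enter. The bound from below is symmetric, and there the obstacle only helps. I expect this step to be the main obstacle: the touching may happen at $\partial\Omega^\infty$ or at the boundary of the obstacle, and the viscosity statements must be applied carefully. If the sliding argument becomes too delicate, I would instead adapt the energy-truncation argument of \cite[Proposition 3.5]{CozziLombardini}, which compares $E$ with $E\cap\{x_{n+1}<N\}$ directly.

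\emph{Step 2: rearrangement.} Once $E$ satisfies the slab condition with $M\ge M_0$, we apply \cite[Theorem 1.9]{CozziLombardini} to obtain $w_E$ with
\[\Per_s(\Sg(w_E),\Omega^M)\le\Per_s(E,\Omega^M).\]
Exactly as in the proof of Proposition~\ref{equiv}, $w_E\in\B_M\W^s_{\varphi,\psi}(\Omega,A)$, and by \cite[Proposition 2.12]{CozziLombardini}
\[\F_s^M(w_E,\Omega)+\kappa_{\Omega,M}\le\Per_s(E,\Omega^M)\le\Per_s(\Sg(u),\Omega^M)=\F^M_s(u,\Omega)+\kappa_{\Omega,M}.\]
The middle inequality holds because $E$ is a minimizer and $\Sg(u)$ is admissible; to ensure this, take $M\ge\|u\|_\infty$ as well, so that both sets agree outside $\Omega^M$. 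By Remark~\ref{trunc} and the uniqueness in Theorem~\ref{Dirichlet_obstacle}, we get $w_E=u$, and all the inequalities above are equalities.

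\emph{Step 3: equality in the rearrangement.} It remains to show that $\Per_s(\Sg(w_E),\Omega^M)=\Per_s(E,\Omega^M)$ forces $E=\Sg(w_E)$ up to null sets. We plan to invoke the equality case of \cite[Theorem 1.9]{CozziLombardini}, which is strict unless $E$ is already a subgraph. If only the inequality is available there, we instead redo the rearrangement pointwise. The rearrangement is a sum over pairs of vertical lines of one-dimensional rearrangements of the kernel interaction, and for each pair the interaction strictly decreases unless the vertical sections are half-lines, by a Riesz-type strict rearrangement inequality for the strictly decreasing kernel. Since the exterior data are subgraphs, this gives $E=\Sg(u)$.
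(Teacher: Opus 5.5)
Your Steps 2 and 3 are essentially what the paper does once the slab condition $\Omega\times(-\infty,-M_0)\subset E\cap\Omega^\infty\subset\Omega\times(-\infty,M_0)$ is known. There, \cite[Theorem 1.9]{CozziLombardini}, with its strict inequality for non-subgraphs combined with the minimality of $E$, gives $E=\Sg(w_E)$, and uniqueness of the minimizer of $\F^M_s$ forces $w_E=u$. The genuine gap is in Step 1. That is the only substantial part of the proof, and the mechanism you propose there does not work.

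\textbf{Why the half-space sliding fails.} Sliding $\{x_{n+1}>t\}$ down from $t=+\infty$ needs a starting position in which this half-space, or at least its part over $\overline\Omega$, is disjoint from $E$. That is exactly the upper bound you want to prove. If $E\cap\Omega^\infty$ has points at arbitrarily large heights, there is no first contact point and no contradiction. Moreover, since $\varphi$ is only in $L^\infty_{loc}(\R^n)$, in general no half-space $\{x_{n+1}>t\}$ is disjoint from the exterior datum $\Sg(\varphi)\setminus\Omega^\infty$.

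\textbf{The same circularity affects your alternatives.}
\begin{itemize}
\item The truncation $E\cap\{x_{n+1}<k\}$ is not an admissible competitor in any $\Omega^M$ unless you already know that $E$ is empty in $\Omega\times(M,+\infty)$. It also alters $E$ outside $\Omega^\infty$ wherever $\varphi>k$.
\item The fallback through \cite[Proposition 3.5]{CozziLombardini} is a statement about functions. You would first have to rearrange $E$ into a subgraph, and \cite[Theorem 1.9]{CozziLombardini} allows this only under the slab condition itself.
\end{itemize}

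\textbf{The missing idea} is that of \cite[Lemma 3.3]{graph}, which the paper adapts.
\begin{itemize}
\item Take a ball $\ball_R(x,t)$ with $x$ far outside $\Omega$. Then it lies in the known region $\Co\Sg(\varphi)\setminus\Omega^\infty$ for all $t\ge t_0$; this uses only local boundedness of $\varphi$.
\item Take $t_0\ge\|\psi\|_{L^\infty(A)}+2R$, so the ball never meets the obstacle.
\item Slide the ball \emph{horizontally} through $\overline\Omega\times\R$. Since the ball is bounded and starts in the exterior of $E$, a first contact point with $\partial E\cap(\overline\Omega\times\R)$ exists as soon as some translate meets $\overline E$.
\item For $R$ and $t_0$ large, the fractional mean curvature at that point is strictly positive. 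This contradicts $H_s[E]=0$ in $\Omega^\infty$, respectively $H_s[E]\le0$ on $\partial\Omega\times\R$, from Remark~\ref{geom_obst_locmin_remark}.
\end{itemize}
The lower bound uses balls contained in $E$ and lying deep below. There the contact point may also lie on $\partial A\times\R$, and one needs the last statement of Remark~\ref{geom_obst_locmin_remark}, which is where $\partial A\in C^2$ enters. So the obstacle does not merely ``help''.
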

	\begin{proof}
		By Proposition \ref{equiv} we have that $\Sg(u)$ solves the geometric obstacle problem. Let $E\subset\R^{n+1}$ be a solution of the geometric obstacle problem. 
		
		We claim that there exists $M_0=M_0(n,s,\Omega,\varphi,A,\psi)>0$ such that 
		\eqlab{ \label{bdd}
			\Omega\times(-\infty,-M_0)\subset E\cap\Omega^\infty
			\subset\Omega\times(-\infty,M_0).
		}
		Once this holds, we use \cite[Theorem 1.9]{CozziLombardini} to conclude that $E=\Sg(w_E)$ and $w_E\in \B \W^s_{\varphi, \psi}(\Omega, A) $ with $\|w_E\|_{L^\infty(\Omega) }\leq M_0$. Let $M \geq \max\{M_0, \|u\|_{L^\infty(\Omega)}\}$ then by minimality $E=\Sg(w_E)$ and using \cite[Proposition 2.12]{CozziLombardini} 
		\bgs{ \F_s^M(w_E,\Omega) = \Per_s(E, \Omega^M) - k_{\Omega,M} \leq \Per_s(\Sg(u),\Omega^M) - k_{\Omega,M} =\F_s^M(u, \Omega).
		}
		Since $u$ is the only minimizer of $\F_s^M$ in $\W^s_{\varphi, \psi}(\Omega, A)$, we conclude that $w_E=u$ almost everywhere in $\Rn$, hence $E=\Sg(u)$. 
		
		To prove \eqref{bdd} we can use the same argument of \cite[Lemma 3.3]{graph}: therein the authors begin with a ball $B$ which sits above (or below) $\varphi$, outside of $\Omega^\infty$, with radius $R$ large enough and center $X=(x, t)$ for $x\in \Co \Omega$ for all $t\geq t_0$ (or $t\leq - t_0$) for some $t_0$ large enough (depending on the exterior datum, $R, s$ and $\Omega$). Then they slide the ball horizontally until they first reach a contact point with $\partial E \cap \left(\overline \Omega\times \R\right)$. \\
		We can do the same by taking care of staying above (or below) the level $\|\psi\|_{L^\infty(\A)}$ (or $-\|\psi\|_{L^\infty(\A)}$) while sliding the ball. We thus need to take $t_0$ greater also than $\|\psi\|_{L^\infty(A)} +2R$. 
		
		At the contact point $X_0\in \partial E \cap \left(\overline \Omega \times \R\right)$, they prove that for $R$ large enough, the mean curvature is strictly positive if the starting ball stays above $\varphi$ (or the mean curvature is strictly negative if the starting ball stays below). This gives a contradiction with the Euler-Lagrange equation, i.e.  if the point is in $\Omega\times \R$ then $H_s[E](X_0)=0$, while if the point is on $\partial \Omega \times \R$ then $H_s[E](X_0)\leq 0$ if the starting ball stays above, and the opposite where the starting ball stays below $\varphi$. \\
		In the same way, we obtain a contradiction by using Remark \ref{geom_obst_locmin_remark}, taking into account that in addition, when we start with a ball "below",  the touching point might be on $\partial A\times \R$, in case $A \subset \subset \Omega$.
		
		This proves that there cannot be any touching points in $\overline \Omega\times \R$ in the strips $(-\infty, -t_0+R)$ and $(t_0-R, +\infty)$, which prove the claim \eqref{bdd}.
	\end{proof}
	
	\section{Proof of asymptotics of minimizers as $s\to0$} \label{two}
	
	%\subsection{State of the art and known results}\label{art}\quad \\

	\subsection{Preliminary results. Sets with positive  mean curvature}

	In this section we study the asymptotics as $s\to $ of a sequence of $s$-minimal functions
	$u_s\in\W_{\varphi, \psi}^s(\Omega,A)$.

	We underline that the set function $\bar\alpha$ introduced in \eqref{baralpha1} comes up also in
	the asymptotics of the fractional mean curvature as~$s\to0^+$, see \cite[Theorem 1.1]{BucurLombardini}. If $P\in\partial E$ and $\partial E$ is smooth around $P$, then 
	\bgs{ \liminf_{s\to0^+} s\,H_s[E](P) =\omega_{n+1} -2 \overline \alpha(E).}
	As a matter of fact, when $E$ is bounded, $\alpha(E)=0$ and the limit for $s\searrow 0$ of the fractional mean curvature is $\omega_{n+1},$ as already observed in
	Appendix~B in \cite{senonlocal}. This observation is interesting also in view of our stickiness results: if $\partial E$ is smooth around $Q$ and if $\overline \alpha(E_0) <\omega_{n+1}/2 $, then for $s$ small enough $H_s[E](Q)$ remains strictly positive and we obtain a contradiction with the Euler-Lagrange equation. We prove next that the mean curvature of $\partial E$ is actually strictly positive at all points that allow an exterior tangent ball of some radius $\delta>0$, for all $s$ under a certain threshold depending on $\delta$.

	\begin{theorem}\label{positivecurvature}
		Let $\Omega\subset\Rn$ be a bounded open set.  Let $ E_0 \subset \Co\Omega^\infty$ 
		%\varphi:\R^n\lra\R$ be such that
		%$$\varphi\in L^\infty_{loc}(\Co\Omega)\quad\mbox{ and }\quad
		be such that \[ \overline{\alpha}(E_0)
		<\frac{\omega_{n+1}}{2}\] and let $E\subset \R^{n+1}$ be such that \[E\setminus \Omega^\infty=E_0.\]
		We define 
		\eqlab{\beta=\beta(E_0):=\frac{\omega_{n+1}-2\overline \alpha(E_0)}4 \qquad \mbox{ and } \quad \qquad \label{delta_wild_index_def}
			\delta_s=\delta_s(E_0):=e^{-\frac{1}{s}\log \frac{\omega_{n+1}+2\beta}{\omega_{n+1}+\beta}} ,}
		for every $s\in(0,1)$.
		Then, for any $k \in \N\setminus\{0\} $ there exists $s_k=s_k(E_0,\Omega)\in(0,\frac{1}{2}]$ such that, if $E$ has an exterior tangent ball
		of radius (at least) $\delta_\sigma$, for some $\sigma\in(0,s_k]$, at some point $Q\in\partial E\cap{\left(\overline\Omega\times[-k,k]\right)}$, then
		\eqlab{\label{unif_pos_curv}\liminf_{\rho\to0^+}H_s^\rho[E](Q)\geq\frac{\beta}{s}>0,\qquad\forall\,s\in(0,\sigma].}
		\end{theorem}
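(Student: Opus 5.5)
The plan is to split the truncated curvature at $Q$ into a near part, controlled by the exterior tangent ball, and a far part, controlled by $\overline\alpha(E_0)$, and to show that the far part contributes asymptotically $(\omega_{n+1}-2\overline\alpha(E_0))/s\ge 4\beta/s$. Write $Q=(q,q_{n+1})$ with $|q_{n+1}|\le k$. Let $\ball_{\delta}(P)\subset\Co E$ be the exterior tangent ball at $Q$, with $\delta=\delta_\sigma\ge\delta_s$ for $s\le\sigma$; we may shrink it to radius exactly $\delta_s$, still tangent at $Q$. Fix a large radius $R_k$, depending only on $k$ and $\diam\Omega$, such that $\ball_{R_k}(Q)\supset\overline\Omega\times[-k,k]$ neighbourhoods and $\ball_1\subset\ball_{R_k/2}(Q)$ for every admissible $Q$. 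For $\rho<\delta_s$ we decompose
\[
H^\rho_s[E](Q)=\int_{\ball_{\delta_s}(Q)\setminus\ball_\rho(Q)}+\int_{\ball_{R_k}(Q)\setminus \ball_{\delta_s}(Q)}+\int_{\Co\ball_{R_k}(Q)}\frac{\chi_{\Co E}-\chi_E}{|Y-Q|^{n+1+s}}\,dY .
\]

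\emph{Near part.} This is the standard estimate for a set with an exterior tangent ball. Reflecting the tangent ball $\ball_{\delta_s}(P)$ through the tangent hyperplane at $Q$ produces a symmetric configuration. In $\ball_{\delta_s}(Q)$ the set $\Co E$ contains the tangent ball, so the integrand is $+1$ there. By symmetry the contributions from the ball and its reflection cancel against the corresponding part of $E$, leaving $\chi_{\Co E}-\chi_E\ge -1$ only on the lens region between the two balls. Integrating over that lens (of width $\approx|y'|^2/\delta_s$ at horizontal distance $|y'|$) gives
\[
\liminf_{\rho\to 0}\int_{\ball_{\delta_s}\setminus\ball_\rho}\ge -C_n\int_0^{\delta_s}\frac{r^{n}/\delta_s}{r^{n+1+s}}\,dr\cdot r^{\,?}\ \ge -\frac{C_n}{1-s}\,\delta_s^{-s},
\]
and the $\liminf$ is finite. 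Since $\delta_s^{-s}=\frac{\omega_{n+1}+2\beta}{\omega_{n+1}+\beta}$ is a constant $c_\beta$ independent of $s$, and $s\le 1/2$, this term is $\ge -C$ with $C=C(n,\beta)$.

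\emph{Middle annulus.} Using $\chi_{\Co E}-\chi_E=1-2\chi_E$ we get
\[
\int_{\ball_{R_k}\setminus\ball_{\delta_s}}\ge \int_{\ball_{R_k}\setminus \ball_{\delta_s}}\frac{1-2\chi_E}{|Y-Q|^{n+1+s}}\,dY\ge -\frac{\omega_{n+1}}{s}\big(\delta_s^{-s}-R_k^{-s}\big).
\]
This bound is too crude on its own, so the two outer pieces should be combined. The right choice is
\[
\int_{\Co\ball_{\delta_s}(Q)}\frac{1-2\chi_E}{|Y-Q|^{n+1+s}}=\frac{\omega_{n+1}}{s}\delta_s^{-s}-2\int_{\Co\ball_{\delta_s}(Q)}\frac{\chi_E}{|Y-Q|^{n+1+s}} .
\]
For the last integral, split at $R_k$. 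Inside, bound it by $\frac{\omega_{n+1}}{s}(\delta_s^{-s}-R_k^{-s})$. Outside, $E$ coincides with $E_0$ up to the cylinder $\Omega^\infty\cap\Co\ball_{R_k}(Q)$. A crude bound for the cylinder part is $\int_{|t|>R_k/2}|\Omega|\,|t|^{-n-1-s}\,dt\le C(\Omega)$, which is not multiplied by $1/s$. Then
\[
\int_{\Co\ball_{R_k}(Q)}\frac{\chi_{E_0}}{|Y-Q|^{n+1+s}}\le (1+o(1))\int_{\Co\ball_1}\frac{\chi_{E_0}}{|Y|^{n+1+s}}+C,
\]
since $|Y-Q|/|Y|\to1$ at infinity uniformly in $|Q|\le C_k$, and a fixed compact region costs only $O(1)$. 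By definition of $\overline\alpha$, for $s\le s_k$ this is $\le(\overline\alpha(E_0)+\beta/4)/s$.

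\emph{Collecting terms.} This is the main obstacle: the inner annulus must not double count. I would therefore not bound $\chi_E\le1$ on the whole $\ball_{R_k}\setminus\ball_{\delta_s}$, but keep $\chi_{\Co E}-\chi_E\ge-1$ there, giving $-\frac{\omega_{n+1}}{s}(\delta_s^{-s}-R_k^{-s})$. Together with the exterior estimate $\ge \frac{\omega_{n+1}R_k^{-s}}{s}-\frac{2(\overline\alpha+\beta/4)}{s}-C$, this yields
\[
sH^\rho_s\ge \omega_{n+1}\big(2R_k^{-s}-\delta_s^{-s}\big)-2\overline\alpha-\beta/2-Cs .
\]
As $s\to0$ we have $R_k^{-s}\to1$ and $\delta_s^{-s}=c_\beta$, so this tends to
\[
\omega_{n+1}-2\overline\alpha-\frac{\beta}{2}-\omega_{n+1}(c_\beta-1)\ \ge\ 4\beta-\frac{\beta}{2}-\beta>\beta,
\]
using $\omega_{n+1}(c_\beta-1)=\omega_{n+1}\beta/(\omega_{n+1}+\beta)<\beta$. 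This precise balance is exactly why $\delta_s$ is defined as it is. Finally, choose $s_k$ small enough, depending only on $k,\Omega,E_0$ and not on $Q$ or $\sigma$, so that all the $o(1)$ terms are absorbed; this gives \eqref{unif_pos_curv} for every $s\le\sigma\le s_k$.
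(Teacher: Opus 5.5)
Your argument is correct and is essentially the paper's proof. It uses the tangent-ball/lens estimate near $Q$ together with $\chi_{\Co E}-\chi_E\ge -1$ on the annulus, the inclusion $E\setminus\ball_R(Q)\subset E_0\cup\Omega^\infty$ with a $Q$-uniform $\limsup$ bound for $E_0$, and the choice of $\delta_s$ making $\delta^{-s}\le\frac{\omega_{n+1}+2\beta}{\omega_{n+1}+\beta}$. The differences are cosmetic: you shrink the ball to radius $\delta_s$ instead of using $\delta_\sigma^{-s}\le\delta_\sigma^{-\sigma}$; you bound the cylinder term directly by $|\Omega|\int_{|t|>R_k/2}|t|^{-n-1-s}\,dt=O(1)$ rather than via $\overline\alpha(\Omega^\infty)=0$; your ``double counting'' worry is moot since both bookkeepings of the annulus coincide; and the lens integrand should read $\frac{r^{n+1}/\delta_s}{r^{n+1+s}}$, not $\frac{r^{n}/\delta_s}{r^{n+1+s}}\cdot r^{?}$.
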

	
	\begin{proof}The proof follows as in \cite[Theorem 1.2]{BucurLombardini} with some adaptation.  
		Let 
		\[ R=3 \max\{1,k,\mbox{diam}(\Omega)\} \] 
		and suppose that $E$ has an exterior tangent ball of radius $\delta<R/2$ at $Q\in\partial E\cap{\left(\overline\Omega\times[-k,k]\right)}$, that is
		\[\ball_\delta(P)\subset\Co E\quad\textrm{and}\quad Q\in\partial \ball_\delta(P)\]
		for some $P\in \R^{n+1}$. % using that $\overline \Omega^k \subset B_R(Q)$. 
		Then for $\rho<\delta/2$ small enough we have that
		\bgs{\label{uno} H^\rho_s[E](Q)
			= \int_{\ball_R(Q)\setminus \ball_{\rho}(Q)} \frac{\chi_{ \Co E} (Y)-\chi_{E}(Y)}{|Q-Y|^{n+1+s}}\, dY
			+ \int_{\Co \ball_R(Q)} \frac{\chi_{ \Co E} (Y)-\chi_{E}(Y)}{|Q-Y|^{n+1+s}}\, dY.}
		Then, as in Theorem 1.2 in \cite{BucurLombardini} we get that
		\bgs{\int_{\ball_R(Q)\setminus \ball_{\rho}(Q)} \frac{\chi_{ \Co E} (Y)-\chi_{E}(Y)}{|Q-Y|^{n+1+s}}\, dY 
			\geq
			-{\delta^{-s}} \left(C_1+ \frac{\omega_{n+1}}s \right) +\frac{\omega_{n+1}}s R^{-s}, }
		for some $C_1>0$ which does not depend on $s$. 
		On the other hand we obtain that
		\bgs{ & \int_{\Co \ball_R(Q)} \frac{\chi_{ \Co E} (Y)-\chi_{E}(Y)}{|Q-Y|^{n+1+s}}\, dY  
			\\ = &\; \int_{\Co \ball_R(Q)}\frac{dY}{|Q-Y|^{n+1+s}}- 2\int_{\Co \ball_R(Q)}\frac{\chi_E(Y)}{|Q-Y|^{n+1+s}}dY
			\\ 
			= &\;	\frac{\omega_{n+1}}s R^{-s} -2 \int_{\Co \ball_R(Q)\cap  \Omega^\infty }\frac{\chi_{E}(Y)}{|Q-Y|^{n+1+s}}\, dY - 2 \int_{\Co \ball_R(Q)\setminus \Omega^\infty}\frac{\chi_{E}(Y)}{|Q-Y|^{n+1+s}}\, dY 	 
			\\
			\geq &\; \frac{\omega_{n+1}}s R^{-s} -2 \int_{\Co \ball_R(Q)}\frac{\chi_{\Omega^\infty}(Y)}{|Q-Y|^{n+1+s}}\, dY- 2 \int_{\Co \ball_R(Q)}\frac{\chi_{E_0}(Y)}{|Q-Y|^{n+1+s}}\, dY  ,}
		using that $\chi_{E} \leq\chi_{\Omega^\infty} $ in $\Co \ball_R(Q) \cap \Omega^{\infty}$ and that $E_0 \cap \ball_R(Q) \supset E_0 \cap (\ball_R(Q) \setminus \Omega^\infty)$.  Denoting for any set $A\subset \R^{n+1}$
		\[ \alpha_s(Q,R,A):=\int_{\Co \ball_R(Q) }\frac{\chi_{A}(Y)}{|Q-Y|^{n+1+s}}\, dY  ,\]
		we obtain
		\bgs{ \int_{\Co \ball_R(Q)} \frac{\chi_{ \Co E} (Y)-\chi_{E}(Y)}{|Q-Y|^{n+1+s}}\, dY 
			\geq &\;\frac{\omega_{n+1}}s R^{-s} - 2\alpha_s(Q,R,\Omega^\infty)   -2 \alpha_s(Q, R, E_0)\\
			\geq &\;\frac{\omega_{n+1}}s R^{-s} - 2 \sup_{Q\in \overline{\Omega^k} }\alpha_s(Q,R,\Omega^\infty)-2\sup_{Q\in \overline{\Omega^k}  }  \alpha_s(Q, R, E_0) .
		}
		We recall from \cite[Proposition 2.3]{BucurLombardini} that for any two disjoint sets $E,F\subset \R^{n+1}$, such that $\alpha(E),\alpha(F),$ \\$\alpha(E\cup F)$ all exist we have that
		\[ \alpha(E\cup F)=\alpha(E)+\alpha(F),\] while for any $E,F\subset \R^{n+1}$ such that for some $r>0,Q\in \R^{n+1} $, $E\setminus \ball_r(Q)\subset F\setminus \ball_r(Q)$, then
		\[\overline \alpha(E)\leq \overline \alpha(F).\]
		Notice that $\Omega^\infty\subset A_{R} :=\{ X=(x_1, \dots,x_{n+1}) \in \R^{n+1} \;\big|\;  |x_1-Q_1| < R\} $, then
		\bgs{ \overline \alpha(\Omega^\infty) \leq \overline \alpha(A_R) = \alpha (\{ X \in \R^{n+1} \;\big|\; x_1 \geq Q_1-R\} )- \alpha (\{ X \in R^{n+1} \;\big|\; x_1 \geq Q_1+ R\})=0,}
		since the contribution from infinity of a half-space is $\omega_{n+1}/2.$ 
		One proceeds as in Theorem 1.2 in \cite{BucurLombardini} to get the conclusion. Precisely putting together the above estimates one gets for all $s\in (0,1)$
		\bgs{ H_s^\rho[E](Q) \geq &\;  
			\frac{1}s \left[-{\delta^{-s}}  \left(C_1 s   + \omega_{n+1}\right)+ {\omega_{n+1}} R^{-s}\right. 
			\\&\;  
			\left. 
			+ \left(\omega_{n+1} R^{-s} -2s \sup_{Q\in \overline{\Omega^k}  }  \alpha_s(Q, R, E_0)\right)\right. \\
			&\; \left. - 2 s \sup_{Q\in \overline{\Omega^k}} \alpha_s(Q, \R, \Omega^\infty) \right], 
		}
		and recalling that $ R>1, R^s\nearrow 1$ as $s\to 0$,  
		and from \cite[Proposition 2.1]{BucurLombardini} that
		\[ \limsup_{s \to 0} s \sup_{Q\in \overline{\Omega^k}} \alpha_s(Q,R,E_0) = \overline \alpha(E_0),\] one finds $\sigma\in (0,1)$ small enough such that for all $s\in (0, \sigma]$ 
		\bgs{ &\; C_1 s \leq \beta, \qquad \omega_{n+1} R^{-s} \geq \omega_{n+1}-\beta/4, \qquad 2 s \sup_{Q\in \overline{\Omega^k}} \alpha_s(Q, \R, \Omega^\infty) \leq \beta/4, \\ &\; \omega_{n+1} R^{-s} -2s \sup_{Q\in \overline{\Omega^k}  }  \alpha_s(Q, R, E_0) \geq 7\beta/2.} 
		For $\delta_\sigma<1$ given in the hypothesis one gets,
		\[ H_s^\rho[E](Q) \geq \frac{1}{s} \left[ -\delta_\sigma^{-s}(\beta + \omega_{n+1} ) + \omega_{n+1} +3\beta \right] \geq \frac{1}{s} \left[ -\delta_\sigma^{-\sigma}(\beta + \omega_{n+1} ) + \omega_{n+1} +3\beta \right] =\frac{\beta}s,\] 
		thus the conclusion of the Theorem.
	\end{proof}

	%%%%%%%SECTION%%%%%
	
	\subsection{Main results}
	
	In this section, we prove the main result of this section. In particular, we prove that (for a suitable exterior data), when $s$ is small enough the coincidence set of $u_s$ must be the whole of $A$.
	As a consequence, we show that the function $u_s$
	need not be continuous across $\partial\Omega$ or across $\partial A$.

	\smallskip

	\begin{theorem}\label{asympt_obst}
		Let $\Omega\subset\R^n$ be a bounded and connected open set with $C^2$ boundary and let $\varphi:\R^n\lra\R$ be such that
		$$\varphi\in L^\infty_{loc}(\Rn)\quad\mbox{ and }\quad\overline{\alpha}\big(\Sg(\varphi)\big)
		<\frac{\omega_{n+1}}{2}.$$
		Let $A\subset\subset\Omega$ be a bounded open set (eventually empty) with $C^2$ boundary. Let also
		\begin{itemize}
			\item [a)]  $\psi\in  C^2(\overline A)$. 
		\end{itemize}
		or
		\begin{itemize}
			\item [b)]  $\psi\in C(\overline A)\cap C^2(A)$ be such that
			$$\mathfrak O:= \Omega^\infty\setminus \{(x,t)\in\R^{n+1}\,|\,x\in\overline A,\,t\le\psi(x)\}$$
			is an open set with $C^2$ boundary. 
		\end{itemize}
		Define
		$$k_0:=2+\lceil\max\{\|\psi\|_{L^\infty(A)},\|\varphi\|_{L^\infty(\Omega_1\setminus\Omega)}\}\rceil,$$
		where $\lceil\cdot \rceil$ is the ceiling function.
		For every $s\in(0,1)$ let $u_s\in\W^s_{\varphi,\psi}(\Omega, A)$ be the unique $s$-minimal function.
		Then, for every $k\ge k_0$ there exists $s_k\in(0,1)$ such that
		\eqlab{
			u_s\le-k\quad\mbox{ a.e. in }\Omega\setminus A\quad\mbox{ and }
			\quad u_s=\psi\quad\mbox{ a.e. in }A,
		}
		for every $s\in(0,s_k)$.
		In particular
		$$\lim_{s\to0}u_s(x)=-\infty,\quad\mbox{ uniformly in }x\in\Omega\setminus A.$$
	\end{theorem}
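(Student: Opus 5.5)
We work with the geometric formulation. By Propositions \ref{equiv} and \ref{equiv1}, $E_s:=\Sg(u_s)$ is the unique solution of the geometric obstacle problem, it contains $\Sg(\psi,A)$, it coincides with $E_0:=\Sg(\varphi)\setminus\Omega^\infty$ outside $\Omega^\infty$, and by Theorem \ref{Dirichlet_obstacle} it satisfies $\Omega\times(-\infty,-M_s)\subset E_s\cap\Omega^\infty\subset\Omega\times(-\infty,M_s)$. Since $\Sg(\varphi)\setminus\Omega^\infty\subset\Sg(\varphi)$, monotonicity of $\overline\alpha$ gives $\overline\alpha(E_0)<\omega_{n+1}/2$. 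Hence Theorem \ref{positivecurvature} applies: for each $k\ge k_0$ we obtain $s_k$ and the radius $\delta:=\delta_{s_k}$. The claim then reduces to a sliding-ball argument. We use the closed ball $\ball_\delta$ of radius $\delta$ and slide it from above inside the region
\[
\mathcal R_k:=\big(\overline{\Omega}\times[-k,\infty)\big)\setminus\{(x,t)\,|\,x\in\overline A,\ t\le\psi(x)\}.
\]
The goal is to show that for $s<s_k$ the set $E_s$ does not meet $\mathcal R_k$ (up to null sets). This gives $u_s\le -k$ in $\Omega\setminus A$ and $u_s\le\psi$ in $A$, and combined with $u_s\ge\psi$ the latter yields $u_s=\psi$ in $A$.

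\textbf{Step 1 (the sweep).} Start with balls centered at height $t\gg M_s$, which lie in $\Co E_s$. Move them continuously downward, and horizontally over $\overline\Omega$, keeping the whole ball inside the complement of $\Sg(\psi,A)$ and above the level $-k+1$. Suppose that $E_s\cap\mathcal R_k$ has positive measure. Then there is a first contact point $Q\in\partial E_s$, at which $E_s$ has an exterior tangent ball of radius $\delta_{s_k}$. The point $Q$ lies in $\overline\Omega\times[-k,k]$, because $\psi$ and $\varphi$ are bounded by $k_0-2$ near $\Omega$; if necessary one enlarges the height window in Theorem \ref{positivecurvature} to $[-k,M]$, noting that the constant $R$ there may depend on $k$ and on $\diam\Omega$ but not on $s$. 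Theorem \ref{positivecurvature} with $\sigma=s_k$ then gives $\liminf_\rho H^\rho_s[E_s](Q)\ge\beta/s>0$ for all $s\le s_k$.

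\textbf{Step 2 (contradiction with the Euler--Lagrange conditions of Remark \ref{geom_obst_locmin_remark}).} We distinguish the position of $Q$.
\begin{itemize}
\item If $Q\in\Omega^\infty\setminus\overline{\Sg(\psi,A)}$, then $H_s[E_s](Q)=0$.
\item If $Q\in\partial\Omega\times\R$, the exterior datum near $x_0$ lies below $t_0-1$, since $Q$ is at height at least $-k+1$ while $\varphi\le k_0-2$; hence $H_s\le0$. The case in which $Q$ is at low height with $\varphi$ above it does not occur, because the ball never goes below $-k+1$ and $\varphi\ge -k_0+2>-k+1$ there. More care is needed at this point: if $\varphi\ge t_0$ near $x_0$, the ball cannot actually touch there from inside unless $E_s$ is higher.
\item If $Q$ is on the graph of $\psi$ over $A$ or over $\partial A$, we only know $H_s\ge0$, which is not a contradiction. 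These points are excluded by construction, since touching only counts for points of $E_s$ strictly above the obstacle, where $E_s$ is locally minimal. This requires the regularity hypothesis a) or b), so that balls of radius $\delta$ fit tangentially along the graph of $\psi$ and along $\partial\mathfrak O$ and reach every point of $\mathcal R_k$. Here hypothesis b), that $\partial\mathfrak O$ is $C^2$, guarantees a uniform interior ball condition for $\mathfrak O$, and shrinking $s_k$ makes $\delta$ smaller than that radius and than the $C^2$ radius of $\partial\Omega$.
\end{itemize}

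\textbf{Main obstacle.} The main difficulty is Step 2 near the obstacle and near $\partial\Omega$. One must ensure that every point of $\mathcal R_k$ is covered by such admissible balls, which uses connectedness of $\Omega$ and the uniform ball condition. One must also ensure that the first contact is a genuine point of $\partial E_s$ in the region where an Euler--Lagrange inequality of the correct sign is available. Once these are settled, uniform convergence to $-\infty$ follows by letting $k\to\infty$.
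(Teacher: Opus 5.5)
Your proposal has a genuine gap in Step 1, and it is exactly the point the paper's proof is built around.

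\textbf{The contact height is not controlled.} If you start the balls at height $t\gg M_s$ and slide them down, the first contact can occur at any height up to $\sup_\Omega u_s$. You have no bound on this height that is uniform in $s$. The only a priori estimate, Theorem \ref{Dirichlet_obstacle}, involves $\Theta(n,s)$ and $\|\varphi\|_{L^\infty(\Omega_{\Theta\diam(\Omega)}\setminus\Omega)}$, and $\varphi$ is merely locally bounded.

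Your justification that $Q\in\overline\Omega\times[-k,k]$ ``because $\psi$ and $\varphi$ are bounded by $k_0-2$ near $\Omega$'' presupposes $u_s\le k_0$ in $\Omega$. That is part of what must be proved.

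Nor can you enlarge the window in Theorem \ref{positivecurvature} to $[-k,M_s]$. The threshold there comes from $R=3\max\{1,k,\diam\Omega\}$ and from $\limsup_{s\to0}s\sup_{Q}\alpha_s(Q,R,E_0)=\overline\alpha(E_0)$, taken over a fixed compact set of points $Q$. A window depending on $s$ therefore gives a threshold depending on $s$, which is circular.

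\textbf{The boundary case fails for the same reason.} From $t_0\ge-k+1$ and $\varphi\le k_0-2$ it does not follow that $\varphi\le t_0-1$ near $x_0$. At a contact on $\partial\Omega\times\R$ below the level of the exterior datum you have at best $H_s\ge0$, so no contradiction is available. Moreover, a ball centered over $\overline\Omega$ near $\partial\Omega$ at such heights may already intersect $\Sg(\varphi)\setminus\Omega^\infty$.

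\textbf{The missing idea} is to first empty a strip at a fixed height by entering \emph{horizontally from outside the cylinder}.
\begin{enumerate}
\item Fix a geometric radius $\delta\le\min\{r_0(\Omega),r_0(A),\frac14,\frac{d(\partial A,\partial\Omega)}4,\dots\}$ and $s_0<s_{k_0}$ with $\delta_{s_0}\le\delta$.
\item Take the balls $\ball_\delta(q,y)$ with $q\in\partial\Omega_\delta$ and $y\in(k_0-3\delta,k_0-\delta)$. They lie in the exterior of $\Sg(\varphi)$, because $\varphi\le k_0-2$ on $\Omega_1\setminus\Omega$. This is why $k_0$ only involves $\Omega_1\setminus\Omega$.
\item Slide them along horizontal segments into $\Omega^\infty$. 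Any first contact lies at height in $[k_0-1,k_0]$.
\item If the contact is interior, $H_s\le0$ by the Euler--Lagrange equation. If it is on $\partial\Omega^\infty$, then $\ball_{1/2}(p,z)\setminus\Omega^\infty$ is in the exterior of $\Sg(u_s)$, and the boundary Euler--Lagrange inequality of \cite{elsulbordo} again gives $H_s\le0$. Either way this contradicts Theorem \ref{positivecurvature} with $k=k_0$.
\item Since $\Sg(u_s)$ is a subgraph, this yields that $\overline\Omega\times[k_0-3\delta,\infty)$ is empty for all $s<s_0$.
\end{enumerate}

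Only now can you sweep downward, starting from the fixed height $k_0+\delta$.
\begin{itemize}
\item Over $\Omega\setminus\overline A$, use balls centered in $\Omega_{-\eps-\delta}\setminus\overline{A_{\eps+\delta}}$. These are compactly inside $(\Omega\setminus\overline A)\times\R$, so boundary contacts never arise.
\item Over $A$, use the tangent balls from Lemmas \ref{parab}--\ref{palla}, or radius $r_0(\mathfrak O)$ in case b).
\end{itemize}
All first contacts then lie in $\Omega^\infty\setminus\overline{\Sg(\psi,A)}$ at heights in $[-k-1,k_0+1]$, and the threshold $\min\{s_0,s_{k+1}\}$ is uniform in $s$.

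The rest of your outline matches the paper: contacts are taken strictly above the obstacle, and $u_s\le\psi$ together with $u_s\ge\psi$ gives $u_s=\psi$ in $A$.
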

	
	\begin{proof}
		We begin by proving point a) of the Theorem. Let 
		\[ \delta :=  \min\bigg\{r_0(A),r_0(\Omega),r_0(a), \frac14, \frac{d(\partial A, \partial \Omega)}{4}\bigg\},
		\]
		where $r_0(a)$ is given in Lemma \ref{palla}, with
		\[ a:= \frac{\|D^2\psi\|_{C^0(\overline A)}}4,\]
		and $r_0(A)$ and $r_0(\Omega)$ are as in Remark \ref{r0}.
		
		Let $\delta_s= \delta_s(E_0)$ and $s_k(E_0,\Omega)$ be as in Theorem \ref{positivecurvature}, with $E_0=\Sg(\varphi)\setminus \Omega^{\infty}$. Since $\delta_s\searrow 0$ as $s\to 0$, there exists $s_0\in (0, s_{k_0})$  small enough such that $\delta_{s_0} \leq \delta$. 
		\\
		
		\noindent \textbf{Step 1.} In the first step of the proof, we prove that $\Sg(u_s)$ is empty inside the strip $\overline \Omega \times (k_0-3\delta, +\infty)$. \\
		We begin by proving that $\Omega \times (k_0-3\delta, k_0-\delta) \subset\Sg(u_s)_{ext}$ for every $s<s_0$. The reader can see a sketch of this construction in Figure \ref{uno}.
		
		\noindent Suppose by contradiction that there exists a point laying in the subgraph  of $u_s$ and in the strip $\Omega \times (k_0-3\delta, k_0-\delta)$, more precisely
		\eqlab{
			\label{topo}
			\exists\; (x,y)\in \big(\Omega \times (k_0-3\delta, k_0-\delta)\big) \cap \overline{\Sg(u_s)}. 
		}
		We consider $\ball_{\delta}(q,y)$ for some $q\in \partial \Omega_{\delta}$. Notice that this ball is centered at a point with the coordinate $q$ that stays at distance $\delta$ from the boundary of $\Omega$, from the outside, and the same ``height'' as the considered point $(x,y)$. Recall also that $\Omega^\infty$ has a tangent ball of radius at least $r_0(\Omega)$ at all points on the ``walls'' of the infinite cylinder (given the smoothness of $\partial \Omega$). So, since $\delta\le r_0(\Omega)$, the ball $\ball_\delta(q,y)$
		is tangent from the outside to $\Omega^\infty$. Moreover, since $k_0\ge\|\varphi\|_{L^\infty(\Omega_1\setminus\Omega)}+2$, $y\ge k_0-3\delta$ and $\delta\le\frac14$,
		we have
		\eqlab{\label{castoro}
			\ball_\delta(q,y)\subset \Sg(\varphi)_{ext} \setminus\Omega^\infty=\Sg(u_s)_{ext}\setminus\Omega^\infty.
		}
		\begin{center}
			\begin{figure}[htpb]\label{uno}
				\hspace{0.9cm}
				\begin{minipage}[b]{0.9\linewidth}
					\centering
					\includegraphics[width=0.9\textwidth]{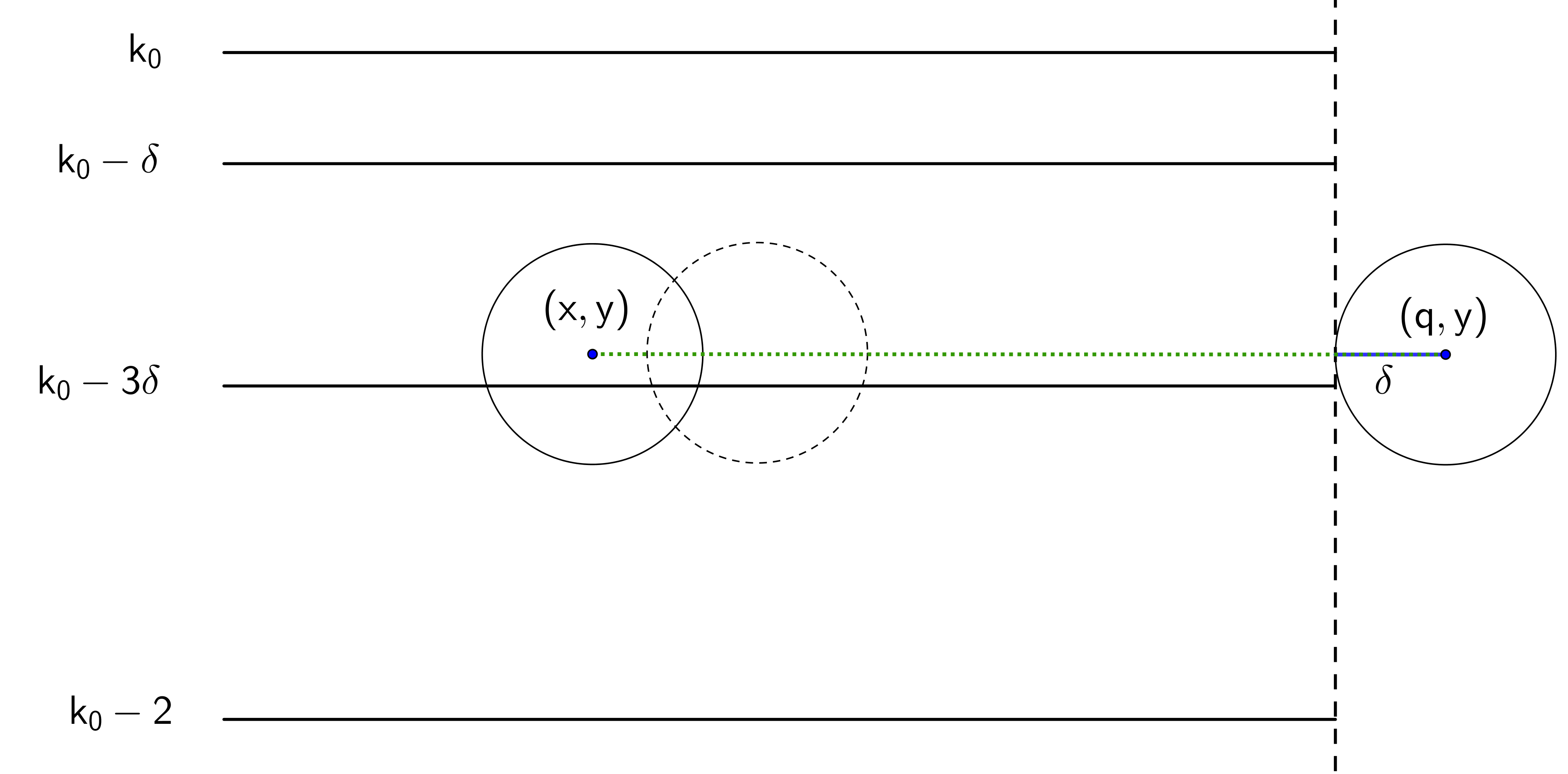}
					\caption{The construction to empty the strip $\Omega\times (k_0-3\delta, k_0-\delta)$}   
					%	\label{uno456}
					\label{uno}
				\end{minipage}
			\end{figure} 
		\end{center}
		
		We consider the segment
		\[
		\gamma:[0,1]\lra\R^{n+1},\qquad\gamma(t):=(tq+(1-t)x,y),
		\]
		which connects the point $(q,y)$ to $(x,y)$. 
		We ``slide the ball'' along the segment $\gamma(t)$, as follows. Let 
		\[
		t_0:= \sup\Big\{ \tau\in [0,1] \; \big | \; \bigcup_{t\in [0,\tau]} \ball_{\delta} (\gamma(t)) \subset \Sg(u_s)_{ext}\Big\}.
		\]
		Notice that, by \eqref{castoro}, the first ball is in the complement of the subgraph,
		\[
		\ball_\delta(\gamma(0))\subset\Sg(u_s)_{ext}
		\]
		and by assumption the last ball has to touch the subgraph, i.e.
		\[
		\ball_\delta(\gamma(1))\cap\overline{\Sg(u_s)}\not=\emptyset,
		\]
		hence $t_0\in[0,1)$. Arguing as in Lemma A.1 in \cite{BucurLombardini}, we get a tangency point between the ball centered at $\gamma(t_0)$ and the boundary of the subgraph of $u_s$, that is
		\[ 
		\ball_{\delta} (\gamma(t_0)) \subset \Sg(u_s)_{ext} \qquad \mbox{ and } \qquad \exists (p,z)\in \partial \ball_\delta(\gamma(t_0)) \cap \partial \Sg(u_s).
		\]  
		By definition of $\gamma$, we have that either $(p,z)\in \Omega^\infty$ or $(p,z)\in \partial \Omega^\infty$. 
		In the first case, by Theorem 5.1 in \cite{nms} we have that
		\eqlab{\label{criceto}
			H_s[\Sg(u_s)](p,z) \leq 0.
		}
		In the second case,  we observe that 
		\[
		z\geq k_0-4\delta\geq k_0-1 \qquad \mbox{ hence }  \qquad \ball_{\frac12}(p,z) \setminus \Omega^\infty \subset  \Sg(u_s)_{ext}.
		\]
		The hypothesis of Theorem 1.1 in \cite{elsulbordo} hold, so we still have the inequality \eqref{criceto}. On  the other hand, given that $z\in [k_0-1,k_0]$, $s_0\leq s_{k_0}$ and $\delta \geq \delta_{s_{0}}$, by Theorem \ref{positivecurvature} we have that
		\[ 
		H_s[\Sg(u_s)](p,z) >0,
		\]
		which gives a contradiction with \eqref{criceto}. 
		This proves that \eqref{topo} cannot hold. So 
		\[
		\Omega\times (k_0-3\delta, k_0-\delta) \subset \Sg(u_s)_{ext},
		\]
		We remark that since $(\Omega_1\setminus \Omega) \times (k_0-1,\infty) \subset  \Sg(u_s)_{ext}$, we actually have 
		\[
		\overline  \Omega\times [k_0-3\delta, k_0-\delta] \subset  \Sg(u_s)_{ext}.
		\]
		Furthermore, given that $\Sg(u_s)$ is a subgraph, we have that
		\[
		\overline \Omega \times [k_0-3\delta,+\infty) \subset  \Sg(u_s)_{ext}.
		\]
		
		\noindent \textbf{Step 2.} In the second step of the proof, we prove that $\Sg(u_s)$ is empty inside $\left(\Omega\setminus \overline A \right) \times[-k,k_0]$.\\ We take $k\geq k_0$ and we claim that for every $s\in (0,\min\{s_0,s_{k+1}\})$, where $s_{k+1}$ is given in Theorem \ref{positivecurvature}, we have that
		\eqlab{\label{rock} (\Omega_{-\eps} \setminus \overline {A_\eps})\times (-k,k_0) \subset  \Sg(u_s)_{ext}
		}
		for every $\eps \in (0, \frac{\delta}8)$. 
		Notice that, by the definition of $\delta_s$, we have that 
		\[ 
		\delta_s\leq \delta_{s_0}\leq\delta.
		\]
		Thus, by Theorem \ref{positivecurvature} we have that 	if $\Sg(u_s)$ has an exterior tangent ball
		of radius (at least) $\delta$ at some point $X_0 \in\partial \Sg(u_s) \cap{\left(\overline\Omega\times[-k-1,k_0+1]\right)}$, then
		\eqlab{\label{rocky} 
			\liminf_{\rho\to0^+}H_s^\rho[\Sg(u_s)](X_0)\geq\frac{\beta}{s}>0.
		}
		Now we prove \eqref{rock}. Suppose by contradiction that 
		\[ 
		\exists \; \eps \in \left(0, \frac{\delta}8\right) \mbox{ and } \exists \; (p,z)\in \Big( (\Omega_{-\eps} \setminus \overline {A_\eps})\times (-k,k_0 )\Big)\cap \overline {\Sg(u_s)}. 
		\] 
		\begin{center}
			\begin{figure}[htpb]
				\hspace{0.80cm}
				\begin{minipage}[b]{0.80\linewidth}
					\centering
					\includegraphics[width=0.80\textwidth]{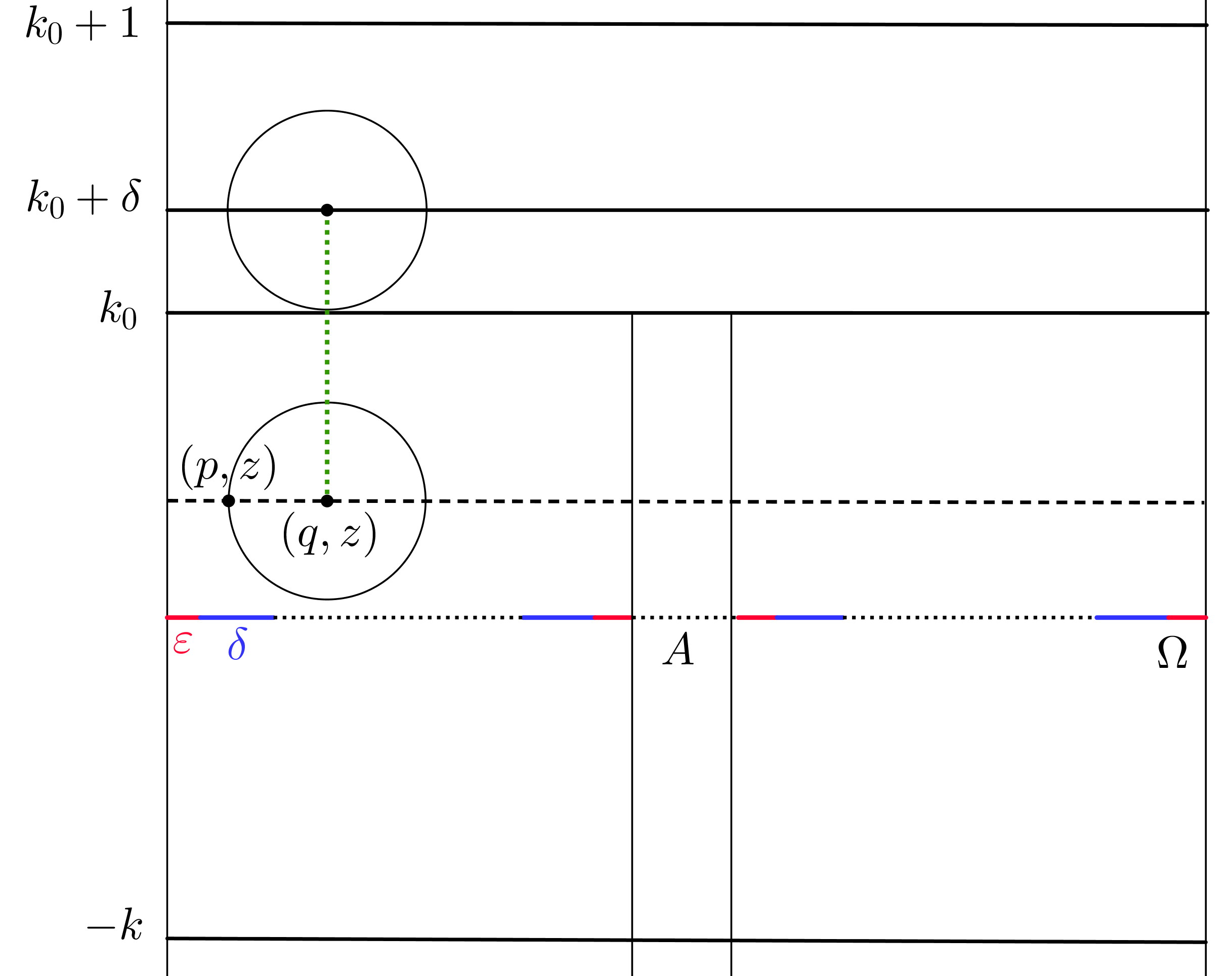}
					\caption{The construction to empty $\left(\Omega\setminus \overline A\right)  \times (-k, k_0)$}   
					\label{due}
				\end{minipage}
			\end{figure} 
		\end{center}
		In this case, we remark that $(p,z)\in \ball_\delta(q,z)$ for some $q\in \Omega_{-\eps-\delta } \setminus \overline {A_{\eps+\delta}}$ (see Figure \ref{due}). With this choice, we have that 
		\[
		B_{\delta}(q,t) \subset \subset \left(\Omega \setminus \overline A\right) \times (-k-1,k_0+1),\]
		for every $t\in [z,k_0+\delta]$.  Furthermore from the previous step of the proof we know that
		\[ 
		\ball_\delta(q,k_0+\delta) \subset \Sg(u_s)_{ext}.\]
		We take the segment connecting $(q,k_0+\delta)$ to $(q,z)$ and argue as in the first step of the proof, getting a contradiction between \eqref{rocky} and the Euler-Lagrange equation (Theorem 5.1 in \cite{nms}). This means that 
		\[
		\left( \Omega\setminus \overline A \right) \times[-k,k_0]\subset  \Sg(u_s)_{ext}.
		\]
		\begin{center}
			\begin{figure}[htpb]
				\hspace{0.77cm}
				\begin{minipage}[b]{0.77\linewidth}
					\centering
					\includegraphics[width=0.77\textwidth]{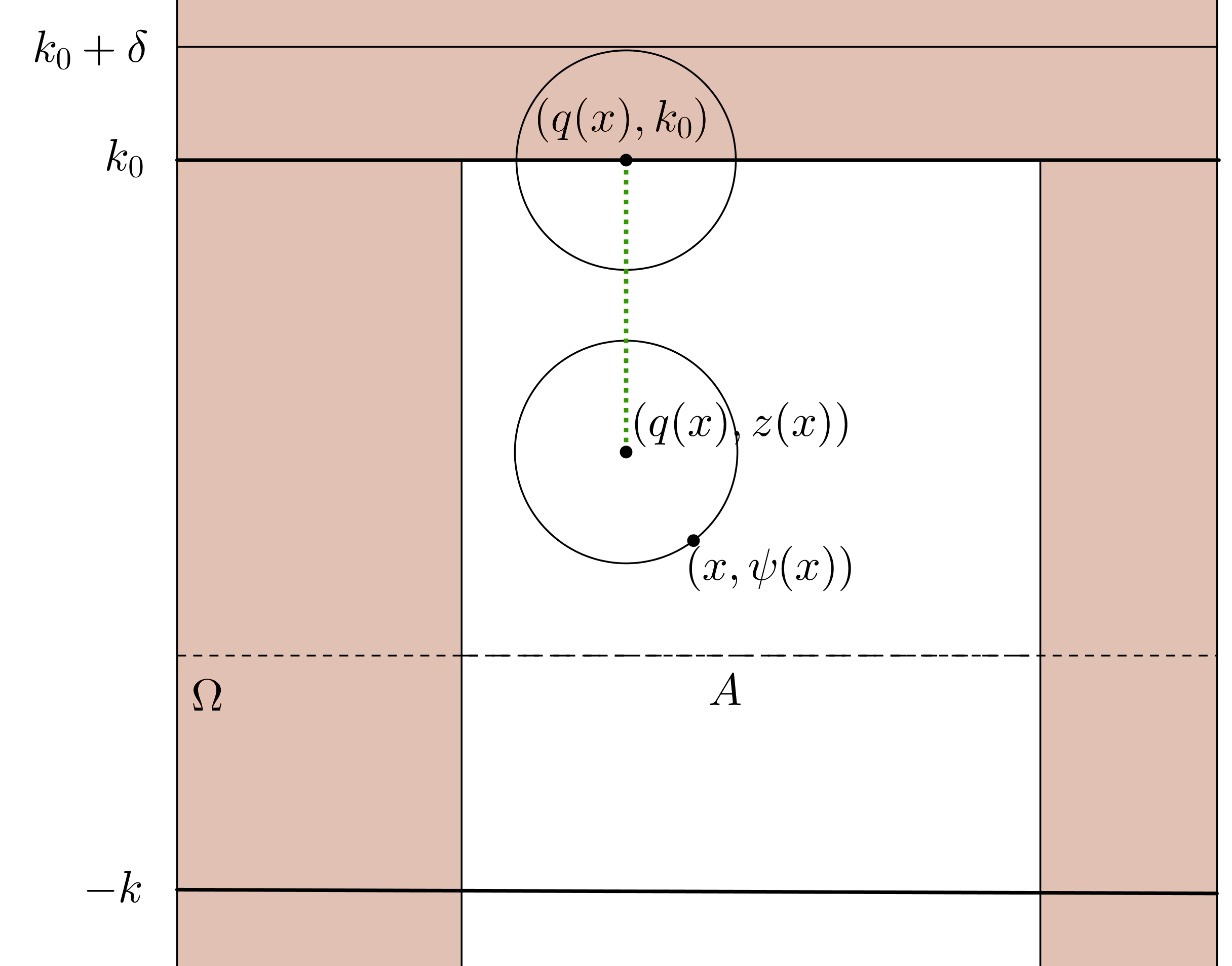}
					\caption{The construction to empty $\{ (x,y) \in \R^{n+1} \; \big| \; x\in\overline A,\, \psi(x)<y<k_0\}$}   
					\label{tre}
				\end{minipage}
			\end{figure} 
		\end{center}
		\noindent \textbf{Step 3.} 
		We are going to show in this third part of the proof that
		\eqlab{\label{sss} \{ (x,y) \in \R^{n+1} \; \big| \; x\in\overline A,\, \psi(x)<y<k_0\}\subset  \Sg(u_s)_{ext} 
		}(see Figure \ref{tre}).
		Notice that, since
		\[ \delta\leq r_0(a), \qquad \mbox{ with } \quad a= \frac{\|D^2 \psi\|_{C^0(\overline A)}}4
		\]
		by  Lemma \ref{parab} and  Lemma \ref{palla}, for every $x\in \overline A$, we can find a ball tangent to the graph of $\psi$ at $(x,\psi(x))$, i.e.
		\[B_\delta(q(x),z(x)) \subset \left(\Omega \times [-k,k_0]\right)  \setminus  \{ (x,y) \in \R^{n+1} \; \big| \; x\in \overline A, y \leq \psi(x) \} 
		\] 
		such that
		\[ (x, \psi(x))\in \partial B_\delta(q(x),z(x)).\]
		Then it is clear that
		\[B_\delta(q(x),y) \subset \left(\Omega \times [-k,k_0+\delta]\right)  \setminus  \{ (x,y) \in \R^{n+1} \; \big| \; x\in \overline A, y \leq \psi(x) \}  \qquad \mbox{ for every }  y\in[z(x), k_0].
		\]
		We prove \eqref{sss} by contradiction, supposing that
		\[ \exists \; (p,h)\in \R^{n+1}, \quad p\in \overline A, \; \psi(p)<h<k_0 \quad \mbox{ such that } (p,h)\in  \overline {\Sg(u_s)}.
		\]
		We consider the segment that connects $(q(p),k_0)$ to $(q(p),z(p))$ and ``slide the ball'' $B_{\delta}(q(p),k_0)$ as in the first part of the proof, until we find the first touching point on $\partial \Sg(u_s)$. Any such point provides a contradiction between the Euler-Lagrange theorem, and the positivity of the mean curvature (as given by Theorem \ref{positivecurvature}). This gives a proof of \eqref{sss} and concludes point a) of the Theorem.\\
		Point b) of the Theorem is proved in the same way, by taking the radius of the ball  smaller than $r_0(\mathfrak O)$, instead of $r_0(a)$. More precisely we consider
		\[ \delta :=  \min\bigg\{r_0(A),r_0(\Omega),r_0(\mathfrak O), \frac14, \frac{d(\partial A, \partial \Omega)}{4}\bigg\}.
		\]
		This concludes the proof of the Theorem.
	\end{proof}
	%\textcolor{blue}{Aggiungere distanza segnata e $r_0$ degli insieme $C2$.}
	
	A similar result holds for an everywhere defined obstacle, that is for the case $A=\Omega$, provided
	that the set
	$$\{(x,t)\in\Omega^\infty\,|\,t>\psi(x)\}$$
	is an open set with $C^2$ boundary.
	Then, there exists $s_0\in(0,1)$ such that
	$$u_s=\psi\quad\mbox{ a.e. in }\Omega,$$
	for every $s<s_0$, meaning that below some threshold $s_0$, the minimizers coincide with the obstacle.
	
	\begin{theorem}\label{all}
		Let $\Omega\subset\R^n$ be a bounded and connected open set with $C^2$ boundary and let $\varphi:\R^n\lra\R$ be such that
		$$\varphi\in L^\infty_{loc}(\Rn)\quad\mbox{ and }\quad\overline{\alpha}\big(\Sg(\varphi)\big)
		<\frac{\omega_{n+1}}{2}.$$
		Let also
		\begin{itemize}
			\item [a)] $\psi\in  C^2(\overline \Omega)$ and there exist $\varrho,a>0$
			such that $\varphi \in C^0(\Omega_{2\varrho}\setminus \Omega)$ and for every $x_0\in\partial\Omega$ it holds
			\eqlab{\label{paraboloid_cond}
				\varphi(x)\le\psi(x_0)+\nabla\psi(x_0)\cdot(x-x_0)+\frac{a}{2}|x-x_0|^2
				\quad \forall \;  x\in B_\varrho(x_0)\setminus\Omega,}
		\end{itemize}
		or
		\begin{itemize}
			\item [b)]   $\psi\in C(\overline \Omega)\cap C^2(\Omega)$ be such that
			$$\mathfrak O:=\{(x,t)\in\Omega^\infty\,|\,t>\psi(x)\}$$
			is an open set with $C^2$ boundary,
		\end{itemize}
		For every $s\in(0,1)$ let $u_s\in\W^s_{\varphi,\psi}(\Omega, \Omega)$ be the unique $s$-minimal function.
		Then there exists $s_0\in(0,1)$ such that
		\eqlab{
			u_s=\psi\quad\mbox{ a.e. in }\Omega,
		}
		for every $s\in(0,s_0)$.
	\end{theorem}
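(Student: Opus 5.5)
The plan is to repeat the sliding-ball argument of Theorem \ref{asympt_obst}. Since $A=\Omega$, the only region to be emptied is
\[
\mathfrak O=\{(x,t)\in\Omega^\infty\,|\,t>\psi(x)\},
\]
and the main care is needed near the walls $\partial\Omega\times\R$. Set $E_0=\Sg(\varphi)\setminus\Omega^\infty$, take $\beta$ and $\delta_s$ as in Theorem \ref{positivecurvature}, and put
\[
k_0:=2+\lceil\max\{\|\psi\|_{L^\infty(\Omega)},\|\varphi\|_{L^\infty(\Omega_1\setminus\Omega)}\}\rceil .
\]
Choose a radius $\delta\le\min\{r_0(\Omega),\tfrac14\}$, further reduced in case a) by $r_0(a)$ from Lemma \ref{palla} (with $a$ enlarged to also dominate $\|D^2\psi\|_{C^0(\overline\Omega)}$), and in case b) by $r_0(\mathfrak O)$. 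Then fix $s_0\le s_{k_0+1}$ so small that $\delta_{s_0}\le\delta$. Since $\Sg(\psi,\Omega)\subset\Sg(u_s)$, it suffices to show $\mathfrak O\cap\Omega^\infty\subset\Sg(u_s)_{ext}$ for $s<s_0$; this gives $u_s\le\psi$, hence $u_s=\psi$ a.e. in $\Omega$.

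\textbf{Step 1} is verbatim Step 1 of Theorem \ref{asympt_obst}: sliding balls of radius $\delta$ horizontally from outside the cylinder gives $\overline\Omega\times[k_0-3\delta,\infty)\subset\Sg(u_s)_{ext}$. At a touching point inside $\Omega^\infty$ we have $H_s\le0$ by \cite[Theorem 5.1]{nms}, at a touching point on the wall we have $H_s\le0$ by \cite{elsulbordo}, and both contradict Theorem \ref{positivecurvature}. Here $\psi<k_0-1$ guarantees the obstacle is never met.

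\textbf{Step 2} empties $\mathfrak O$ below height $k_0$ by sliding balls vertically downward. For each $x\in\overline\Omega$, Lemmas \ref{parab} and \ref{palla} (case a), or the $C^2$ regularity of $\partial\mathfrak O$ (case b), give a ball $\ball_\delta$ lying above the graph of $\psi$ and tangent to it at $(x,\psi(x))$. Translating this ball upward to height $k_0$ puts it inside the region already emptied in Step 1, and we then slide it down. At the first contact point $X_0\in\partial\Sg(u_s)$ the ball is an exterior tangent ball of radius $\delta\ge\delta_s$, so $H_s[\Sg(u_s)](X_0)\ge\beta/s>0$ by Theorem \ref{positivecurvature}. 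A contradiction then follows from Remark \ref{geom_obst_locmin_remark}: if $X_0$ is in the interior of $\Omega^\infty$ away from the obstacle, then $H_s=0$; if $X_0$ is on the wall strictly above $\varphi$ locally, then $H_s\le0$ by item (2), which needs $\psi(x_0)<t_0$. A contact point on the obstacle graph itself is harmless, since that only happens once the ball has reached $(x,\psi(x))$, which is exactly what we want.

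\textbf{The main obstacle} is the behaviour at $\partial\Omega$ where $\psi$ meets the wall. A sliding ball centred near $x_0\in\partial\Omega$ with its bottom close to $\psi(x_0)$ sticks out of the cylinder, and must not intersect $\Sg(\varphi)$ outside $\Omega$. This is the purpose of \eqref{paraboloid_cond} in case a). Choosing $a$ large enough and $\delta\le\min\{\varrho,1/a\}$, a ball of radius $\delta$ tangent from above to the paraboloid $\psi(x_0)+\nabla\psi(x_0)\cdot(x-x_0)+\tfrac a2|x-x_0|^2$ stays above that paraboloid on $B_\varrho(x_0)$, and hence above $\varphi$ outside $\Omega$. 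So these balls start in, and stay in, $\Sg(u_s)_{ext}$ outside the cylinder, and wall contact points cannot occur below the obstacle level. The continuity of $\varphi$ in $\Omega_{2\varrho}\setminus\Omega$ is what lets us apply the boundary Euler–Lagrange inequality at wall contact points lying strictly above $\varphi$: one needs $\varphi\le t_0-R$ on a small ball, and this follows from the strict gap plus continuity. In case b), the uniform exterior ball condition for $\mathfrak O$ gives these balls directly, including at the corner region. The resulting $s_0$ is independent of any scaling of $\psi$, as required for Remark \ref{rmkwitheps}.
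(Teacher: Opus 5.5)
\textbf{Case a).} Your argument for case a) follows the paper: Step 1 of Theorem \ref{asympt_obst}, then its Step 3 with the balls from Lemmas \ref{parab} and \ref{palla}. Your treatment of wall contacts through item (2) of Remark \ref{geom_obst_locmin_remark} is more explicit than what the paper writes.

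One step is missing, in your text and in the paper's. The balls you slide are tangent at $(x,\psi(x))$ with $x\in\Omega$, and when $d(x,\partial\Omega)<2\delta$ they stick out of the cylinder. But \eqref{paraboloid_cond} only bounds $\varphi$ by paraboloids based at points of $\partial\Omega$, so your discussion of a ball tangent at $(x_0,\psi(x_0))$ with $x_0\in\partial\Omega$ does not cover these balls.

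The fix is to take $x_0\in\partial\Omega$ nearest to $x$. For $y\in\Co\Omega$ one has $|x-x_0|\le|y-x|$ and $|y-x_0|\le 2|y-x|$. Taylor's formula on $[x,x_0]$ combined with \eqref{paraboloid_cond} then gives, with $M:=\|D^2\psi\|_{C^0(\overline\Omega)}$,
\[
\varphi(y)\le\psi(x)+\nabla\psi(x)\cdot(y-x)+\frac{4a+5M}{2}\,|y-x|^2\qquad\mbox{for } y\in B_{\varrho/2}(x)\setminus\Omega .
\]
So it suffices to apply Lemma \ref{palla} with coefficient $4a+5M$ and to require $4\delta\le\varrho$.

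\textbf{Case b).} Here your route is genuinely different. The paper reflects the obstacle into the bounded capsule $\Op(k)=\{\psi(x)<y<k+\|\psi\|_{L^\infty(\Omega)}-\psi(x)\}$. It checks that $\Op(k)$ is $C^2$ and not surrounded by $\Sg(\varphi)$, then reruns the proof of \cite[Theorem 1.7]{BucurLombardini} with Theorem \ref{positivecurvature} in place of \cite[Theorem 1.2]{BucurLombardini}.

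You instead perform Step 1 and push balls lying inside $\mathfrak O$ straight down. Note these are interior balls of $\mathfrak O$, not exterior ones as you wrote at the end. Your argument is correct and more elementary, for three reasons:
\begin{itemize}
\item $\mathfrak O$ is invariant under upward translations inside the cylinder.
\item A ball of radius $\delta\le r_0(\mathfrak O)$ inside $\mathfrak O$ meets $\partial\mathfrak O$ only at its tangency point. Hence its projection is compactly contained in $\Omega$, and every translate before the final one is compactly contained in the open set $\mathfrak O$.
\item Consequently no ball ever leaves $\Omega^\infty$, and no wall or obstacle contact can occur before the end. Only item (1) of Remark \ref{geom_obst_locmin_remark} is needed, and your paraboloid and continuity discussion is irrelevant in this case.
\end{itemize}

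The paper's route trades this for recycling a bounded-domain result. It needs no Step 1 and no line-by-line sliding, but it must construct $\Op(k)$ and rely on the sliding machinery of \cite{BucurLombardini}.

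Finally, your last sentence (independence from rescaling $\psi$) is not part of the statement. In case b) it does not follow from your argument: you need $\delta\le r_0(\mathfrak O)$, and $r_0$ of $\{(x,t)\in\Omega^\infty\,|\,t>\eps\psi(x)\}$ can degenerate as $\eps\to0$.
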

	
	\begin{proof}In order the prove point a) of this theorem, we define 
		\[ 
		k_0:=2+\max\{ \|\varphi\|_{L^\infty(\Omega_1\setminus \Omega)} , \|\psi\|_{L^\infty(\Omega)}\}	
		\] 
		and
		\[ b=\max\left\{ a, \frac{\|D^2\psi\|_{C^0(\overline \Omega)}}2\right\} \qquad 
		\delta:=\min\bigg\{r_0(\Omega),r_0(b), \frac14\bigg\}.
		\]
		
		We proceed as in the \emph{Step 1} of Theorem \ref{asympt_obst} and ``empty'' the strip $\overline\Omega \times [k_0-3\delta,+\infty).$  After that, by using Lemma \ref{parab} and \ref{palla}, we go on as in \emph{Step 3} of the above mentioned Theorem \ref{asympt_obst} and prove that $\Sg(u_s)$ is empty inside $\mathfrak O$. 
		For point b), we take 
		\[ 
		\delta:=\min\bigg\{r_0(\mathfrak O), \frac14\bigg\}.
		\]
		We then ``reflect'' the obstacle, in order to obtain a $C^2$ set. We define for any $k\geq k_0$ the set 
		\[ \Op(k) := \{ (x,y)\in \R^{n+1} \, \big| \, \psi(x)<y<k+\|\psi\|_{L^\infty(\Omega)}-\psi(x)\}.\] Notice that $\Op(k)$ is bounded and has $C^2$ boundary, thanks to the hypothesis. Furthermore, since $k\geq k_0$, by definition of $k_0$ we have that the exterior data $\Sg(\varphi)$ does not surround $\Op(k)$. We can argue as in the proof of  Theorem 1.7 in \cite{BucurLombardini}, by using Theorem \ref{positivecurvature} instead of \cite[ Theorem 1.2]{BucurLombardini}.  
	\end{proof}

	\begin{remark}\label{rmkwitheps}
		Notice that if instead of $\psi$ we consider the rescaled function $\eps \psi$, with $\eps\in [0,1]$ fixed,  the stickiness still happens. Thus, the stickiness is not conditioned by how large the obstacle is, in contrast to the classical case. Moreover, the threshold fractional parameter $s_0>0$ does not depend on $\eps$, but only on the exterior data (in particular of the function $\alpha$ of the exterior data) and on the "original" obstacle function $\psi$. 
	\end{remark}
	On the other hand, when
	$$\underline{\alpha}\big(\Sg(\varphi)\big)>\frac{\omega_{n+1}}{2},$$
	the $s$-minimal functions $u_s$ completely detach from the obstacle $\psi$, meaning that the coincidence set is empty. 
	
	\begin{theorem}\label{opposite}
		Let $\Omega\subset \Rn$ be a bounded and connected open set with $C^2$ boundary. Let $\varphi\colon\Rn  \lra \R$ be such that
		$$\varphi\in L^\infty_{loc}(\Co\Omega)\quad\mbox{ and }\quad\underline{\alpha}\big(\Sg(\varphi)\big)
		>\frac{\omega_{n+1}}{2}.$$
		Let $A \subset \subset\Omega$ be open (eventually $A=\emptyset$ or $A=\Omega$). Let also
		$\psi\in L^\infty(A)$ (and $\psi\in C^2_{loc}(\Omega)$ when $A=\Omega)$). 
		Define
		$$k_0:=2+\max\{\|\psi\|_{L^\infty(A)},\|\varphi\|_{L^\infty(\Omega_2\setminus\Omega)}\}.$$
		For every $s\in(0,1)$ let $u_s\in\W_{\varphi,\psi}^s(\Omega,A)$ be the unique $s$-minimal function.
		Then, for every $k\ge k_0$ there exists $s_k\in(0,1)$ such that
		\eqlab{
			u_s\ge k\quad\mbox{ a.e. in }\Omega,
		}
		for every $s\in(0,s_k)$.
		In particular
		$$\lim_{s\to0}u_s(x)=\infty,\quad\mbox{ uniformly in }x\in\Omega.$$
	\end{theorem}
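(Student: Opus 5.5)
The plan is to mirror the proof of Theorem \ref{asympt_obst}, exchanging the roles of subgraph and complement. The key observation is that the complement of $\Sg(u_s)$ is, after the reflection $(x,t)\mapsto(x,-t)$, the subgraph of $-u_s$. Its exterior data is $\Co\Sg(\varphi)\setminus\Omega^\infty$. Since $\Omega^\infty$ contributes nothing at infinity, and by the additivity properties of $\alpha$ recalled in the proof of Theorem \ref{positivecurvature} (complement of a set: $\overline\alpha(\Co F)=\omega_{n+1}-\underline\alpha(F)$), we get
\[
\overline\alpha\big(\Co\Sg(\varphi)\setminus\Omega^\infty\big)\le \omega_{n+1}-\underline\alpha(\Sg(\varphi))<\frac{\omega_{n+1}}2 .
\]
Hence Theorem \ref{positivecurvature} applies to $F:=\Co\Sg(u_s)$ with $F_0=\Co\Sg(\varphi)\setminus\Omega^\infty$. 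It gives $\beta,\delta_s,s_k$ such that an exterior tangent ball to $F$ of radius at least $\delta_\sigma$ at $Q\in\partial F\cap(\overline\Omega\times[-k,k])$ forces $H_s[F](Q)\ge\beta/s>0$. Equivalently, any ball contained in $\Sg(u_s)_{int}$ and touching $\partial\Sg(u_s)$ there forces $H_s[\Sg(u_s)](Q)\le-\beta/s<0$.

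Next, fix $\delta\le\min\{r_0(\Omega),1/4\}$ and $s<\min\{s_{k+1},\sigma\}$ with $\delta_\sigma\le\delta$, and slide balls \emph{from below}. First, for $y\in(-k_0+\delta,-k_0+3\delta)$, start from $\ball_\delta(q,y)$ with $q\in\partial\Omega_\delta$. This ball lies in $\Sg(\varphi)_{int}\setminus\Omega^\infty$ because $\varphi\ge -k_0+2$ on $\Omega_2\setminus\Omega$. Move it horizontally towards a hypothetical point of $\overline{\Co\Sg(u_s)}$ in $\Omega\times(-k_0+\delta,-k_0+3\delta)$; the argument of Lemma A.1 in \cite{BucurLombardini} produces a first touching point. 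The Euler--Lagrange information then gives a contradiction with $H_s\le-\beta/s$:
\begin{itemize}
\item inside $\Omega^\infty\setminus\overline{\Sg(\psi,A)}$, $H_s=0$ in the viscosity sense, by Remark \ref{geom_obst_locmin_remark}(1);
\item on $\partial\Omega^\infty$, $H_s\ge0$ by Remark \ref{geom_obst_locmin_remark}(2) (or \cite{elsulbordo}), since $\varphi\le t_0-R$ fails and $\varphi\ge t_0+R$ nearby;
\item on the obstacle, $H_s\ge0$ always holds, by Remark \ref{geom_obst_locmin_remark}(3).
\end{itemize}
The touching point may lie on the obstacle graph, but this does no harm: every available inequality has the sign $H_s\ge0$, which contradicts negativity. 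This is actually easier than Theorem \ref{asympt_obst}, where the obstacle had to be avoided. Since $\Sg(u_s)$ is a subgraph, this yields $\overline\Omega\times(-\infty,-k_0+3\delta]\subset\Sg(u_s)_{int}$.

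Then slide balls vertically upward, as in Step 2 of Theorem \ref{asympt_obst}. Start from centers $(q,-k_0)$ with $q\in\Omega_{-\eps-\delta}$ and move up to height $k+\delta$. Any first contact inside $\overline\Omega\times[-k-1,k+1]$ again contradicts the sign of the curvature, whether the contact is interior or on the obstacle. This shows $\Omega_{-\eps}\times(-k_0,k)\subset\Sg(u_s)_{int}$ for all small $\eps$, i.e.\ $u_s\ge k$ a.e. in $\Omega$. No regularity of $A$ or $\psi$ is needed, because the balls may cross the obstacle freely (it lies inside the region being filled). The one issue is the viscosity meaning of $H_s\ge0$ at contact points on $\overline{\Sg(\psi,A)}$ for merely bounded $\psi$. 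There I would argue directly via minimality: adding to $\Sg(u_s)$ a small piece of the region near the contact point is an admissible competitor. The constraint $\Sg(\psi,A)\subset F$ is preserved under adding sets, so one-sided (supersolution) variations are always allowed, giving $H_s\ge0$ at points with an interior tangent ball. When $A=\Omega$, the hypothesis $\psi\in C^2_{loc}$ guarantees the same.

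The main obstacle I expect is the near-boundary part: justifying $H_s\ge0$ at touching points on $\partial\Omega\times\R$. This needs $\varphi$ to stay well above the contact height in a neighborhood, which is why $k_0$ uses $\|\varphi\|_{L^\infty(\Omega_2\setminus\Omega)}$. I would also check the complement identity for $\overline\alpha$ carefully through \cite[Proposition 2.3]{BucurLombardini}.
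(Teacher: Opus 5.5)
Your proposal is correct and follows the paper's route. The ``counterpart'' of Theorem \ref{positivecurvature} that the paper invokes is exactly what you get by applying that theorem to $\Co\Sg(u_s)$ using $\overline\alpha(\Co F)=\omega_{n+1}-\underline\alpha(F)$, and your ball-sliding from below (horizontally into a low strip, then vertically up to height $k$) mirrors Steps 1--2 of Theorem \ref{asympt_obst}, as the paper's sketch does. Two of your refinements go beyond the sketch: filling up to height $k$ rather than only the strip $[k_0-3\delta,k_0-\delta]$ is what the stated conclusion $u_s\ge k$ actually requires, and noting that adding sets never violates $\Sg(\psi,A)\subset F$ (so $\Sg(u_s)$ is a variational supersolution in all of $\Omega^\infty$, giving $H_s\ge0$ at any interior-tangent-ball contact as in \cite[Theorem 5.1]{nms}) cleanly handles contact with the obstacle without any regularity of $\psi$.
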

	\begin{proof}[Sketch of the proof]
		The proof is just an adaptation of the argument in \emph{Step 1} of Theorem \ref{asympt_obst}. We first notice that the counterpart of Theorem \ref{positivecurvature} for 
		$$\underline{\alpha}\big(\Sg(\varphi)\big)>\frac{\omega_{n+1}}{2},$$
		affirms that at any point on the boundary of the $\Sg(u_s)$, the fractional mean curvature is negative. Then, the idea is to start with a ``full'' ball. If $A\subset \subset \Omega$ (or $A=\emptyset$), then let 
		\[ 
		\delta:=\min\left\{ r_0(\Omega), \frac{ d(\partial A, \partial \Omega)}4 ,\frac{1}4\right\}.
		\]  
		Then take $\ball_\delta(x,y)$, with $x\in \partial \Omega_\delta$ and $y<\varphi(x)-2$.  We slide the ball horizontally, until we get into the position $\ball_\delta(z,y)$ with $z\in \partial \Omega_{-\delta}$ and then vertically up until $\ball_\delta(z,k_0-2\delta)$ and then again horizontally and vertically until we ``fill'' the strip $\Omega \times [k_0-3\delta, k_0-\delta]$. 
		
		On the other hand, if $A=\Omega$ and $\psi \in C^2_{loc}(\Omega)$, let then let
		\[ \delta:= \min\left\{ r_0(\Omega), \frac14\right\}.\] Then we start with $\ball_\delta(x,y)$, with $y\leq \psi(x)-2$, we slide the ball vertically until we reach $\ball_\delta(x,k_0-3\delta)$ and then horizontally and vertically until we ``fill'' the strip  $\Omega \times [k_0-3\delta, k_0-\delta]$.
		Given that $\Sg(u_s)$ is s subgraph, it means that $u_s\geq k$ for all $k\geq k_0$. 
	\end{proof}

	%%%%%%%%SECTION%%%%%%%%
	\appendix
	
	\section{Some geometric observations}\label{appendicite}
	
	\subsection{Tangent paraboloids}
	
	We make in this subsection some elementary remarks on tangent paraboloids,  which we insert for completeness.
	\begin{lemma}\label{parab}
		Let $\Op\subset\R^n$ be a bounded open set with $C^2$ boundary and let $\psi\in C^2(\overline{\Op})$.
		Then there exists $\varrho>0$ such that for every $x_0\in\overline{\Op}$ it holds
		\bgs{
			\psi(x)\le\psi(x_0)+\nabla\psi(x_0)\cdot(x-x_0)+\frac{\|D^2\psi\|_{C^0(\overline{\Op})}}{{2}}|x-x_0|^2,
			\quad\forall\,x\in\overline{\Op}\cap B_\varrho(x_0).
		}
	\end{lemma}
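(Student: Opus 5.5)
The plan is to use Taylor's formula with integral remainder along segments, which requires that the segment from $x_0$ to $x$ lies in $\overline{\Op}$. So the whole argument reduces to choosing $\varrho$ so that for every $x_0\in\overline{\Op}$ and every $x\in\overline{\Op}\cap B_\varrho(x_0)$ the segment $[x_0,x]$ stays in $\overline{\Op}$, or at least can be replaced by a path along which the estimate still holds. Once the segment is inside, write
\[
\psi(x)=\psi(x_0)+\nabla\psi(x_0)\cdot(x-x_0)+\int_0^1(1-t)\,D^2\psi\big(x_0+t(x-x_0)\big)(x-x_0)\cdot(x-x_0)\,dt .
\]
Bounding the integrand by $\|D^2\psi\|_{C^0(\overline{\Op})}|x-x_0|^2$ and using $\int_0^1(1-t)\,dt=\tfrac12$ gives the claim directly. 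Here $C^2(\overline{\Op})$ is understood as $C^2$ in $\Op$ with second derivatives extending continuously to the closure, so the formula is valid on closed segments contained in $\overline{\Op}$ by approximation.

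\textbf{Main obstacle: the segment condition.} A bounded $C^2$ domain need not be convex, so in general I would not try to keep segments inside $\overline{\Op}$. The cleanest route I would take first is to extend $\psi$. Since $\partial\Op$ is $C^2$ and compact, there is a $C^2$ extension $\tilde\psi$ of $\psi$ to $\Rn$, for instance via reflection across $\partial\Op$ in the tubular neighborhood $N_{2r_0}(\partial\Op)$ where $\bar d_{\Op}$ is $C^2$, combined with a cutoff.

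The difficulty is that this extension only controls $\|D^2\tilde\psi\|_{C^0}$ by a constant multiple of the $C^2$ norm of $\psi$. It does not give the sharp constant $\|D^2\psi\|_{C^0(\overline{\Op})}$. For the sharp constant I would instead argue locally.

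\textbf{Local argument.}
\begin{itemize}
\item For $x_0$ at distance at least $\varrho$ from $\partial\Op$, the ball $B_\varrho(x_0)$ lies in $\Op$ and is convex, so the segment argument applies.
\item Near the boundary, I would use $C^2$ regularity, namely the uniform interior and exterior ball condition of radius $r_0$, to show the following. For $\varrho\ll r_0$, if $x_0,x\in\overline{\Op}$ with $|x-x_0|<\varrho$, then the segment $[x_0,x]$ leaves $\overline{\Op}$ by at most $C|x-x_0|^2/r_0$, since the boundary is locally a $C^2$ graph.
\item I would then compare along the path obtained by projecting the segment onto $\overline{\Op}$, accepting a small error.
\end{itemize}

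This small error is exactly the weak point, and I expect it to be the main obstacle. Getting the constant exactly $\|D^2\psi\|_{C^0}/2$ with no loss may require noticing that the paper only uses this lemma with constants $a$ and $b$ that have slack, for instance $a=\|D^2\psi\|/4$ together with a radius $r_0(a)$. If an honest proof forces a worse constant, I would state the lemma with $\|D^2\tilde\psi\|_{C^0}$ in its place.
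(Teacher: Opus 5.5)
The step you leave open, projecting the segment onto $\overline{\Op}$ while ``accepting a small error'', cannot be closed. The same holds for the paper's one-line appeal to Taylor's formula, which tacitly assumes $[x_0,x]\subset\overline{\Op}$. Read $\|D^2\psi\|_{C^0(\overline{\Op})}$ as $\sup_{\overline{\Op}}\|D^2\psi\|_{\mathrm{op}}$ (the natural reading, for which $\frac12$ is the sharp Taylor constant). Under this reading the statement is false for non-convex $\Op$.

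\textbf{Counterexample.} In $\R^2$ take $\Op=\{1<|x|<2\}$, $x_0=(1,0)$ and
\[
g(x)=-\tfrac14(x_1^2-1)x_2^2-\tfrac{7}{48}x_2^4-\tfrac25(x_1-1)^2 .
\]
Then $g(x_0)=0$, $\nabla g(x_0)=0$ and $\partial_{22}g(x_0)=\partial_{12}g(x_0)=0$. Moreover $D^2g\le 0$ on $\overline{\Op}$. Indeed $\partial_{11}g=-\frac45-\frac12x_2^2$ and $\partial_{22}g=-\frac12(|x|^2-1)-\frac54x_2^2$. With $\rho=|x|^2-1\in[0,3]$ and $t=x_2^2$, one has $\det D^2g=\frac25\rho-\frac34\rho t+\frac{13}8t^2\ge0$. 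On the inner circle,
\[
g(\cos\theta,\sin\theta)=(1-\cos\theta)^2\Big[\tfrac{5}{48}(1+\cos\theta)^2-\tfrac25\Big]>0\qquad\text{for }0<|\theta|\text{ small}.
\]
Now let $\psi:=g+\frac M2|x-x_0|^2$ with $M\ge\frac12\max_{\overline{\Op}}\|D^2g\|_{\mathrm{op}}$. Then $-MI\le D^2\psi\le MI$ on $\overline{\Op}$ and $D^2\psi(x_0)e_2\cdot e_2=M$, so $\|D^2\psi\|_{C^0(\overline{\Op})}=M$. On the other hand,
\[
\psi(x)-\psi(x_0)-\nabla\psi(x_0)\cdot(x-x_0)-\tfrac M2|x-x_0|^2=g(x)>0
\]
at the points $x=(\cos\theta,\sin\theta)\in\partial\Op$, which are arbitrarily close to $x_0$. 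So no choice of $\varrho$ works. Your suspicion that an honest proof forces a worse constant is exactly right.

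\textbf{The extension route works with arbitrarily small loss.} What you undersell is the extension argument: you need no global bound on $D^2\tilde\psi$. Take any $C^2$ extension $\tilde\psi$ of $\psi$, for instance by Whitney's theorem or a higher-order reflection after flattening $\partial\Op$. A plain reflection through $\partial\Op$ only gives a Lipschitz function. Then $D^2\tilde\psi=D^2\psi$ on $\overline{\Op}$. By uniform continuity, for every $\eta>0$ there is $\varrho>0$ such that $\|D^2\tilde\psi(y)-D^2\tilde\psi(x_0)\|_{\mathrm{op}}\le\eta$ whenever $x_0\in\overline{\Op}$ and $|y-x_0|<\varrho$. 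If $x\in\overline{\Op}\cap B_\varrho(x_0)$, every point of $[x_0,x]$ satisfies this. Your integral Taylor formula applied to $\tilde\psi$ then gives the inequality with $\frac12\big(\|D^2\psi\|_{C^0(\overline{\Op})}+\eta\big)$ in place of $\frac12\|D^2\psi\|_{C^0(\overline{\Op})}$. If $\Op$ is convex, the sharp constant holds. This is the correct form of the lemma, and it suffices for its uses in the paper, where the constant only determines the radius supplied by Lemma~\ref{palla}.
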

	This Lemma follows easily by using the Taylor expansion for the function $\psi$.  Notice that the vertex of the paraboloid is not generally at $(x_0,\psi(x_0))$ (unless the first order term vanishes), so the Lemma asserts that the function $\psi$ lies locally beneath part of the "branch'' of the paraboloid. 
	\begin{lemma}\label{palla}
		Let $a,c \in \R$ with $a>0$ and $b,x_0\in \Rn$ and define 
		\[ P(x):=c+ b\cdot (x-x_0) + \frac{a}2 |x-x_0|^2.\]
		Then there exists $r_0=r_0(a)>0$ such that $ \Sg(P)$ has an exterior tangent ball of radius $r_0$ at every point $(x,P(x))\in \partial \Sg(P)$.    
	\end{lemma}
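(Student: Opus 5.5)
The plan is to find a ball lying above the graph of $P$ that touches it at the given point, with radius depending only on $a$. Fix a point $(\bar x, P(\bar x))$ on the graph. The upward unit normal to $\partial\Sg(P)$ there is
\[
\nu(\bar x)=\frac{(-\nabla P(\bar x),1)}{\sqrt{1+|\nabla P(\bar x)|^2}},\qquad \nabla P(\bar x)=b+a(\bar x-x_0).
\]
We take $r>0$ to be chosen and set $Z:=(\bar x,P(\bar x))+r\,\nu(\bar x)$. The claim to prove is
\[
\ball_r(Z)\cap\Sg(P)=\emptyset .
\]
Since $(\bar x,P(\bar x))\in\partial\ball_r(Z)$, this is exactly the statement that $\ball_r(Z)$ is an exterior tangent ball.

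First we reduce to a normalized case. The map $(x,t)\mapsto (x, t-c-b\cdot(x-x_0))$ is not an isometry, so it cannot be used directly. Instead we note that the graph of $P$ is a translated and rotated copy of the graph of $\frac a2|x|^2$. Completing the square gives
\[
P(x)=\frac a2\Big|x-x_0+\frac ba\Big|^2+c-\frac{|b|^2}{2a}.
\]
Hence, up to a translation in $\R^{n+1}$ (an isometry), we may assume $P(x)=\frac a2|x|^2$. Exterior tangent balls are preserved by translations, and the radius obtained will depend only on $a$. A dilation $(x,t)\mapsto a(x,t)$ further reduces to $a=1$, so the radius for general $a$ is $r_0(1)/a$.

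So it suffices to show that the paraboloid $t=\frac12|x|^2$ admits an exterior tangent ball of a uniform radius at each of its points. This is the standard fact that the epigraph $\{t\ge \frac12|x|^2\}$ has the uniform interior ball condition, which follows from a curvature bound. The principal curvatures of the graph at $x$ are $(1+|x|^2)^{-1/2}$ and $(1+|x|^2)^{-3/2}$, all bounded by $1$. A convex smooth hypersurface with all principal curvatures at most $\kappa$ is not by itself enough globally, so we verify the claim directly.

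We show that $r=\frac12$ works, or we take $r=1$ and check. Let $Z=(\bar x,\tfrac12|\bar x|^2)+r\nu$. We must prove that for every $y\in\R^n$ and every $t<\frac12|y|^2$ we have $|(y,t)-Z|\ge r$. It is enough to check points $(y,\tfrac12|y|^2)$ on the graph, since $Z$ lies above the graph and the segment from $Z$ to any point of $\Sg(P)$ crosses the graph. This gives a reduction to the one-variable inequality
\[
f(y):=|(y,\tfrac12|y|^2)-Z|^2\ge r^2 \quad\text{for all } y.
\]
Writing $y=\bar x+h$ and using $\frac12|y|^2=\frac12|\bar x|^2+\bar x\cdot h+\frac12|h|^2$, let $\sigma=\sqrt{1+|\bar x|^2}$. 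A direct expansion gives
\[
f(\bar x+h)=r^2+|h|^2+(\bar x\cdot h)^2+\Big(\bar x\cdot h+\tfrac12|h|^2\Big)|h|^2+\tfrac14|h|^4-\frac{r}{\sigma}|h|^2+(\text{linear terms that cancel by the choice of }\nu).
\]
Equivalently, $f(\bar x+h)-r^2=|h|^2\big(1-\frac r\sigma\big)+\big(\bar x\cdot h+\frac12|h|^2\big)^2$. Since $\sigma\ge1$, the choice $r\le 1$ makes this nonnegative. Thus $r_0(1)=1$, and in general $r_0(a)=1/a$.

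The only delicate step is carrying out this algebraic expansion correctly, in particular confirming that the cross terms cancel exactly.
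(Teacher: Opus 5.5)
Your argument is correct, and it takes a different and more complete route than the paper.

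The paper puts $P$ in vertex form $P(x)=\frac a2|x-x_v|^2+y_v$. It then verifies only that the osculating ball $\ball_{1/a}(x_v,y_v+1/a)$ at the vertex lies in the epigraph, which reduces to $2-w\ge 2\sqrt{1-w}$ with $w=a^2|x-x_v|^2\in[0,1]$. After that it simply asserts that ``sliding'' this ball along the branches gives a tangent ball of the same radius at every point. No isometry of $\R^{n+1}$ maps the paraboloid to itself while moving the vertex, so that step really needs an argument, e.g.\ a rolling-ball theorem.

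You instead normalize by a translation and the dilation by $a$, and place the ball along the upward normal at an arbitrary point. You then establish the exact identity
\[
\Big|\big(\bar x+h,\tfrac12|\bar x+h|^2\big)-Z\Big|^2-r^2=|h|^2\Big(1-\frac r\sigma\Big)+\Big(\bar x\cdot h+\frac12|h|^2\Big)^2 .
\]
Together with your segment argument, this treats every boundary point at once and gives the same radius $r_0(a)=1/a$. At $\bar x=0$ it reduces to the paper's vertex computation, and in your normalized variables it even allows any $r\le\sigma$ at $\bar x$.

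Three small blemishes:
\begin{itemize}
\item Your first expanded display of $f(\bar x+h)$ carries a spurious $\frac12|h|^4$: the term $(\bar x\cdot h+\frac12|h|^2)|h|^2$ should be $(\bar x\cdot h)|h|^2$. Your ``equivalently'' identity is the correct one.
\item The segment argument uses that $Z$ lies strictly above the graph, which you should state and check. It is immediate, since $\frac12|\bar x|^2(1-r/\sigma)^2\le\frac12|\bar x|^2<\frac12|\bar x|^2+r/\sigma$ for $0<r\le 1$.
\item Only a translation, not a rotation, is needed in the normalization.
\end{itemize}
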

	
	\begin{proof}
		Essentially what we want to do is to consider the osculating ball of the paraboloid at the vertex of the paraboloid. Sliding this ball along the "branches'', we still end up with an exterior tangent ball (of uniform radius) at every point on the boundary of the paraboloid.
		
		Let us consider the paraboloid $P(x)$, that we rewrite as
		\[ P(x)= \frac{A}2 |x|^2 + B\cdot x+ C, \quad \mbox{ where } A=a, \, B= b-ax_0,\, C=\frac{a}2|x_o|^2 -b\cdot x_0 +c,\]
		of vertex 
		\[ x_v= -\frac{B}{A} , \,\, y_v:=P(x_v)= -\frac{1}{2A} |B|^2 +C \]
		and the sphere $\partial \mathcal B_{\frac{1}{A}} (x_v, y_v+\frac{1}{A}) \subset \R^{n+1}$, which we claim to be the osculating ball. 
		We prove that the " inferior half-sphere'', i.e.
		\[ y=y_v+\frac{1}{A} -\sqrt{\frac{1}{A^2} -|x-x_v|^2}, \quad \mbox{ with } |x-x_v|\leq \frac{1}{A} \]
		lays above the paraboloid $P(x)$.
		Some algebraic manipulation show that for any $x\in B_{\frac{1}{A}}(x_v)\subset \Rn$ the inequality
		\[ y_v+\frac{1}{A} -\sqrt{\frac{1}{A^2} -|x-x_v|^2 }\geq \frac{A}2 |x|^2 + B\cdot x+ C\]
		is equivalent to having
		\[  2-|Ax+B|^2 \geq \sqrt {4-4|Ax+B|^2},\]
		which trivially holds. This shows the "inferior half-ball'', thus the whole ball of radius ${1}/{A}$ tangent to the paraboloid at the vertex of the paraboloid, lays inside the paraboloid itself (hence it is an exterior tangent ball to the subgraph of $P$). 
		This proves the Lemma. 
	\end{proof}

	\subsection{Smooth domes over cylinders}
	In this section, we indicate an alternative proof for the results in this paper. As a matter of fact,  we know from the proof of \cite[Theorem 1.4]{BucurLombardini} that if $\Omega$ has $C^2$ boundary, one can get a ball of uniform radius and can slide it inside the domain (much as what we did in the proof of the main result), in any direction. In our proof of Theorem \ref{asympt_obst}, we had to enter the domain horizontally and then move vertically, exactly because of the lack of smooth boundary (indeed, we worked in the presence of the corners of $\Omega^k$). So, if one could obtain a $C^2$ approximation of the cylinder $\Omega^k$, then the technicality of the proof of Theorem \ref{asympt_obst} can be overcome, and one could apply directly \cite[Theorem 1.4]{BucurLombardini} to get the main result in Theorem \ref{asympt_obst}.
	
	So, in this Appendix we will give a constructive approach to obtain a $C^2$ domain from a cylinder.
	\begin{remark}\label{r0}
		Recall that a bounded open set $\Op\subset\R^n$ has $C^k$ boundary, for some $k\ge2$
		if and only if the signed distance function $\bar{d}_\Op$ is $C^k$ in a tubular neighborhood of $\partial\Op$,
		that is,
		\eqlab{
			\bar{d}_\Op\in C^k\big(N_{3r_0}(\partial\Op)\big),
		}
		for some $r_0>0$. We denote such $r_0$ by $r_0(\Op)$. 
	\end{remark}

	\begin{prop}
		Let $\Op\subset\R^n$ be a bounded open set with $C^k$ boundary, for some $k\ge2$.
		Let
		\eqlab{
			\mathfrak b_\Op(x):=
			\big(-\bar{d}_\Op(x)\big)^\frac{1}{k+1},\quad\forall\, x\in\overline{\Op}\setminus\Op_{-3r_0},
		}
		and let $\eta\in C^\infty_c(\R^n)$ be a cut-off function such that
		\bgs{
			0\le\eta\le1,\quad\eta\equiv1\mbox{ in }\Op_{-2r_0}\quad\mbox{ and }
			\quad\mbox{supp }\eta\subset\Op_{-r_0}.
		}
		Given a function $u\in C^k(\Op)$, let
		\[\mathfrak t_u^+(x):=\eta(x)u(x)+\big(1-\eta(x)\big)\mathfrak b_\Op(x),\quad\forall\,x\in\overline{\Op},\]
		and
		define the ``upward dome'' of base $\Op$ and top $u$ as
		\eqlab{
			\mathcal D^+(\Op,u):=\left\{(x,t)\in\R^{n+1}\,|\,x\in\Op\mbox{ and }-\big(\|u\|_{L^\infty(\Op_{-r_0})}+1\big)
			< t<\mathfrak t_u^+(x)\right\},
		}
		Then
		\[\mathfrak t^+_u\in C^0(\overline{\Op})\cap
		C^k(\Op),\]
		and
		\[\partial\mathcal D^+(\Op,u)\mbox{ is of class $C^k$ in }
		\left\{x_{n+1}>-\big(\|u\|_{L^\infty(\Op_{-r_0})}+1\big)\right\}.\]
	\end{prop}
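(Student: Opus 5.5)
The statement has two parts. The regularity of $\mathfrak t^+_u$ splits into interior regularity, which is a matter of smoothness of the pieces, and continuity up to $\partial\Op$. The regularity of $\partial\mathcal D^+$ splits into two regions: away from $\partial\Op\times\R$ the boundary is a graph, and near $\partial\Op\times\R$ it is a vertical wall of class $C^k$. The only delicate point is the junction between the top graph and the vertical walls. Throughout, I read $\mathfrak b_\Op$ as $(-\bar d_\Op)^{1/(k+1)}$, which is positive inside $\Op$ since $\bar d_\Op<0$ there.

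\emph{Regularity of $\mathfrak t^+_u$.}
\begin{itemize}
\item On $\Op_{-2r_0}$ we have $\eta\equiv1$, so $\mathfrak t^+_u=u\in C^k$ there.
\item On $\Op\setminus\Op_{-3r_0}$ the function $-\bar d_\Op$ is $C^k$ and strictly positive in the interior. The map $\tau\mapsto\tau^{1/(k+1)}$ is smooth on $(0,\infty)$, so $\mathfrak b_\Op\in C^k(\Op\setminus\overline{\Op_{-3r_0}})$.
\item Since $\eta$ is smooth and these open sets cover $\Op$, we get $\mathfrak t^+_u\in C^k(\Op)$.
\item Near $\partial\Op$ we have $\eta=0$, so $\mathfrak t^+_u=\mathfrak b_\Op$. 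This extends continuously by $0$ to $\partial\Op$ because $\bar d_\Op\to0$, which gives $\mathfrak t^+_u\in C^0(\overline\Op)$.
\end{itemize}

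\emph{Regularity of $\partial\mathcal D^+$ above the bottom level.} Let $h:=-(\|u\|_{L^\infty(\Op_{-r_0})}+1)$. Over $\Op_{-r_0}$ one has $\mathfrak t^+_u\ge\min\{u,\mathfrak b_\Op\}>h$, so the bottom face $t=h$ is irrelevant. Inside $\Op\times(h,\infty)$ the boundary is the graph of a $C^k$ function, hence $C^k$. It remains to study points near $\partial\Op\times[0,\infty)$, where $\mathcal D^+\cap\{t>h\}$ locally coincides with
\[
\{(x,t)\,|\, d(x)>0,\ t<d(x)^{1/(k+1)}\},\qquad d:=-\bar d_\Op.
\]
For $t>0$ the condition $t<d^{1/(k+1)}$ is equivalent to $d(x)>t^{k+1}$. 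Near a point with $t_0<0$ the boundary is simply the wall $\{d=0\}$, which is $C^k$ with gradient of unit length. So the boundary near $\partial\Op\times[0,\infty)$ is the zero set of
\[
F(x,t):=d(x)-(t_+)^{k+1}.
\]
The function $t\mapsto(t_+)^{k+1}$ is $C^k$ on all of $\R$, since its first $k$ derivatives vanish at $0$. Hence $F\in C^k$ in a neighbourhood of $\partial\Op\times\R$. Moreover $\nabla_xF=\nabla d$ has $|\nabla d|=1$ there, so $\nabla F\neq0$. By the implicit function theorem, $\{F=0\}$ is a $C^k$ hypersurface, and $\mathcal D^+=\{F>0\}$ locally. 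This includes the corner points $(x_0,0)$ with $x_0\in\partial\Op$: the wall $\{d=0,\ t<0\}$ and the top $\{t=d^{1/(k+1)}\}$ glue into one $C^k$ level set.

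\emph{Main obstacle.} The hard step is precisely this gluing at $t=0$. The exponent $1/(k+1)$ is chosen exactly so that the inverse function $(t_+)^{k+1}$ has $k$ continuous derivatives. I would also check that the region $\Op_{-2r_0}\setminus\Op_{-r_0}$, where $\eta$ interpolates, stays inside the graph region. This holds because $\mathfrak b_\Op\ge r_0^{1/(k+1)}>0$ there, so this region lies away from the walls.
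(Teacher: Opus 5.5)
Your proof is correct and follows the paper's argument essentially verbatim. The paper also treats the graph of $\mathfrak t^+_u$ over $\Op$ and the wall $\partial\Op\times(-\|u\|_{L^\infty(\Op_{-r_0})}-1,0)$ separately. It handles the junction $\partial\Op\times\{0\}$ by writing the dome near $\partial\Op\times(-1,1)$ as the sublevel set $\{F<0\}$ of $F(x,t)=\bar d_\Op(x)+(t_+)^{k+1}$, which is your $F$ up to sign, noting $F\in C^k$ with $\nabla F\neq 0$, and applying the implicit function theorem.

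One small slip in your final remark: the set where $\eta$ interpolates is $\Op_{-r_0}\setminus\Op_{-2r_0}$, since the set you wrote, $\Op_{-2r_0}\setminus\Op_{-r_0}$, is empty. That region stays inside the graph part simply because it lies at distance at least $r_0$ from $\partial\Op$, not because of the lower bound on $\mathfrak b_\Op$.
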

	
	\begin{proof}
		First of all,
		notice that
		\[\mathfrak t^+_u(x)=0\qquad\forall\,x\in\partial\Op\]
		and
		\[\partial\mathcal D^+(\Op,u)\cap
		\left\{x_{n+1}>-\|u\|_{L^\infty(\Op_{-r_0})}-1\right\}=D_1\cup D_2\cup\big(\partial\Op\times\{0\}\big),\]
		where
		\[D_1:=\left\{\big(x,\mathfrak t^+_u(x)\big)\in\R^{n+1}\,|\,x\in\Op\right\}\quad\mbox{ and }\quad
		D_2:=\partial\Op\times\big(-\|u\|_{L^\infty(\Op_{-r_0})}-1,0\big).\]
		We remark that both $D_1$ and $D_2$ are of class $C^k$.
		Hence, we only need to prove that $\partial\mathcal D^+(\Op,u)$ is of class $C^k$ in a neighborhood of
		$\partial\Op\times\{0\}$, that is where the two pieces $D_1$ and $D_2$ connect.
		
		In order to do this, let
		\[\mathcal V:=N_\delta(\partial\Op)\times(-1,1)\subset\R^{n+1},\]
		for some fixed $\delta>0$ small enough, say $\delta:=\min\{r_0,1\}/2$, and define the function
		\begin{equation*}
		F:\mathcal V\lra\R,\qquad F(x,t):=
		\left\{\begin{array}{cc}
		\bar{d}_\Op(x) & \mbox{if }t\le0,\\
		\bar{d}_\Op(x)+t^{k+1} & \mbox{if }t>0
		\end{array}\right.
		=\bar{d}_\Op(x)+(t_+)^{k+1},
		\end{equation*}
		where
		\[t_+:=\max\{0,t\}.\]
		Notice that the function
		\[t\in\R\longmapsto (t_+)^{k+1}\]
		is in $C^k(\R)$. Therefore $F\in C^k(\mathcal V)$.
		
		Moreover, recall that $|\nabla\bar{d}_\Op|=1$ whenever $\bar{d}_\Op$ is differentiable.
		Thus, since
		\bgs{
			\nabla F(x,t)=\left(\nabla\bar{d}_\Op(x),(k+1)(t_+)^k\right),
		}
		we have
		\eqlab{\label{Dini_ok}
			\nabla F(x,t)\not=0,\qquad\forall\,(x,t)\in \mathcal V.
		}
		
		Then we claim that
		\eqlab{\label{loc_def1}
			\mathcal D^+(\Op,u)\cap\mathcal V=\left\{(x,t)\in\mathcal V\,|\,F(x,t)<0\right\}
		}
		and
		\eqlab{\label{loc_def2}
			\partial\mathcal D^+(\Op,u)\cap\mathcal V=\left\{(x,t)\in\mathcal V\,|\,F(x,t)=0\right\}.
		}
		Indeed, since $\delta$ is small, we have
		\bgs{
			\mathcal D^+(\Op,u)\cap\mathcal V&=\left\{(x,t)\in\R^{n+1}\,|\,x\in N_\delta(\partial\Op)\cap\Op
			\mbox{ and }-1<t<\mathfrak b_\Op(x)\right\}\\
			&
			=\left\{(x,t)\in\R^{n+1}\,|\,-\delta<\bar{d}_\Op(x)<0
			\mbox{ and }-1<t<\big(-\bar{d}_\Op(x)\big)^\frac{1}{k+1}\right\}\\
			&
			=\left\{(x,t)\in\mathcal V\,|\,\bar{d}_\Op(x)<0
			\mbox{ and }t<\big(-\bar{d}_\Op(x)\big)^\frac{1}{k+1}\right\}.
		}
		Now let $(x,t)\in\mathcal V$. If $t\le0$, then
		\bgs{
			(x,t)\in\mathcal D^+(\Op,u)\quad\Longleftrightarrow\quad
			\bar{d}_\Op(x)<0\quad\Longleftrightarrow\quad
			F(x,t)<0.
		}
		On the other hand, if $t>0$, then
		\bgs{
			(x,t)\in\mathcal D^+(\Op,u)\quad\Longrightarrow\quad
			t^{k+1}<-\bar{d}_\Op(x)
			\quad\Longrightarrow\quad
			F(x,t)<0,
		}
		and
		\bgs{
			F(x,t)<0
			\quad\Longrightarrow\quad
			\bar{d}_\Op(x)<-t^{k+1}<0
			\quad\Longrightarrow\quad
			(x,t)\in\mathcal D^+(\Op,u).
		}
		This proves \eqref{loc_def1}
		and \eqref{loc_def2}
		is proved similarly.
		\\		
		Therefore, thanks to \eqref{Dini_ok}, we can conclude the proof of the Proposition
		by using Dini's Implicit Function Theorem.
	\end{proof}

	\begin{remark}
		We point out that the corresponding result holds true for the ``downward dome''
		\bgs{
			\mathcal D^-(\mathcal O,u)&:=\left\{(x,t)\in\R^{n+1}\,|\,x\in\mathcal O \mbox{ and }\mathfrak t_u^-(x)
			<t<\|u\|_{L^\infty(\mathcal O_{-r_0})}+1
			\right\}\\
			&
			=-\mathcal D^+(\mathcal O,-u),
		}
		where
		\[\mathfrak t_u^-(x):=\eta(x)u(x)-\big(1-\eta(x)\big)\mathfrak b_\Op(x)
		=-\mathfrak t^+_{-u}(x),\quad\forall\,x\in\overline{\Op}.\]
	\end{remark}

	\bibliography{biblio}
	\bibliographystyle{plain}

\end{document}